\documentclass[12pt]{amsart}
\usepackage{upkg}

\title{Castelnuovo--Mumford regularity of skew-symmetric matrix Schubert varieties}
\author{Jack Chen-An Chou}
\address{Jack Chen-An Chou, School of Mathematics, University of Minnesota, Minneapolis, MN 55455, USA.}
\email{chou0188@umn.edu}

\begin{document}

\maketitle

\begin{abstract}
    Skew-symmetric matrix Schubert varieties are determinantal varieties obtained by intersecting matrix Schubert varieties with the space of skew-symmetric matrices. They are closely related to the orbit closures of the symplectic group action on the flag variety, and their torus-equivariant K-classes are the symplectic Grothendieck polynomials. We compute the Castelnuovo--Mumford regularity of skew-symmetric matrix Schubert varieties by giving a combinatorial formula for the degree of symplectic Grothendieck polynomials. In addition, we characterize the highest-degree homogeneous component of a symplectic Grothendieck polynomial and compute the maximal Castelnuovo--Mumford regularity of skew-symmetric matrix Schubert varieties.
\end{abstract}

\section{Introduction}
\label{s: intro}
The flag variety $\operatorname{Fl}_{n} = GL_n/B$ admits its classical stratification into Schubert cells and Schubert varieties, indexed by permutations. The corresponding cohomology and $K$-theory classes are represented by Schubert and Grothendieck polynomials. The Schubert varieties and their associated polynomials are central objects in the study of Schubert calculus. When $n$ is even, the symplectic group $\operatorname{Sp}_{n}$ also acts on $\operatorname{Fl}_{n}$ with finitely many orbits, indexed by fixed-point-free involutions. Polynomial representatives for the classes of these orbit closures were introduced by Wyser and Yong~\cite{Wyser13,WY17}, with further geometry and combinatorics developed by Hamaker, Marberg, and Pawlowski~\cite{HMPInvWordsI,MP20Ktheory,MP22Grobner,HMP22PD}.

Matrix Schubert varieties~\cite{Ful} and their skew-symmetric versions~\cite{WY17,HMP22PD,MP22Grobner} are generalized determinantal varieties that are closely related to orbit closures in $\operatorname{Fl}_{n}$. Since these varieties are Cohen--Macaulay~\cite{Ful,KM,Ram,MP22Grobner}, their Castelnuovo--Mumford regularity, a commutative-algebraic invariant that measures their algebro-geometric complexity, can be computed from the degree of their K-polynomials. The K-polynomial of the matrix Schubert variety $X_w$ is the Grothendieck polynomial $\fG_w(\x)$~\cite{KM}, and its degree has a combinatorial formula given by the Rajchgot index~\cite{psw24}. On the other hand, the K-polynomial of the skew-symmetric matrix Schubert variety $X^\mathrm{SS}_z$ is the symplectic Grothendieck polynomial $\fpfG_z(\x)$~\cite{MP22Grobner}. We compute the degree of $\fpfG_z(\x)$ by introducing a new
statistic on fixed-point-free involutions called the symplectic Rajchgot index $\sraj(\cdot)$ (see Definition~\ref{def:snow-sraj}).
\begin{thm}
\label{t: main_degree}
    Let $z$ be a fixed-point-free involution, then
    \begin{align*}
        \deg \fpfG_z (\x) = 2 \cdot \sraj(z).
    \end{align*}
\end{thm}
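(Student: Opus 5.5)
We prove the formula by induction, modelled on the Pechenik--Speyer--Weigandt proof that $\deg \fG_w = \raj(w)$. The recursive structure of symplectic Grothendieck polynomials does the algebraic work, and the combinatorics of $\sraj$ supplies matching local moves.

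\textbf{Step 1: the recursion.} Symplectic Grothendieck polynomials satisfy a divided-difference (Demazure-type) recursion along the twisted action on fixed-point-free involutions. If $i$ is a symplectic descent of $z$ with $z(i)\neq i+1$, set $s_izs_i$ as below; then
\[
\fpfG_{s_izs_i}(\x)=\overline{\partial}_i\,\fpfG_z(\x), \qquad \overline{\partial}_i f=\partial_i\bigl((1-x_{i+1})f\bigr)\ \text{(up to the paper's sign conventions)}.
\]
The base case is the dominant (``top'') fixed-point-free involution, for which $\fpfG_z$ is an explicit product of factors $(x_i+x_j-x_ix_j)$. Its degree is twice the number of factors, so $\deg=2\cdot\sraj$ there can be checked directly from Definition~\ref{def:snow-sraj}. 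We may also use the Gr\"obner/pipe-dream formula of Marberg--Pawlowski, which writes $\fpfG_z$ as a signed sum over symplectic (K-)pipe dreams with monomials $\prod (x_i+x_j-x_ix_j)$-type weights. Then $\deg\fpfG_z$ is the largest possible number of crosses times the appropriate weight, provided no cancellation occurs in top degree.

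\textbf{Step 2: upper bound.} Using the pipe-dream or bumpless-type model, we show every symplectic K-pipe dream for $z$ contributes monomials of degree at most $2\sraj(z)$. The factor $2$ is natural: the K-class of $X^{\mathrm{SS}}_z$ is supported on the skew part, and each ``snow'' diagram box of Definition~\ref{def:snow-sraj} contributes a symmetric pair. Concretely, we prove an injection from crosses, counted with their degree contribution, into the boxes counted by $\sraj(z)$, by mirroring the Rajchgot-diagram argument: the snow diagram records, row by row, the longest increasing subsequence-type data in the lower triangle.

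\textbf{Step 3: lower bound and the main obstacle.} We exhibit a distinguished pipe dream, obtained by greedily filling snow boxes, whose leading monomial attains degree $2\sraj(z)$. We then show that this monomial's coefficient is nonzero, for instance because its sign-coefficient is $\pm1$ and it is produced by a unique pipe dream, or via a term order in which it is extremal. Alternatively, we run the induction of Step 1, using that $\overline{\partial}_i$ drops degree by exactly $0$ or $1$ in a controlled way. We then verify that $\sraj(s_izs_i)$ changes by the same amount, comparing how the snow diagram changes when the two columns and rows $i,i+1$ are swapped.

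The hard part is this nonvanishing and exactness of degree change. Signed sums in K-theory could cancel in top degree. We would first try to identify the top-degree component explicitly as a single product of linear forms times a Schur-type positive polynomial, e.g.\ a Schur $P$/$Q$ function in suitable variables. Positivity of that component then guarantees no cancellation.
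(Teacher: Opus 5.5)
Your proposal has a genuine gap: Steps~2 and~3 are placeholders, and they never supply the argument that actually determines the degree.

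\textbf{The problem you single out as hard is not hard.} In the Marberg--Pawlowski expansion $\fpfG_z=\sum_{D\in\mathcal D(z)}(-1)^{|D|-\ell_{\fpf}(z)}\prod_{(i,j)\in D}(x_i+x_j-x_ix_j)$, each summand has top-degree part $(-1)^{\ell_{\fpf}(z)}\prod_{(i,j)\in D}x_ix_j$. All top-degree contributions therefore have the same sign, nothing cancels, and $\deg\fpfG_z=2\max_{D\in\mathcal D(z)}|D|$.

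\textbf{The real content is left unproved.} The whole theorem is the extremal statement $\max_{D\in\mathcal D(z)}|D|=\sraj(z)$. For it you give neither the promised injection from pipe-dream cells into snow cells nor a definition of the ``greedy'' pipe dream attaining $\sraj(z)$. It is not even clear how these would be set up, since the rows of $\MD(z)$ and of the snow diagram are indexed by cycles of $z$, not by matrix rows.

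\textbf{The inductive alternative is wrong as stated.} $\deg\fpfG_z$ is always even, so $\overline{\partial_i}$ cannot drop it by exactly $1$. For instance, $\fpfG_{(1,4)(2,3)}=(x_1+x_2-x_1x_2)(x_1+x_3-x_1x_3)$ has degree $4$, while $\fpfG_{(1,3)(2,4)}=\overline{\partial_1}\fpfG_{(1,4)(2,3)}=x_1+x_2-x_1x_2$ has degree $2$. In general one only has $\deg\overline{\partial_i}f\le\deg f$, with no control on the size of the drop. You also give no rule for how $\sraj$ transforms under $z\mapsto s_izs_i$.

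\textbf{The missing idea.} You need a concrete mechanism that identifies the extremal object and tracks $\sraj$ along a recursion. The paper gets this as follows.
\begin{enumerate}
\item It expands $\fpfG_z=\sum_{w\in\mathcal B_{\mathrm{FPF}}(z)}(-1)^{\ell(w)-\ell_{\fpf}(z)}\fG_w$ and uses $\deg\fG_w=\raj(w)$. This reduces the problem to maximizing $\raj$ over the inverse Hecke atoms $\PP(z)$.
\item These atoms are generated by carrier insertion along the match-code recursion $z=(a,y)$, which removes the first cycle, rather than along $z\mapsto s_izs_i$.
\item Tight insertion, together with a coordinatewise comparison of increasing-subsequence lengths, shows that $\omega(\code(z))$ is the \emph{unique} maximizer of $\raj$ in $\PP(z)$. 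This uniqueness is what rules out top-degree cancellation among the $\fG_w$.
\item Identifying the drop set of $\omega(\code(z))$ with the complement of the cloud columns shows that $\raj(\omega(\code(z)))$ and $2\sraj(z)$ both increase by $2(2m-2-\rho_w(a))$ at each step, where $w=\omega(\code(y))$.
\end{enumerate}
Without an argument of comparable specificity, either in this form or as a genuine proof that $\max_{D\in\mathcal D(z)}|D|=\sraj(z)$, the proposal does not establish the theorem.
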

Our formula for the symplectic Rajchgot index is inspired by~\cite{py24}. Key combinatorial innovations of this paper include new notions of inversion code and Rothe diagram for fixed-point-free involutions called match code and match diagram (see Section~\ref{s: match code}) that induce natural recursions on various permutation statistics.

We prove Theorem~\ref{t: main_degree} by establishing a more precise statement. We can recursively construct a permutation from the match code of $z$ (see Definition~\ref{def:omega-recursion}) whose Grothendieck polynomial captures the highest-degree homogeneous component of $\fpfG_z(\x)$.
\begin{thm}
\label{t: main_top}
    Let $z$ be a fixed-point-free involution and let $w = \omega(\code(z))^{-1}$. Let $\widehat{\fG}_w(\x)$ and $\widehat{\mathfrak{G}}^{\fpf}_z(\x)$ denote the highest-degree homogeneous components of $\fG_w(\x)$ and $\fpfG_z(\x)$, respectively. Then
    \begin{align*}
       \widehat{\mathfrak{G}}^{\fpf}_z(\x) = (-1)^{\ell(w) - \ell_{\fpf}(z)}\widehat{\fG}_w(\x).
    \end{align*}
\end{thm}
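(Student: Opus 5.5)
The plan is to induct on the match code of $z$, using a recursion for $\fpfG_z(\x)$ that matches the recursion defining $\omega(\code(z))$ (Definition~\ref{def:omega-recursion}), and the known top-degree behaviour of ordinary Grothendieck polynomials from~\cite{psw24,py24}.

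\textbf{Recursion for $\fpfG_z$.} I would use the symplectic divided difference recursion of~\cite{MP20Ktheory}: for a descent $i$ of $z$ with $z(i)\neq i+1$, one has $\fpfG_{s_i z s_i}=\overline{\partial}_i\fpfG_z$, where $\overline{\partial}_i f=\partial_i\bigl((1-x_{i+1})f\bigr)$ is the $K$-theoretic divided difference (with the paper's sign convention). The base case is the dominant fixed-point-free involution $z_0$. There $\fpfG_{z_0}$ is an explicit product of factors $x_i+x_j-x_ix_j$ (or its signed version), so its top component is $\pm\prod x_ix_j$ over the cells of the fpf diagram. I would also check that $\omega(\code(z_0))^{-1}$ is dominant with Rothe diagram equal to the lower half of the match diagram. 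This makes its Grothendieck polynomial a monomial times a sign, and the identity then holds with the sign $(-1)^{\ell(w)-\ell_{\fpf}(z)}$, since $\widehat{\fG}_w$ of a dominant $w$ has degree $\ell(w)$.

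\textbf{Inductive step.} The step proceeds in three parts.
\begin{enumerate}
\item Show that the move $z\mapsto s_izs_i$ changes the match code in the same way that $\omega$ responds to a simple transposition. That is, $\omega(\code(s_izs_i))^{-1}=w s_i$ or $w$, according to a case split on the match code entries $c_i,c_{i+1}$.
\item Pass to top components. The top component of $\overline{\partial}_i f$ equals $-\partial_i(x_{i+1}\widehat f)$ if that expression is nonzero, and $\partial_i\widehat f$ otherwise. This is the mechanism used in~\cite{py24} to track the Rajchgot index.
\item Apply the same operator identity to $\widehat{\fG}_w$ and $\widehat{\fG}_{ws_i}$.
\end{enumerate}
Together these transport the equality from $z$ to $s_izs_i$. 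The sign change $(-1)$ accounts for the degree jump exactly when $\ell(w)-\ell_\fpf(z)$ increases by one.

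\textbf{Main obstacle.} The hard part is the case analysis. It must show that the \emph{same} branch (degree-raising or not) occurs on both sides, i.e.\ $-\partial_i(x_{i+1}\widehat{\fpfG}_z)\neq0$ iff $-\partial_i(x_{i+1}\widehat{\fG}_w)\neq0$. Once the inductive hypothesis is in place, this is automatic up to sign, because the two top components are proportional. What remains, and is genuinely hard, is proving that this branch choice matches the combinatorial definition of $\omega$, so that the resulting permutation is $\omega(\code(s_izs_i))^{-1}$. I would attack it by characterizing the branch via~\cite{py24}: $\widehat{\fG}_{w}$ is symmetric in $x_i,x_{i+1}$ iff a Rajchgot-type condition holds at $i$. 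I would then translate this condition into the match code entries through the match diagram of Section~\ref{s: match code}.
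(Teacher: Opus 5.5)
Your plan has a genuine gap at the steps where the degree drops. First, $\overline{\partial_i}$ never raises degree, so the two branches are "degree preserved" and "degree drops". Suppose $f$ has top component $\widehat f$ in degree $d$ and degree-$(d-1)$ component $f_{d-1}$. The degree-$d$ part of $\overline{\partial_i}f$ is $-\partial_i(x_{i+1}\widehat f)$. When this vanishes, the degree-$(d-1)$ part is $\partial_i\widehat f-\partial_i(x_{i+1}f_{d-1})$, not $\partial_i\widehat f$. This depends on the subleading component, about which your inductive hypothesis says nothing. Moreover, $\fpfG_z$ and $\pm\fG_w$ have different subleading components, because the other Hecke atoms contribute there. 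This branch cannot be avoided: along any chain of moves from the longest element, degrees must fall from $2m(m-1)$ to $0$ at $(1,2)(3,4)\cdots$.

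Here is a concrete failure. Take $z=(1,4)(2,3)$, so $\omega(\code(z))=\omega(2,0)=2413$ and $w=3142$. This $w$ is not dominant, since $\fG_{3142}=x_1^2(x_2+x_3-x_2x_3)$, so your base-case argument also fails, although the base identity itself is true. For $i=1$, both $x_2\widehat{\mathfrak{G}}^{\fpf}_z=x_1^2x_2^2x_3$ and $x_2\widehat{\fG}_w$ are symmetric in $x_1,x_2$. So both sides fall in the same branch, as you predicted. Yet $\overline{\partial_1}\fpfG_z=x_1+x_2-x_1x_2$ has degree $2$, whereas $\overline{\partial_1}\fG_{3142}=\fG_{1342}=x_1x_2+x_1x_3+x_2x_3-2x_1x_2x_3$ has degree $3$. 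Your formula $\partial_1\widehat f$ would predict a nonzero degree-$3$ term on both sides, with coefficient $\pm1$. Correspondingly, your step 1 is false: $\omega(\code(s_1zs_1))^{-1}=1324$, which is neither $w$ nor $ws_1=1342$.

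What is missing is control of cancellation among Hecke atoms. Here $\fpfG_z=\fG_{1342}+\fG_{3124}-\fG_{3142}$, and $\overline{\partial_1}$ sends this to $\fG_{1342}+\fG_{1324}-\fG_{1342}$. So the surviving top term of $\fpfG_{s_1zs_1}$ comes from a non-maximal atom of $z$, after the image of the maximal atom cancels. Any repair would have to carry the full expansion of Theorem~\ref{t: symplectic to Gro expansion} through the induction, and once you have that expansion, divided differences are unnecessary. The paper argues this way. Using tight insertion and the $\mathcal J$-minimization (Proposition~\ref{p:unique-raj-maximizer}), it shows that $\omega(\code(z))^{-1}$ is the \emph{unique} element of $\mathcal B_{\mathrm{FPF}}(z)$ of maximal Rajchgot index. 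Since $\deg\fG_v=\raj(v)$, its top component cannot cancel against any other term, and the theorem follows immediately.
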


An immediate corollary of Theorem~\ref{t: main_degree} is a combinatorial formula for the Castelnuovo--Mumford regularity of skew-symmetric matrix Schubert varieties.
\begin{cor}
\label{cor: Cm-reg}
Let $z$ be a fixed-point-free involution. Then 
\begin{align*}
    \reg(X^\mathrm{SS}_z) = \sraj(z) - \ell_{\fpf}(z).
\end{align*}
\end{cor}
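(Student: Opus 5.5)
The corollary follows from Theorem~\ref{t: main_degree} together with the standard relation between Castelnuovo--Mumford regularity and the degree of the K-polynomial for Cohen--Macaulay quotients. We work in the polynomial ring $S$ on the strictly lower-triangular entries of an $n\times n$ skew-symmetric matrix. Write $I_z$ for the defining ideal of $X^{\mathrm{SS}}_z$, so that $\mathcal{K}(S/I_z;\x)=\fpfG_z(\x)$ by~\cite{MP22Grobner}.

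First, we pass to the standard $\mathbb{Z}$-grading. The torus acts on the coordinate $x_{ij}$ (with $i>j$) with weight $x_i+x_j$, a sum of two characters. Specializing $x_i\mapsto 1-t$, or equivalently taking the coarsening in which every variable has degree one, the ordinary K-polynomial $K(S/I_z;t)$ is obtained from $\fpfG_z$ by substituting $1-x_i-x_j+\cdots$ appropriately. The cleanest route is the following: each entry has total $\x$-degree $2$, so the coarse K-polynomial $K(t)$ in $t=e^{-\deg}$ satisfies $\deg_t K(t)=\tfrac12\deg_{\x}\fpfG_z(\x)$. This holds provided the top homogeneous component does not cancel under the specialization.

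This non-cancellation is the step I expect to be the main obstacle. I would handle it using Theorem~\ref{t: main_top}: the top component equals $\pm\widehat{\fG}_w(\x)$. Grothendieck polynomials have sign-alternating coefficients in each degree, so $\widehat{\fG}_w$ has all coefficients of one sign. Hence its evaluation at any positive specialization is nonzero, and the top degree survives. Combining this with Theorem~\ref{t: main_degree}, the coarse K-polynomial has $t$-degree $\sraj(z)$.

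Second, we use Cohen--Macaulayness. By~\cite{MP22Grobner}, $S/I_z$ is Cohen--Macaulay of codimension $\ell_{\fpf}(z)$. For a Cohen--Macaulay graded quotient $S/I$ of codimension $c$ in a standard-graded polynomial ring, the minimal free resolution has length $c$. Moreover, the Hilbert numerator $K(t)=\sum_i(-1)^i\sum_j\beta_{ij}t^j$ has degree $\max_j\{j:\beta_{cj}\neq0\}$, since the last syzygy module carries the top shifts without cancellation. Therefore
\[
\reg(S/I)=\max_{i,j}(j-i)=\deg K(t)-c.
\]
Applying this with $\deg K(t)=\sraj(z)$ and $c=\ell_{\fpf}(z)$ gives $\reg(X^{\mathrm{SS}}_z)=\sraj(z)-\ell_{\fpf}(z)$.
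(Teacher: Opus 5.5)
Your proof is correct and has the same skeleton as the paper's. You use the Cohen--Macaulay identity $\reg = \deg_t K^{\mathrm{std}}_z - \ell_{\fpf}(z)$ (Lemma~\ref{lem:CM-regularity-K-polynomial}, which your last paragraph re-derives), the halving $\deg_t K^{\mathrm{std}}_z = \tfrac12\deg_{\x}\fpfG_z$, and Theorem~\ref{t: main_degree}.

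The genuine difference is the non-cancellation step. The paper proves Lemma~\ref{lem:standard-torus-degree-factor} from the Marberg--Pawlowski expansion over extended fpf-involution pipe dreams (Theorem~\ref{t:MP-K-polynomial-expansion}), whose top component is visibly one-signed. You instead get a one-signed top component from Theorem~\ref{t: main_top} together with sign-alternation of $\fG_w$. Your route dispenses with pipe dreams but leans on the main result of Section~\ref{s: paired_words and tight insertion}. In fact Theorem~\ref{t: main_top} is more than you need: Theorem~\ref{t: symplectic to Gro expansion} plus sign-alternation already shows that every degree-$k$ coefficient of $\fpfG_z$ has sign $(-1)^{k-\ell_{\fpf}(z)}$. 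You are also right that the diagonal specialization is the only place cancellation can occur. The substitution $a_i\mapsto 1-x_i$ is an invertible affine change of variables and automatically preserves total degree, so the first of the paper's two non-cancellation checks is not actually needed.

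Two small repairs are needed.
\begin{itemize}
\item The substitution should be $x_i\mapsto 1-s$ with $t=s^2$, i.e.\ $\fpfG_z(1-s,\ldots,1-s)=K^{\mathrm{std}}_z(s^2)$. Substituting $x_i\mapsto 1-t$ produces $K^{\mathrm{std}}_z(t^2)$, not the coarse $K$-polynomial itself, although your factor $\tfrac12$ comes out right.
\item In the Cohen--Macaulay step, ``the last syzygy module carries the top shifts without cancellation'' needs justification. Dualizing the minimal resolution gives a minimal resolution of $\operatorname{Ext}^c_S(S/I,S)$. Hence the maximal shifts $t_i=\max\{j:\beta_{ij}\neq0\}$ satisfy $t_0<t_1<\cdots<t_c$, which yields both $\deg K=t_c$ and $\max_i(t_i-i)=t_c-c$. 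Alternatively, simply cite Lemma~\ref{lem:CM-regularity-K-polynomial}.
\end{itemize}
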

To the author's knowledge, the best result previously was an upper bound for the regularity in the special case of vexillary skew-symmetric matrix Schubert varieties by Pechenik and St.~Denis~\cite{PS25}. Using Corollary~\ref{cor: Cm-reg}, we also determine the maximal Castelnuovo--Mumford regularity of skew-symmetric matrix Schubert varieties indexed by $\ifpf_{2m}$.
\begin{thm}
\label{t: maximize fpf reg}
For $m \in \ZZ_{>0}$, define $k$ by $\binom{k}{2} \leq m < \binom{k+1}{2}$. Then
    \begin{align*}
        \max_{z \in \ifpf_{2m}} \reg(X^{\mathrm{SS}}_z) =  2 \cdot \max_{w \in S_m} \reg(X_w) = m(m+1) - 2km + 2 \binom{k+1}{3}.
    \end{align*}
\end{thm}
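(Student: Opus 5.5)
We prove the two equalities separately. The second one, $\max_{w\in S_m}\reg(X_w) = \tfrac12 m(m+1) - km + \binom{k+1}{3}$ with $\binom{k}{2}\le m<\binom{k+1}{2}$, is the known maximal regularity of matrix Schubert varieties. It follows from the Rajchgot-index formula $\reg(X_w)=\raj(w)-\ell(w)$ of~\cite{psw24}, together with the maximization of $\raj(w)-\ell(w)$ over $S_m$ due to Pechenik--Speyer--Weigandt; we simply quote it. So the real content is
\[
\max_{z\in\ifpf_{2m}} \bigl(\sraj(z)-\ell_{\fpf}(z)\bigr) = 2\max_{w\in S_m}\bigl(\raj(w)-\ell(w)\bigr),
\]
which reduces, via Corollary~\ref{cor: Cm-reg}, to a purely combinatorial comparison of statistics.

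\textbf{Lower bound.} We embed $S_m$ into $\ifpf_{2m}$ by $w\mapsto z_w$, where $z_w$ matches $i$ with $m+w(i)$ for $i\le m$. The match diagram of $z_w$ should be a disjoint union of two copies of the Rothe diagram of $w$ (or of $w$ and $w^{-1}$), roughly one in the upper-left block and one reflected. From this we expect $\ell_{\fpf}(z_w)=2\ell(w)+c$ and $\sraj(z_w)=2\raj(w)+c$ for the same constant $c$, which gives $\reg(X^{\mathrm{SS}}_{z_w})=2\reg(X_w)$. If the constants do not match exactly for this embedding, we instead choose the pairing $i\leftrightarrow 2m+1-w(i)$. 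Alternatively, we use Theorem~\ref{t: main_top}: we compute $\omega(\code(z_w))$ and verify that the resulting permutation has Grothendieck degree $2\deg\fG_w$.

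\textbf{Upper bound.} This is the main obstacle. We plan to show, for every $z\in\ifpf_{2m}$, that $\sraj(z)-\ell_{\fpf}(z)\le 2\max_{S_m}(\raj-\ell)$. First, we use Theorem~\ref{t: main_top} to write $\sraj(z)-\ell_{\fpf}(z) = \tfrac12\deg\fG_w - \ell_{\fpf}(z)$ with $w=\omega(\code(z))^{-1}$, and hence express the regularity as a statistic on the match code $\code(z)$. The match code of $z\in\ifpf_{2m}$ is a sequence of length $m$ with entry bounds like $0\le c_i\le 2(m-i)$. Then we bound this statistic as in the Pechenik--Speyer--Weigandt argument. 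Their maximizers are the permutations whose Lehmer-code-like structure is a staircase of decreasing runs of lengths about $1,2,\dots,k$; we expect the maximizers of the symplectic statistic to correspond to codes that are \emph{doubled} versions of these. Concretely, we would show that the symplectic Rajchgot statistic computed from the match code, which is the $\snow$-type count of Definition~\ref{def:snow-sraj}, decomposes as the sum of two Rajchgot-type counts, each at most the type-A maximum. For this we would split the match diagram into the cells arising from the two halves of each matched pair. Failing a clean decomposition, we would run an exchange argument directly on codes: any code not of the doubled-staircase form can be modified so that the regularity does not decrease, and in the end we evaluate the regularity of the doubled-staircase code explicitly to get $m(m+1)-2km+2\binom{k+1}{3}$.
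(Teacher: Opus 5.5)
\textbf{There is a genuine gap, on both sides of the first equality.} Quoting Pechenik--Speyer--Weigandt for the second equality is fine; the problems are in the lower and upper bounds.

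\textbf{Lower bound.} Your construction fails: the embedding $z_w=\prod_{i\le m}(i,m+w(i))$ does not double regularity.

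Row $i$ of $\MD(z_w)$ consists of two kinds of cells: the cells $(i,j)$ with $i<j\le m$, and the cells $(i,m+w(j))$ with $j>i$ and $w(j)<w(i)$. So $\MD(z_w)$ is a staircase plus a \emph{single} shifted copy of $D(w)$, and $\ell_{\fpf}(z_w)=\binom{m}{2}+\ell(w)$, not $2\ell(w)+c$.

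Geometrically, the rank conditions do three things. They force the upper-left $m\times m$ block to vanish. They impose exactly the matrix Schubert conditions of $w$ on the off-diagonal block. They are vacuous on the rest. Hence $X^{\mathrm{SS}}_{z_w}\cong X_w\times\mathbb{C}^{\binom{m}{2}}$ and $\reg(X^{\mathrm{SS}}_{z_w})=\reg(X_w)$, which is only half of what you need.

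For example, $w=132$ gives $z_w=(1,4)(2,6)(3,5)$, with $\code(z_w)=(2,2,0)$, $\sraj(z_w)=5$ and $\ell_{\fpf}(z_w)=4$. Its regularity is $1$, whereas the maximum over $\ifpf_6$ is $2$, attained by $(1,2)(3,6)(4,5)$.

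Your fallback pairing $i\leftrightarrow 2m+1-w(i)$ is the same embedding applied to $w_0w$, so it fails the same way; for instance $(1,6)(2,4)(3,5)$ has regularity $0$. Computing $\omega(\code(z_w))$ cannot rescue a family whose regularity is exactly $\reg(X_w)$. The paper instead uses layered involutions with $\code(z_S)=\mathbf{d}(v_k)\cdots\mathbf{d}(v_1)$, where $v_i\in\{k-i,k-i+1\}$ and $\sum_i v_i=m$, and computes their regularity directly.

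\textbf{Upper bound.} You rightly call this the real content, but it is not argued. A hoped-for splitting of $\sraj$ into two type-A counts, or an unspecified exchange argument, gives no way to bound $\ell_{\fpf}(z)$ from below in terms of whatever controls $\sraj(z)$. That trade-off is the entire problem. The ingredients you are missing are these.
\begin{enumerate}
\item[(i)] Let $V=(h_1,\ldots,h_m)$ be the level sequence of the regular word $\omega(\code(z))$. Then $\mathcal J(\omega(\code(z)))=\sum_i(2h_i-1)$, and therefore $\sraj(z)=m(m+1)-\sum_i h_i$.
\item[(ii)] The lower bound $\ell_{\fpf}(z)\ge B(V)=\sum_h\bigl(2\binom{\nu_h(V)}{2}+\nu_h(V)\nu_{h+1}(V)\bigr)$. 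This is proved by induction on $z=(a,y)$, from the fact that every letter $r$ of $\omega(\code(y))$ with $L(r)\ge H-1$ lies in $[a]$. The bound is attained exactly by $z_V$.
\item[(iii)] The identity $\sum_i h_i+B(V)=\tfrac12\bigl(\mathcal F(p(V))+\mathcal F(q(V))\bigr)$, where $p_i=\nu_{2i}+\nu_{2i+1}$ and $q_i=\nu_{2i-1}+\nu_{2i}$. Completing the square then shows $\mathcal F(v)\ge 2km-2\binom{k+1}{3}$ whenever $\sum_i v_i=m$.
\end{enumerate}
Your ``two Rajchgot-type counts'' intuition loosely anticipates the $p/q$ averaging in (iii). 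Without (i) and (ii), however, there is nothing to average, so the upper bound remains unproved.
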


The paper is organized as follows. Section~\ref{s:background} reviews the necessary
background. Section~\ref{s: match code} introduces match codes and match diagrams.
Section~\ref{s: paired_words and tight insertion} constructs the permutation $\omega(\code(z))$ and proves Theorem~\ref{t: main_top}. Section~\ref{sec:snow-degree} introduces the symplectic Rajchgot index and proves Theorem~\ref{t: main_degree} and Corollary~\ref{cor: Cm-reg}. Finally, Section~\ref{s: max cm} determines the maximal regularity and proves Theorem~\ref{t: maximize fpf reg}.

\section*{Acknowledgements}
We thank Joshua Arroyo, Zachary Hamaker, Eric Marberg, Brendan Pawlowski, and Anna Weigandt for helpful and inspiring conversations. We are grateful to Zachary Hamaker, Linus Setiabrata, and Anna Weigandt for carefully reading an earlier draft and providing useful feedback. We thank Zachary Hamaker, Eric Marberg, and Brendan Pawlowski for suggesting the problem.

Many of the lemmas and proofs in Sections~\ref{s: paired_words and tight insertion},~\ref{sec:snow-degree}, and~\ref{s: max cm} were obtained through extended conversations with GPT-5.6 Sol. In particular, the model proposed the key constructions in Definitions~\ref{d: tight insertion},~\ref{def:omega-recursion},~\ref{d: zv}, and~\ref{d :zs}. In addition, the model was used for grammatical checks and general streamlining of the paper.

\section{Background}
\label{s:background}
\subsection*{Permutations}
For a positive integer $n$, write $[n]=\{1,2,\ldots,n\}$ and let $S_n$ denote the symmetric group, with simple transpositions $s_i=(i,i+1)$. We consider permutations in both one-line notation and cycle notation. Certain permutations of even size will also be written as sequences of increasing consecutive pairs, called \definition{paired words} (see Definition~\ref{def: paired_word}). For $w \in S_n$, its \definition{inversion code}, denoted by $\invcode(w)$, is a weak composition of length $n$ whose $i\textsuperscript{th}$ entry records the number of indices $j > i$ such that $w(j) < w(i)$. The sum of the entries is the \definition{Coxeter length} of $w$, denoted as $\ell(w)$. The \definition{Rothe diagram} $D(w)$ of a permutation $w \in S_n$ is defined to be
\[
D(w) = \{(i,j)\in[n]\times[n]\colon i < w^{-1}(j) \textup{ and } j < w(i)\}.
\]
It is not hard to see that the number of cells in row $i$ of $D(w)$ is equal to $\invcode(w)_i$, and that the total number of cells in $D(w)$ is $\ell(w)$. 

Let $\ifpf_{n}$ be the set of fixed-point-free involutions in $S_n$ and $\ifpf=\bigcup_{n\geq0}\ifpf_{n}$. For $z\in\ifpf_{2m}$, we write its cycles in canonical order as
\[
    z=(b_1,c_1)(b_2,c_2)\cdots(b_m,c_m),
    \qquad b_i<c_i,
    \qquad b_1<b_2<\cdots<b_m.
\]
Hamaker, Marberg, and Pawlowski~\cite{HMPInvWordsI} defined analogues of Rothe diagram and inversion code for fixed-point-free involutions. The \definition{symplectic Rothe diagram} of  $z \in \ifpf_{n}$ is
\[
D^{\mathrm{Sp}}(z) = \{(i,j)\in [n] \times [n] : i < j < z(i), i < z(j)\}.
\]
Equivalently, $D^{\mathrm{Sp}}(z)$ is the subset of $D(z)$ that is strictly above the diagonal. In particular, our convention is the transpose of the definition in~\cite{HMPInvWordsI}. Since \(z\) is an involution, the off-diagonal part of \(D(z)\) is the disjoint union of \(D^{\mathrm{Sp}}(z)\) and its transpose. The \definition{fpf-involution code} of $z$ is a weak composition of length $n$ whose $i\textsuperscript{th}$ entry is
\[
c_{\mathrm{FPF},i}(z) = |\{j > i : z(j) < i \text{ and }z(j) < z(i)\}|,
\]
which is the number of cells in the $i\textsuperscript{th}$ column of $D^{\mathrm{Sp}}(z)$. The \definition{fpf-Coxeter length} $\ell_{\mathrm{FPF}}(z)$ is the sum of the entries in $c_{\mathrm{FPF}}(z)$. The inversion codes and fpf-involution codes are invariant under embedding the permutations/fixed-point-free involutions into a larger $S_n$/$\ifpf_{n}$ modulo appending zeros at the end.

In Section~\ref{s: match code}, we will introduce alternate analogues of Rothe diagrams and inversion codes for fixed-point-free involutions called match codes and match diagrams. They are different from the symplectic Rothe diagrams and the fpf-involution codes in general.

\begin{rem}
    Throughout this paper, the variables $y,z$ will always denote fixed-point-free involutions, while $u,w$ will denote permutations or paired words.
\end{rem}

Consider a permutation $w \in S_n$ in one-line notation. For $r \in [n]$, let $L_w(r)$ be the length of a longest increasing subsequence of $w$ whose first letter is $r$. We define
\[
    \mathcal J(w):=\sum_{r=1}^{n}L_w(r)
\]
to be their sum and, for $t \in \ZZ$, define
\[
    \rho_w(t):=\max\bigl(\{L_w(r):r>t\}\cup\{0\}\bigr).
\]
Equivalently, let $w|_{>t}$ denote the subword of $w$ obtained by deleting every letter weakly smaller than $t$, then $\rho_w(t)$ is the length of a longest increasing subsequence of $w|_{>t}$. Notice that $L_{w^{-1}}(i) = L_w(w(i))$, so summing over $i \in [n]$ gives $\mathcal{J}(w) = \mathcal{J}(w^{-1})$.

Pechenik, Speyer, and Weigandt~\cite{psw24}, in their study of highest-degree homogeneous components of Grothendieck polynomials, defined the \definition{Rajchgot code} for each $w \in S_n$ as the weak composition $\rajcode(w) = (r_1, \dots, r_n)$ where 
\begin{align*}
    r_i := n - i + 1 - L_{w^{-1}}(i) = n-i +1 - L_w(w(i)).
\end{align*}
Equivalently, $r_i$ is the number of letters skipped by any longest increasing subsequence starting at position $i$ in the one-line notation of $w$. They further define the sum of its entries as the \definition{Rajchgot index} on permutations.
\begin{align*}
    \raj(w) := \sum_{i=1}^n r_i =  \binom{n+1}{2} - \mathcal{J}(w^{-1}) = \binom{n+1}{2} - \mathcal{J}(w).
\end{align*}

\subsection*{Grothendieck polynomials}
For $w \in S_n$, Lascoux and Sch\"utzenberger \cite{LS:Groth} introduced the Grothendieck polynomial $\fG_w(\x)$ as the polynomial representative of the structure-sheaf class of the corresponding Schubert variety in the Grothendieck ring of the complete flag variety. In general, the Grothendieck polynomials are not homogeneous.  The lowest-degree homogeneous components of the Grothendieck polynomials are the Schubert polynomials, which represent the cohomology classes of Schubert varieties in flag varieties.

Let $s_i$ act on $\ZZ[\x]$ by interchanging $x_i$ and $x_{i+1}$.  Define
\[
    \partial_i f=\frac{f-s_if}{x_i-x_{i+1}}
    \qquad\text{and}\qquad
    \overline{\partial_i} f
    =\partial_i\bigl((1 - x_{i+1})f\bigr).
\]
The set of \definition{Grothendieck polynomials} is the unique family of polynomials in $\ZZ[\x]$ satisfying
\[
\fG_w(\x) = \begin{cases}
x_1^{n-1}x_2^{n-2}\cdots x_{n-1} & \text{if } w = n(n-1)\cdots1, \\
\overline{\partial_i} \fG_{ws_i}(\x) & \text{if } w(i) < w(i+1).
\end{cases}
\]
They are stable under adjoining a final fixed point and have coefficients in $\ZZ$.

Pechenik, Speyer, and Weigandt showed that the degrees of Grothendieck polynomials are given by the $\raj(\cdot)$ statistic, and that $\raj(\cdot)$ is invariant under taking inverses. 
\begin{thm}~\cite{psw24}*{Theorem~1.1 and Corollary~4.5}
\label{t: psw raj}
    For $w \in S_n$, 
    \begin{align*}
        \deg \fG_w(\x) = \raj(w) = \raj(w^{-1}) =  \deg \fG_{w^{-1}}(\x).
    \end{align*}
\end{thm}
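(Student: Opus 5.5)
This theorem is quoted from \cite{psw24}, so the plan below is how I would reprove it from the facts recalled above. The steps are to establish $\deg \fG_w(\x) = \raj(w)$, to establish $\raj(w)=\raj(w^{-1})$, and then to combine them.

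\textbf{Step 1: $\raj(w)=\raj(w^{-1})$.} This follows immediately from the identity $\mathcal J(w)=\mathcal J(w^{-1})$ noted above. Indeed, $L_{w^{-1}}(i)=L_w(w(i))$, and summing over $i$ gives $\raj(w)=\binom{n+1}{2}-\mathcal J(w)=\binom{n+1}{2}-\mathcal J(w^{-1})=\raj(w^{-1})$. Once the degree formula is proved for every permutation, applying it to $w^{-1}$ gives the last equality.

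\textbf{Upper bound $\deg \fG_w \le \raj(w)$.} I would induct downward in weak order from $w_0=n(n-1)\cdots 1$. The base case holds because $\raj(w_0)=\binom{n}{2}$. For the inductive step, take $w(i)<w(i+1)$. Then
\[
\fG_w=\overline{\partial_i}\fG_{ws_i}=\partial_i\bigl((1-x_{i+1})\fG_{ws_i}\bigr),
\]
so applying $\overline{\partial_i}$ raises degree by at most $0$: multiplying by $1-x_{i+1}$ adds $1$ and $\partial_i$ subtracts $1$. This gives only $\deg\fG_w\le\deg\fG_{ws_i}$, which is too weak. The sharper tool is a pipe-dream / bumpless-pipe-dream model: $\fG_w$ is a signed sum over (non-reduced) pipe dreams, so the degree is the maximum number of crosses in a Hecke pipe dream for $w$. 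I would then show combinatorially that row $i$ contains at most $r_i$ crosses. The reason is that each cross in row $i$ beyond those forced by the inversion code must correspond to a letter skipped by an increasing subsequence starting at position $i$.

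\textbf{Lower bound and non-cancellation.} This is the step I expect to be the main obstacle. I would construct an explicit monomial $\x^{\rajcode(w)}$ and show that it appears in $\fG_w$ with coefficient $\pm1$; it suffices to show that exactly one Hecke pipe dream realizes it. Then the top-degree component is nonzero, since all pipe dreams with $\raj(w)$ crosses carry the same sign $(-1)^{\raj(w)-\ell(w)}$, so no cancellation can occur in top degree. To build the pipe dream, I would fill each row greedily from the right according to the longest increasing subsequence structure, in the spirit of the ``snow diagram'' construction. Uniqueness would follow by showing that any pipe dream achieving the maximum row counts is forced row by row. As an alternative, I would run an induction on $n$ that removes the letter $w(1)$ or $n$, tracks how $\rajcode$ changes, and uses the transition formula for Grothendieck polynomials to show that the leading monomial survives.
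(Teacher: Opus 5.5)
Your Step~1 is correct; it is exactly the observation the paper records after defining $\mathcal J$. Your reduction of $\deg\fG_w$ to $\max_{P\in\PD(w)}|P|$ is also valid, because in the pipe-dream formula the sign $(-1)^{|P|-\ell(w)}$ depends only on $|P|$, so nothing cancels within a fixed degree. (The paper gives no proof of this theorem; it imports it from \cite{psw24}.) After that reduction, however, the entire content of the theorem is the identity $\max_{P\in\PD(w)}|P|=\raj(w)$, and your proposal proves neither inequality.

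\textbf{The upper bound.} The mechanism you describe is false. A Hecke pipe dream for $w$ can have more than $r_i$ crosses in row $i$, even when $|P|$ is maximal. Take $w=1423$. Then $\rajcode(w)=(1,2,0,0)$, $\raj(w)=3$, and $\fG_{1423}=x_1^2+x_1x_2+x_2^2-x_1^2x_2-x_1x_2^2$. The top-degree monomial $x_1^2x_2$ comes from the pipe dream $\{(1,2),(1,3),(2,1)\}$. Its reading word is $s_3,s_2,s_2$, with Demazure product $s_3\circ s_2\circ s_2=s_3s_2=1423$. This pipe dream has $3=\raj(w)$ crosses, two of them in row $1$, although $r_1=1$. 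The term $x_1^2$ similarly shows that the inversion code does not control row counts.

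Theorem~\ref{t: rajcode distinguish} only says that $x^{\rajcode(w)}$ is the \emph{leading} monomial for term orders with $x_1<\cdots<x_n$. It does not say that $x^{\rajcode(w)}$ dominates every exponent componentwise. So any valid upper bound on $|P|$ has to be global rather than row-by-row. This is the real work in \cite{psw24}, and you have no argument for it.

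\textbf{The lower bound.} Sign coherence means you only need one $P\in\PD(w)$ with $|P|=\raj(w)$; uniqueness is not needed. But ``fill each row greedily from the right'' is not yet a construction. Producing such a pipe dream and checking that its Demazure product is $w$ is the content of \cite[Theorems~7.1 and~7.7]{psw24}; an explicit construction is in \cite{cy24}. Your alternative induction via the transition formula is likewise only named, not carried out.
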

In addition to their formula, the climbing chain models introduced by
Dreyer, Mészáros, and St. Dizier~\cite{DMS}, and the snow diagrams introduced by Pan and Yu~\cite{py24} also compute $\raj(\cdot)$ combinatorially. Our formula for $\sraj(\cdot)$ resembles that of Pan and Yu's snow diagrams (see Section~\ref{sec:snow-degree}).

The Rajchgot code is also closely related to the characterization of highest-degree homogeneous components of Grothendieck polynomials.
\begin{thm}~\cite{psw24}*{Theorem~1.1 and Theorem~1.4}
\label{t: rajcode distinguish}
    For $w \in S_n$, the leading term of $\widehat{\fG}_w(\x)$ for every term order satisfying $x_1 < \cdots < x_n$ is a scalar multiple of the monomial $x^{\rajcode(w)}$. Moreover, two highest-degree homogeneous components \(\widehat{\mathfrak G}_u(\x)\) and \(\widehat{\mathfrak G}_v(\x)\) agree up to a scalar if and only if $\rajcode(u)=\rajcode(v)$.
\end{thm}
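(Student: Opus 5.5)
The first claim, about the leading term, I would prove with the (non-reduced) pipe dream formula of Knutson--Miller, $\fG_w(\x)=\sum_{P}(-1)^{|P|-\ell(w)}\x^{\mathrm{wt}(P)}$. Here $P$ runs over pipe dreams whose Demazure product is $w$, and $\mathrm{wt}(P)_i$ counts the crosses in row $i$. The highest-degree component $\widehat{\fG}_w(\x)$ then collects the pipe dreams with the maximal number of crosses. The first step is to show that a maximal pipe dream cannot have more than $r_i$ crosses in row $i$. I would argue this by following a longest increasing subsequence of $w$ starting at position $i$. Its $L_w(w(i))$ letters force at least $L_w(w(i))$ elbows among the $n-i+1$ available boxes in row $i$ (up to the staircase boundary), since each of those letters must avoid crossing the pipe entering at row $i$. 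The second step is to build one explicit pipe dream $P_0$ with exactly $r_i$ crosses in row $i$ for every $i$, and to check that its Demazure product is $w$. I would construct $P_0$ by filling each row greedily from the left, and verify it inductively, removing the first row and passing to a smaller permutation. Taken together, these steps show that $\x^{\rajcode(w)}$ is the componentwise-maximal exponent, so it is the leading monomial for every term order with $x_1<\cdots<x_n$. It remains to check that its coefficient does not vanish. All maximal pipe dreams have the same number of crosses, hence the same sign, so no cancellation can occur and the coefficient is a nonzero integer. In particular $\deg\fG_w=\raj(w)$.

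For the ``if and only if'' statement, one direction is immediate from the first part. If $\widehat{\fG}_u$ and $\widehat{\fG}_v$ agree up to a scalar, they have the same leading monomial, so $\rajcode(u)=\rajcode(v)$.

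The converse is the real content, and I expect it to be the main obstacle. My plan is to attach to each Rajchgot code a canonical representative permutation $u_0$, for instance the unique permutation with that Rajchgot code whose inverse is ``fireworks'', meaning its decreasing runs have increasing initial letters. I would then show $\widehat{\fG}_w=\pm\widehat{\fG}_{u_0}$ for every $w$ with that code. The tool is a top-degree divided-difference calculus. Write $\overline{\partial_i}f=\partial_i f-\partial_i(x_{i+1}f)$. When the degree goes up, the top component of $\overline{\partial_i}\fG_{ws_i}$ is $-\partial_i(x_{i+1}\widehat{\fG}_{ws_i})$; when $\raj(w)=\raj(ws_i)$, it is instead $\partial_i\widehat{\fG}_{ws_i}$ plus a correction that I must show vanishes. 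I would then prove that two permutations with the same Rajchgot code are linked by a chain of moves, each of which either fixes the top component up to sign or intertwines with these operators. Concretely, I would induct on $n$, comparing the last letters and using $\rajcode(w)_n=0$, and reduce to the case where $w$ and $u_0$ differ by a transposition that does not change any $L_w(r)$. The delicate point is to show that such transpositions leave $\widehat{\fG}$ unchanged up to sign. For this I would try transition equations (Lascoux's $K$-theoretic transition) restricted to top degree, where every term except one should drop in degree.
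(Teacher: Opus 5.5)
The paper does not prove this statement; it imports it from \cite{psw24}. Judged on its own, your proposal has genuine gaps in both halves.

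\textbf{The leading-term claim.} Your first step, the row bound, is false, and with it the claim that $x^{\rajcode(w)}$ is the componentwise-maximal exponent. Take $w=1423=s_3s_2\in S_4$. The longest increasing subsequence from position $1$ is $1,2,3$, so $\rajcode(w)=(1,2,0,0)$, and $\fG_{1423}=x_1^2+x_1x_2+x_2^2-x_1^2x_2-x_1x_2^2$. The top-degree pipe dream $\{(1,2),(1,3),(2,1)\}$ has reading word $s_3s_2s_2$ and Demazure product $s_3s_2=w$. It has two crosses in row $1$, although $r_1=1$.

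Your elbow-counting argument never uses maximality, so it is refuted even earlier: the reduced pipe dream $\{(1,2),(1,3)\}$ also has two crosses in row $1$. Structurally, if your bound held, every top-degree pipe dream would have row weight exactly $\rajcode(w)$. Then $\widehat{\fG}_w$ would always be a single monomial and the hypothesis $x_1<\cdots<x_n$ would be superfluous. Here, however, $\widehat{\fG}_{1423}=-x_1^2x_2-x_1x_2^2$.

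What the statement actually encodes is a dominance condition: every exponent $\beta$ of $\widehat{\fG}_w$ satisfies $\sum_{j\ge i}\beta_j\le\sum_{j\ge i}r_j$ for all $i$. Proving this, together with the degree bound $\max_P|P|\le\raj(w)$ (which you derived only from the false row bound), needs an idea your proposal lacks.

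Your left-greedy construction of $P_0$ also fails on this example. A pipe dream with row weight $\rajcode(w)$ does exist (cf.\ the Appendix). For $1423$ it is $\{(1,3),(2,1),(2,2)\}$, whereas the left-justified $\{(1,1),(2,1),(2,2)\}$ has Demazure product $s_1s_3s_2=2413$.

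\textbf{The converse.} Your target $\widehat{\fG}_w=\pm\widehat{\fG}_{u_0}$ is false. The permutations $1243$ (the inverse-fireworks representative) and $1342$ both have Rajchgot code $(1,1,1,0)$. Yet $\widehat{\fG}_{1243}=x_1x_2x_3$ while $\widehat{\fG}_{1342}=-2x_1x_2x_3$. So no chain of moves that each preserve $\widehat{\fG}$ up to sign can connect them. Any argument must produce arbitrary nonzero integer scalars and show they never vanish.

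Your top-degree calculus is also backwards. Since $\deg\overline{\partial_i}f\le\deg f$, the degree-$\raj(ws_i)$ part of $\fG_w=\overline{\partial_i}\fG_{ws_i}$ is always $-\partial_i(x_{i+1}\widehat{\fG}_{ws_i})$. This equals $\widehat{\fG}_w$, with no correction, exactly when $\raj(w)=\raj(ws_i)$. When the degree drops, $\widehat{\fG}_w$ depends on lower homogeneous components of $\fG_{ws_i}$, not just on $\widehat{\fG}_{ws_i}$. The drop can exceed one: $1423=1432\cdot s_3$, but $\raj(1432)=5$ while $\raj(1423)=3$. So top components are not closed under these operators.

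Finally, the claim that all but one term of a $K$-theoretic transition equation drop in degree is only asserted. As written, the converse, which you rightly identify as the real content, has no working argument.
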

In particular, the highest-degree homogeneous component of a Grothendieck polynomial is completely determined, up to scalar multiplication, by its Rajchgot code.

\subsection*{Symplectic Grothendieck polynomials}
\label{s : symplectic grothendieck}
For even $n$, the symplectic group $\operatorname{Sp}_n$ acts on the complete flag variety $\operatorname{Fl}_n$ with finitely many orbits indexed by $\ifpf_n$. Wyser and Yong~\cite{WY17} constructed polynomial representatives for the cohomology and $K$-theory classes of the closures of these orbits. Marberg and Pawlowski~\cite{MP20Ktheory} further studied these $K$-theory representatives and characterized their stable versions, called the symplectic Grothendieck polynomials.

The set of \definition{symplectic Grothendieck polynomials} is the unique family of polynomials in $\ZZ[\x]$ satisfying
\begin{equation*}
\fpfG_z(\x) = 
\begin{dcases}
    \prod_{1 \leq i < j \leq n-i}(x_i + x_j - x_ix_j) & \text{if } z = n(n-1) \cdots 1, \\
    \overline{\partial_i} \fpfG_{s_i z s_i}(\x) & \text{if } i + 1 \neq z(i) < z(i+1) \neq i.
\end{dcases}
\end{equation*}
The lowest degree homogeneous part of $\fpfG_z(\x)$ is the \definition{symplectic Schubert polynomial} $\fpfS_z(\x)$, with degree equal to $\ell_{\fpf}(z)$.

\begin{rem}
    Grothendieck polynomials and symplectic Grothendieck polynomials often appear in the literature with an extra $\beta$ parameter. In this paper, we only consider the specialization $\beta = -1$.
\end{rem}

Marberg and Pawlowski defined a special class of fixed-point-free involutions for which the symplectic Grothendieck polynomials are relatively easy to compute. 
\begin{defn}
\label{d : sp-dominant}
We say that $z \in \ifpf_{n}$ is \definition{$\mathrm{Sp}$-dominant} if there exists a strict partition $ \mu = (\mu_1 > \mu_2 > \cdots > \mu_k > 0)$ such that
\begin{align*}
    D^{\mathrm{Sp}}(z) = \{(j, i + j)\in [k] \times [n] : 1  \leq i \leq \mu_j \}.
\end{align*}
\end{defn}
In this case, the symplectic Grothendieck polynomials can be computed directly from the symplectic Rothe diagrams.
\begin{thm}~\cite{MP20Ktheory}*{Theorem~3.8}
    Let $z \in \ifpf_{n}$ be $\mathrm{Sp}$-dominant, then
    \begin{align*}
        \fpfG_z(\x) = \prod_{(i,j) \in D^{\mathrm{Sp}}(z)} (x_i + x_j - x_ix_j).
    \end{align*}
\end{thm}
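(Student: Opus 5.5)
Since $z$ is $\mathrm{Sp}$-dominant, the $\mathrm{Sp}$-dominant stratum should be closed under the divided difference recursion from the longest element. My plan is therefore to induct on $\ell_{\fpf}(z)$ downward from the longest element $z_0 = n(n-1)\cdots 1$ within the class of $\mathrm{Sp}$-dominant involutions.

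\textbf{Base case.} For $z_0 = n(n-1)\cdots1$, the diagram $D^{\mathrm{Sp}}(z_0)$ is the staircase $\{(i,j): i<j\le n-i\}$. This is exactly the index set in the defining product of $\fpfG_{z_0}$, so the formula holds by definition.

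\textbf{Inductive step.} Take an $\mathrm{Sp}$-dominant $z \neq z_0$ with shifted shape $\mu$. I would pick a corner to ``add'': choose an index $i$ such that $y = s_i z s_i$ is again $\mathrm{Sp}$-dominant. Its shape $\mu'$ should be obtained from $\mu$ by adding a single box at the end of some row $r$, with $z(i) < z(i+1)$ and $z(i) \ne i+1$. Concretely, I would take $i = r+\mu_r$ for the largest admissible $r$, mirroring the classical dominant case. The first combinatorial check is that $D^{\mathrm{Sp}}(y) = D^{\mathrm{Sp}}(z)\cup\{(r,i+1)\}$, and that $D^{\mathrm{Sp}}(z)$ is symmetric in columns $i$ and $i+1$ apart from this one cell. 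I would verify this directly from the description $D^{\mathrm{Sp}} = \{(a,b): a<b<z(a),\ a<z(b)\}$.

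With that in hand, I would write
\[
\fpfG_y = (x_r + x_{i+1} - x_r x_{i+1})\, P,
\]
where $P$ is the product over the remaining cells. Then I would show that $P = Q\cdot R$, where $Q$ collects the cells in columns $i, i+1$ and is symmetric under $s_i$, and $R$ involves neither $x_i$ nor $x_{i+1}$. The cells in columns $i$ and $i+1$ come in pairs $(a,i),(a,i+1)$ with $a<r$, so their product is $s_i$-symmetric.

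It then remains to compute the one-cell operator identity
\[
\overline{\partial_i}\bigl((x_r+x_{i+1}-x_rx_{i+1})f\bigr) = (x_r+x_i-x_rx_i) f
\]
for $s_i$-symmetric $f$. Factoring out $f$, the expression becomes $(1-x_{i+1})(x_r+x_{i+1}-x_rx_{i+1}) = x_r + (1-x_r)x_{i+1} - x_{i+1}^2(1-x_r)+\dots$. This is precisely the standard computation $\overline{\partial_i}\bigl(x_{i+1}\text{-type factor}\bigr)$. Writing $1-(x_r+x_{i+1}-x_rx_{i+1}) = (1-x_r)(1-x_{i+1})$ should make it a two-line check. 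The result yields the product over $D^{\mathrm{Sp}}(z)$, where the moved cell $(r,i+1)$ disappears and $(r,i)$ is the one remaining.

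\textbf{Main obstacle.} I expect the bookkeeping to be the hard part. One must show that every $\mathrm{Sp}$-dominant $z\ne z_0$ admits a row $r$ where a box can be added, keeping the shape strict and inside the staircase. One must also show that the relation $y=s_izs_i$ holds with the required inequalities $i+1\ne z(i)<z(i+1)\ne i$. I would handle this by writing an explicit one-line description of the $\mathrm{Sp}$-dominant involution attached to $\mu$ and checking the effect of conjugating by $s_i$.
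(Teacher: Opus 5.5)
\paragraph{The gap: the index is off by one.} The paper quotes this result from Marberg--Pawlowski without proof, so your argument has to stand on its own. As written, the inductive step fails.

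Take any $i$ with $i+1\neq z(i)<z(i+1)$ and set $y=s_izs_i$. Comparing inversions gives $D(y)=(s_i\times s_i)\bigl(D(z)\bigr)\sqcup\{(i,z(i)),(z(i),i)\}$. Hence
\[
\prod_{(a,b)\in D^{\mathrm{Sp}}(y)}(x_a+x_b-x_ax_b)=(x_i+x_{z(i)}-x_ix_{z(i)})\cdot s_i\Bigl(\prod_{(a,b)\in D^{\mathrm{Sp}}(z)}(x_a+x_b-x_ax_b)\Bigr).
\]
So the new cell always has $i$ as a coordinate, and its factor contains $x_i$, never $x_{i+1}$. A new cell $(r,i+1)$ with $i=r+\mu_r$ cannot occur.

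Already for $z=(1,3)(2,4)$, $\mu=(1)$, $r=1$, your $i=2$ does not satisfy the recursion's hypothesis, since $z(2)=4>z(3)=1$. The valid steps there are $i=1$ and $i=3$.

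Your one-cell identity is also false. In fact
\[
\overline{\partial_i}(x_r+x_{i+1}-x_rx_{i+1})=x_r+(1-x_r)(x_i+x_{i+1}-1),
\]
so for $f=1$ the left side has constant term $-1$ while $x_r+x_i-x_rx_i$ has constant term $0$.

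Finally, the bookkeeping is inconsistent. The cell $(r,i)=(r,r+\mu_r)$ already lies in $D^{\mathrm{Sp}}(z)$ and has no partner $(r,i+1)$ in $P$. So your $Q$ is not $s_i$-symmetric, and the output you describe would contain the factor for $(r,i)$ twice.

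\paragraph{How to repair it.} The outline is repairable.
\begin{itemize}
\item Put the new box in column $c=r+\mu_r+1$ and use $i=c$. The correct identity is $\overline{\partial_c}\bigl((x_r+x_c-x_rx_c)f\bigr)=f$ for $s_c$-symmetric $f$, which holds because $\overline{\partial_c}(1)=\overline{\partial_c}(x_c)=1$.
\item Do not take the largest admissible row. For $\mu=(3,1)$ that is $r=2$, and row $1$ ends exactly in column $c=4$, so columns $4$ and $5$ are not paired.
\item Instead take $r$ minimal with $\mu_r<n-2r$, setting $\mu_r=0$ for $r>k$. Such an $r$ exists when $z\neq z_0$, since $z(1),\dots,z(a)$ are distinct values exceeding $a+\mu_a$, which forces $\mu_a\le n-2a$.
\end{itemize}

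With this choice:
\begin{itemize}
\item Rows $a<r$ are full, so $z(a)=n+1-a$. This forces $z(r)=c$, and these rows meet both columns $c$ and $c+1$.
\item Rows $r,\dots,k$ end before column $c$.
\item Rows $c$ and $c+1$ are empty because $c>k$.
\end{itemize}
Thus $D^{\mathrm{Sp}}(z)$ is $s_c$-stable and $z(c)=r<z(c+1)$. Moreover $D^{\mathrm{Sp}}(s_czs_c)=D^{\mathrm{Sp}}(z)\sqcup\{(r,c)\}$ is the strict shifted shape with a box added to row $r$, so $s_czs_c$ is $\mathrm{Sp}$-dominant, and the downward induction closes.
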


For each $z \in \ifpf_{n}$, Marberg and Pawlowski defined the set $\mathcal B_{\mathrm{FPF}}(z)$ of \definition{fixed-point-free Hecke atoms} and showed that it is the set of permutations that appear in the Grothendieck expansion of $\fpfG_z(\x)$.
\begin{thm}~\cite{MP20Ktheory}*{Theorem~3.12}
\label{t: symplectic to Gro expansion}
     For $z\in\ifpf_n$,
\[
    \fpfG_z(\x) = \sum_{w\in\mathcal B_{\mathrm{FPF}}(z)}
    (-1)^{\ell(w)-\ell_{\mathrm{FPF}}(z)}\G_w(\x).
\]
\end{thm}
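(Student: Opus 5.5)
The statement is Theorem~\ref{t: symplectic to Gro expansion}, quoted from~\cite{MP20Ktheory}, so we may simply cite it. If we wanted a self-contained argument, the plan would be to argue by downward induction on $\ell_{\fpf}(z)$ inside a fixed $\ifpf_n$. The induction would start at the reverse permutation $z=n(n-1)\cdots1$ and use the defining recursion $\fpfG_z(\x)=\overline{\partial_i}\fpfG_{s_izs_i}(\x)$ to reach every other $z$. We would use the description of $\mathcal B_{\mathrm{FPF}}(z)$ via the Demazure (0-Hecke) product. Let $1_{\fpf}=(1,2)(3,4)\cdots$. Then $\mathcal B_{\mathrm{FPF}}(z)$ should be the set of $w$ with $w^{-1}\circ 1_{\fpf}\circ w=z$ under the $0$-Hecke conjugation action. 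Its minimal-length elements are the ordinary fpf atoms, of length $\ell_{\fpf}(z)$, which accounts for the sign $(-1)^{\ell(w)-\ell_{\fpf}(z)}$.

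The key operator identity is the behaviour of $\overline{\partial_i}$ on Grothendieck polynomials at $\beta=-1$. If $w(i)>w(i+1)$, then $\overline{\partial_i}\fG_w=\fG_{ws_i}$. If $w(i)<w(i+1)$, then $\fG_w$ is symmetric in $x_i,x_{i+1}$, so $\overline{\partial_i}\fG_w=\fG_w\,\partial_i(1-x_{i+1})=\fG_w$.

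Applying $\overline{\partial_i}$ termwise to the inductive expansion of $\fpfG_{s_izs_i}$ then gives a signed sum of $\fG_{ws_i}$ over atoms with a descent at $i$, plus $\fG_w$ over atoms with an ascent at $i$. The combinatorial step is to show that this signed multiset collapses exactly to $\sum_{w\in\mathcal B_{\mathrm{FPF}}(z)}(-1)^{\ell(w)-\ell_{\fpf}(z)}\fG_w$. We would do this by a sign-reversing pairing. An atom $w$ of $s_izs_i$ with an ascent at $i$ should be matched with $ws_i$, which is again an atom of $s_izs_i$ but has a descent at $i$. Their contributions $\fG_w$ and $\fG_{(ws_i)s_i}=\fG_w$ carry opposite signs and cancel. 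The surviving terms $\fG_{ws_i}$ are then exactly the atoms of $z$. This uses the identity $(ws_i)^{-1}\circ1_{\fpf}\circ(ws_i)=s_i\circ(s_izs_i)\circ s_i=z$, valid because $z(i)<z(i+1)$ with $z(i)\neq i+1$ forces genuine conjugation rather than absorption.

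The base case $z=n(n-1)\cdots1$ requires expanding $\prod_{1\le i<j\le n-i}(x_i+x_j-x_ix_j)$ into Grothendieck polynomials. Here I would use the $\mathrm{Sp}$-dominant product formula and expand each factor $x_i+x_j-x_ix_j=1-(1-x_i)(1-x_j)$ column by column. The aim is to match the result with the Hecke-atom set of the longest element by a direct transition or Pieri-type argument. I expect the main obstacle to be this base case together with the exact cancellation bookkeeping in the pairing. In particular, one must check that every atom of $s_izs_i$ is accounted for exactly once, including non-minimal atoms whose 0-Hecke products absorb an $s_i$. I would first verify the pairing on small cases, say $n=4,6$, before committing to it.
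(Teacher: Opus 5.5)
Citing Marberg--Pawlowski's Theorem~3.12, as you do first, is exactly what the paper does; the paper imports the statement without proof. Your optional self-contained sketch, however, would fail as written, because the set $\{w: w^{-1}\circ 1_{\fpf}\circ w=z\}$ is strictly larger than $\mathcal B_{\mathrm{FPF}}(z)$.

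Already for $n=2$ and $z=21=1_{\fpf}$, both $w=12$ and $w=21$ qualify, since $s_1\circ s_1\circ s_1=s_1$. Your formula would then give $\fG_{12}-\fG_{21}=1-x_1$, whereas $\fpfG_{21}=1$ is the empty product. Restricting to $w$ with $w^{-1}$ a paired word does not help. For $z=4321$ one has $3412\circ 2143\circ 3412=4321$ in the Demazure product. But by Proposition~\ref{p:paired-class-inverse-atoms} the inverse atoms are only $\mathcal P(4321)=\{1423,2314,2413\}$, and indeed $\fpfG_{4321}=\fG_{1342}+\fG_{3124}-\fG_{3142}$. Since the $\fG_w$ are linearly independent, an expansion over the wrong set is simply false. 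In these two examples the correct sets are recovered by one change: when conjugating $1_{\fpf}$ successively along a reduced word of $w$, a letter $s_i$ applied to some $y$ with $y(i)=i+1$ kills the term instead of fixing $y$. Some restriction of this kind has to be part of the definition.

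The justification of your inductive step is also off. Since $s_izs_i$ has a descent at $i$, the Demazure product gives $s_i\circ(s_izs_i)\circ s_i=s_izs_i$, not $z$. The correct argument runs the other way. If $w$ has a descent at $i$, then $w^{-1}\circ1_{\fpf}\circ w=s_i\circ y\circ s_i$ with $y=(ws_i)^{-1}\circ1_{\fpf}\circ(ws_i)\in\ifpf_n$. This forces $y\in\{z,s_izs_i\}$, and the unmatched terms are exactly those with $y=z$.

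With that repair, your sign-reversing matching goes through verbatim even for the naive Demazure set. So the $\overline{\partial_i}$ step is formal and cannot detect the error. The substance of the theorem lies elsewhere: in the correct definition of the atoms, for which the closure properties used in the matching must be re-established, and in the base case $z=n(n-1)\cdots1$. That base case means expanding $\prod_{1\le i<j\le n-i}(x_i+x_j-x_ix_j)$ over the correct atom set. Your sketch leaves this as an unspecified ``transition or Pieri-type argument,'' and it is exactly where the naive set breaks.
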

For combinatorial reasons, it is often more convenient to consider the set of inverses of fixed-point-free Hecke atoms. Let $\PP(z) = \{w : w^{-1} \in \mathcal B_{\mathrm{FPF}}(z)\}$ be the set of \definition{inverse Hecke atoms}. For an explicit definition of $\PP(z)$, see Proposition~\ref{p:paired-class-inverse-atoms}.

\subsection*{Castelnuovo--Mumford regularity}
We first recall the commutative-algebraic definition of
Castelnuovo--Mumford regularity.
Let $ R=\mathbb C[x_1,\ldots,x_N]$ be a standard-graded polynomial ring, so that $\deg(x_i)=1$, and let $I\subseteq R$ be a homogeneous ideal. For $d\in \ZZ$, write $R(-d)$ for the graded free $R$-module obtained from $R$ by shifting all degrees by $d$.

The \definition{minimal graded free resolution} of \(R/I\), which is unique up to isomorphism, is an exact sequence of graded $R$-module maps of the form
\[
    0\longrightarrow
    \bigoplus_{d\in\mathbb Z}R(-d)^{\beta_{p,d}(R/I)}
    \longrightarrow\cdots\longrightarrow
    \bigoplus_{d\in\mathbb Z}R(-d)^{\beta_{0,d}(R/I)}
    \longrightarrow R/I\longrightarrow0.
\]
The integers $\beta_{i,d}(R/I)$ occurring in this resolution are the graded Betti numbers of $R/I$.  The \definition{Castelnuovo--Mumford regularity} of $R/I$ is
\begin{equation*}
    \reg(R/I)
    :=
    \max\{d-i:\beta_{i,d}(R/I)\neq0\}.
\end{equation*}
When $X=\operatorname{Spec}(R/I)$ is an affine variety, we also write
$\reg(X)$ for $\reg(R/I)$.

The Hilbert series of $R/I$ is
\[
    \hilb(R/I;t) := \sum_{d\geq0}\dim_{\CC}(R/I)_d\,t^d.
\]
It has a unique expression of the form
\begin{equation*}
    \hilb(R/I;t) = \frac{K(R/I;t)}{(1-t)^N},
\end{equation*}
where $K(R/I;t)\in \ZZ[t]$.  The numerator
$K(R/I;t)$ is the \definition{$K$-polynomial} of $R/I$.

If $\prec$ is a term order, then $R/I$ and
$R/\operatorname{in}_{\prec}(I)$ have the same Hilbert series and hence
the same $K$-polynomial.  This permits the use of Gr\"obner degenerations
to study $K(R/I;t)$.  

Recall that $R/I$ is \definition{Cohen--Macaulay} if
\[
    \operatorname{depth}(R/I)=\dim(R/I).
\]
When $R/I$ is Cohen--Macaulay, $\reg(R/I)$ is determined by the degree of the K-polynomial. The following lemma is well known to experts (see~\cite{BV15}*{Lemma~2.5} and~\cite{RRRSW}).

\begin{lem}~\cite{psw24}*{Lemma~2.2}~\cite{PS25}*{Lemma~2.8}
\label{lem:CM-regularity-K-polynomial}
If $R/I$ is Cohen--Macaulay, then
\[
    \reg(R/I)
    =
    \deg_t K(R/I;t)-\operatorname{ht}(I).
\]
\end{lem}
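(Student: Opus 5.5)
The plan is to reduce to the Artinian case by cutting with a regular sequence of linear forms, and then read off both sides of the identity from the Hilbert function. Let $d=\dim(R/I)$. Since $R$ is a polynomial ring, $\operatorname{ht}(I)=N-d$; write $c=N-d$.

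First, since $R/I$ is Cohen--Macaulay, $\operatorname{depth}(R/I)=d$. Because $\CC$ is infinite, prime avoidance lets us choose linear forms $\ell_1,\dots,\ell_d$ that form a regular sequence on $R/I$. Change coordinates so that $\ell_1,\dots,\ell_d$ are variables, and set $S=R/(\ell_1,\dots,\ell_d)\cong\CC[y_1,\dots,y_c]$ and $A=(R/I)/(\ell_1,\dots,\ell_d)$.

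Two standard facts then give the reduction.
\begin{itemize}
\item \emph{Hilbert series.} Each $\ell_j$ is a nonzerodivisor of degree one, so the exact sequences $0\to M(-1)\xrightarrow{\ell_j}M\to M/\ell_jM\to0$ give $\hilb(A;t)=(1-t)^d\hilb(R/I;t)=K(R/I;t)/(1-t)^{c}$. Since $A$ is Artinian, $h(t):=\hilb(A;t)$ is a polynomial. Hence $K(R/I;t)=h(t)(1-t)^c$ and $\deg_t K(R/I;t)=\deg h+c$.
\item \emph{Betti numbers.} Tensoring a minimal graded free resolution of $R/I$ over $R$ with $R/(\ell_1,\dots,\ell_d)$ stays exact, since $\ell_1,\dots,\ell_d$ is regular on both $R$ and $R/I$. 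It therefore yields a minimal graded free resolution of $A$ over $S$ with the same graded Betti numbers. So $\reg(R/I)=\reg_S(A)$.
\end{itemize}
It thus suffices to show that $\reg_S(A)=s$, where $s=\deg h$ is the largest degree with $A_s\neq0$.

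For the Artinian case, compute $\beta_{i,j}(A)=\dim_\CC\operatorname{Tor}_i^S(A,\CC)_j$ via the Koszul complex $K_\bullet(y_1,\dots,y_c)\otimes_S A$. Its $i$-th term is $\bigwedge^i\CC^c(-i)\otimes A$, which is concentrated in degrees $i,\dots,i+s$. Hence $\beta_{i,j}(A)\neq0$ forces $j-i\le s$, so $\reg_S(A)\le s$.

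For equality, look at the top homology. We have $\operatorname{Tor}_c^S(A,\CC)\cong(0:_A\mathfrak m)(-c)$, the socle of $A$ shifted by $c$. The socle contains $A_s\neq0$, since $\mathfrak m A_s\subseteq A_{s+1}=0$. Therefore $\beta_{c,c+s}(A)\neq0$, and $\reg_S(A)=s$.

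Combining the steps, $\reg(R/I)=s=\deg_t K(R/I;t)-c=\deg_t K(R/I;t)-\operatorname{ht}(I)$. The only delicate point is the Betti-number bullet: that reducing modulo a regular sequence of linear forms preserves graded Betti numbers. This rests on the vanishing of the relevant Tor, equivalently a repeated mapping-cone argument for each $\ell_j$, which is standard.
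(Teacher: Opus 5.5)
Your proof is correct and complete. The paper gives no proof of this lemma; it quotes it from \cite{psw24}*{Lemma~2.2} and \cite{PS25}*{Lemma~2.8}, with pointers to \cite{BV15}*{Lemma~2.5} and \cite{RRRSW}, so there is no internal argument to compare against.

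Your argument is the standard Artinian reduction, and each step holds:
\begin{itemize}
\item Your Hilbert-series step is the identity $K(R/I;t)=h(t)(1-t)^{\operatorname{ht}(I)}$, where $h$ is the $h$-polynomial.
\item The Tor-vanishing step transports the graded Betti numbers correctly. It uses the Koszul resolution of $R/(\ell_1,\dots,\ell_d)$ together with $H_i(\ell_1,\dots,\ell_d;R/I)=0$ for $i>0$. Minimality survives because the differentials still have entries in the maximal ideal.
\item The socle description of $\operatorname{Tor}^S_c(A,\CC)$ supplies the matching lower bound.
\end{itemize}
One detail is left implicit. A regular sequence of linear forms is automatically linearly independent: otherwise some $\ell_j$ would annihilate the nonzero module $(R/I)/(\ell_1,\dots,\ell_{j-1})(R/I)$. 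This is what justifies your change of coordinates.

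For comparison, the other common proof avoids the reduction. Since $R/I$ is Cohen--Macaulay of codimension $c$, the $R$-dual of its minimal free resolution is a minimal resolution of $\operatorname{Ext}^c_R(R/I,R)$. This forces the maximal shifts $t_i=\max\{j:\beta_{i,j}\neq 0\}$ to be strictly increasing in $i$. Hence the top-degree term of $K(R/I;t)=\sum_{i,j}(-1)^i\beta_{i,j}t^j$ is $(-1)^c\beta_{c,t_c}t^{t_c}$, with no possible cancellation, and $\reg(R/I)=\max_i(t_i-i)=t_c-c$. That route relies on duality rather than a choice of linear forms, so it does not need an infinite ground field. Yours is more elementary and also identifies the regularity with the top socle degree of an Artinian reduction.
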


When $R/I$ is not Cohen--Macaulay, regularity itself is not preserved under Gr\"obner degeneration in general.

\subsection*{Skew-symmetric matrix Schubert varieties}
Let
\[
    \operatorname{Mat}^{\mathrm{SS}}_n
    :=
    \{A\in\operatorname{Mat}_{n\times n}(\mathbb C):
      A^{\mathsf T}=-A\}
\]
be the set of skew-symmetric matrices.
Its coordinate ring is
\[
    R = R_n^{\mathrm{SS}}
    :=
    \mathbb C[\operatorname{Mat}^{\mathrm{SS}}_n]
    =
    \mathbb C[u_{ij}:n\geq i>j\geq1].
\]

For $z\in\ifpf_n$, let 
\begin{align*}
    X_z^{\mathrm{SS}} = \left\{ A\in\operatorname{Mat}^{\mathrm{SS}}_n: \operatorname{rank}A_{[i][j]} \leq \operatorname{rank}z_{[i][j]} \text{ for all }i,j \right\}
\end{align*}
be the \definition{skew-symmetric matrix Schubert variety} of $z$, and let
\[
    I_z^{\mathrm{SS}}:= I(X_z^{\mathrm{SS}}) \subseteq R_n^{\mathrm{SS}}.
\]
Marberg and Pawlowski proved that a natural initial ideal of
$ I_z^{\mathrm{SS}}$ is squarefree and is the Stanley--Reisner ideal of a
shellable simplicial complex~\cite[Theorem~1.6]{MP22Grobner}. It follows that the coordinate ring
\[
    A_z := \mathbb C[X_z^{\mathrm{SS}}] = R/ I_z^{\mathrm{SS}}
\]
is Cohen--Macaulay. Their primary decomposition~\cite[Theorem~1.5]{MP22Grobner} also shows that 
\begin{equation*}
    \operatorname{ht}( I_z^{\mathrm{SS}})
    = \operatorname{codim}(X_z^{\mathrm{SS}})
    = \ell_{\mathrm{FPF}}(z).
\end{equation*}

Therefore, to compute the Castelnuovo--Mumford regularity of skew-symmetric matrix Schubert varieties, it remains to study the degree of the K-polynomial of $R/ I_z^{\mathrm{SS}}$. Marberg and Pawlowski worked with a finer torus multigrading on \(R\). We compare its \(K\)-polynomial with the \(K\)-polynomial for the standard grading. Let $K_z^{\mathrm{std}}(t) := K(A_z;t)$ denote the $K$-polynomial in the standard grading $\deg(u_{ij})=1$. Let $K^T_z(\mathbf a) := K(A_z;\mathbf a)$ denote the $K$-polynomial with respect to the multigrading $\deg_T(u_{ij})=e_i+e_j$, recorded multiplicatively as the monomial $a_i a_j$. The Hilbert series under the torus multigrading is 
\begin{align*}
    \hilb_T(A_z; \mathbf a) := \sum_{\alpha \in \ZZ^n} \dim_{\CC}(A_z)_{\alpha}a^{\alpha} = \frac{K^T_z(\mathbf a)}{\prod_{i>j}(1-a_i a_j)}.
\end{align*}

\begin{rem}
Marberg and Pawlowski denoted the K-polynomial of $R/I$ by $\mathcal K(I)$ in~\cite{MP22Grobner}*{Definition~4.16}. We use a different notation and denote the same object as $K(R/I)$.
\end{rem}

\begin{lem}
\label{lem:torus-standard-specialization}
For $z\in\ifpf_n$,
\[
    \operatorname{Hilb}_T(A_z;s,\ldots,s)
    =\operatorname{Hilb}_{\mathrm{std}}(A_z;s^2),
    \quad \text{ and }\quad
    K_z^T(s,\ldots,s)=K_z^{\mathrm{std}}(s^2).
\]
\end{lem}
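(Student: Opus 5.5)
The plan is to compare the two gradings directly at the level of the graded pieces of $A_z$. Both the Hilbert series and the $K$-polynomial identities should follow from the observation that the $\ZZ^n$-multigrading refines the standard grading.

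First, check that $I_z^{\mathrm{SS}}$ is homogeneous for the torus multigrading $\deg_T(u_{ij}) = e_i + e_j$. It is generated by minors of $A_{[i][j]}$, where $A$ has entries $u_{ij}$ below the diagonal, $-u_{ji}$ above, and $0$ on it. A nonzero term of a minor with row set $P$ and column set $Q$ is a product $\prod_{k} u_{p_k q_{\sigma(k)}}$ (up to sign), which has $T$-degree $\sum_{p \in P} e_p + \sum_{q\in Q} e_q$. This is independent of $\sigma$, so every minor is $T$-homogeneous. Hence $I_z^{\mathrm{SS}}$ is $T$-homogeneous; alternatively, it is the vanishing ideal of a variety stable under the torus action $A \mapsto tAt$, so it is $T$-homogeneous. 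This gives the decomposition $A_z = \bigoplus_{\alpha \in \ZZ^n} (A_z)_\alpha$.

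Second, relate the gradings. The standard degree of a monomial in the $u_{ij}$ equals its number of factors, while $|\alpha| := \sum_k \alpha_k$ of its $T$-degree is twice that number. Thus $(A_z)_d^{\mathrm{std}} = \bigoplus_{|\alpha| = 2d} (A_z)_\alpha$, and $(A_z)_\alpha = 0$ when $|\alpha|$ is odd. Substituting $a_k = s$ turns $a^\alpha$ into $s^{|\alpha|}$, so
\[
\operatorname{Hilb}_T(A_z; s,\ldots,s) = \sum_{d\ge 0} \dim (A_z)^{\mathrm{std}}_d \, s^{2d} = \operatorname{Hilb}_{\mathrm{std}}(A_z; s^2).
\]
Here it matters that the specialization is legitimate as formal power series: only finitely many $\alpha \in \ZZ_{\ge 0}^n$ have a given $|\alpha|$, and each $(A_z)_\alpha$ is finite-dimensional.

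Third, specialize the rational expressions. The denominator $\prod_{i>j}(1-a_ia_j)$ becomes $(1-s^2)^N$ with $N = \binom{n}{2}$, which is exactly the standard denominator $(1-t)^N$ at $t = s^2$. Multiplying the Hilbert series identity by $(1-s^2)^N$ gives $K^T_z(s,\ldots,s) = K^{\mathrm{std}}_z(s^2)$ as power series, and hence as polynomials. No step is a real obstacle. The only point needing care is the $T$-homogeneity of $I_z^{\mathrm{SS}}$, that is, that the sign conventions of the skew-symmetric matrix do not spoil it, and the degree computation above settles this.
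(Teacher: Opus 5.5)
Your proof is correct and follows the paper's argument: decompose $(A_z)^{\mathrm{std}}_d=\bigoplus_{|\alpha|=2d}(A_z)_\alpha$, specialize every $a_k$ to $s$, and match the denominators $\prod_{i>j}(1-a_ia_j)$ and $(1-t)^{\binom{n}{2}}$ at $t=s^2$. Your justification of $T$-homogeneity, which the paper only asserts, should rest on the torus-stability argument, because $I(X_z^{\mathrm{SS}})$ is not generated by minors in general---for $z=(1,2)(3,5)(4,6)$ the degree-two Pfaffian of $A_{[4][4]}$ lies in it, while all defining minors have degree at least three---but Pfaffians are also $T$-homogeneous, so your conclusion stands.
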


\begin{proof}
A monomial $\prod_{i>j}u_{ij}^{b_{ij}}$ of multidegree $\alpha$
satisfies $|\alpha|=2\sum_{i>j}b_{ij}$. Thus its standard degree is $|\alpha|/2$. Since $ I_z^{\mathrm{SS}}$ is homogeneous with respect to the multigrading, the same compatibility holds in the coordinate ring $A_z = R/ I^{\mathrm{SS}}_z$. Let $ (A_z)_d^{\mathrm{std}}$ denote the degree $d$ part of $A_z$ with respect to the standard grading and let $(A_z)_\alpha^T$ be the degree $\alpha$ part of $A_z$ with respect to the torus equivariant grading. Then, as vector spaces, 
\[
    (A_z)_d^{\mathrm{std}}
    =\bigoplus_{|\alpha|=2d}(A_z)_\alpha^T.
\]
Since only finitely many summands occur for each $d$, we have
\[
\begin{aligned}
    \operatorname{Hilb}_T(A_z;s,\ldots,s) =\sum_\alpha\dim_{\mathbb C}(A_z)_\alpha^T\,s^{|\alpha|} =\sum_{d\geq0}\dim_{\mathbb C}(A_z)_d^{\mathrm{std}}\,s^{2d} =\operatorname{Hilb}_{\mathrm{std}}(A_z;s^2).
\end{aligned}
\]
The denominators of the Hilbert series are $\prod_{i>j}(1-a_i a_j)$ and $(1-t)^{\binom{n}{2}}$, respectively. Therefore, setting every $a_i \mapsto s$  and $t \mapsto s^2$ gives
\[
    \frac{K_z^T(s,\ldots,s)}{(1-s^2)^{\binom{n}{2}}}
    =\frac{K_z^{\mathrm{std}}(s^2)}{(1-s^2)^{\binom{n}{2}}}. \qedhere
\]
\end{proof}

Marberg and Pawlowski called $K_z^T(\mathbf a)$ the symplectic
Grothendieck polynomial in character variables
\cite[Definition~4.19]{MP22Grobner}. The symplectic Grothendieck polynomials in the $\x$-variables defined earlier are obtained by the change of variables $a_i \mapsto 1 - x_i$:
\[
    \fpfG_z(\mathbf x)=K_z^T(1-x_1,\ldots,1-x_n).
\]
This substitution gives what is known as the \definition{twisted K-polynomial}~\cite{CCRMM24}.

To relate the degrees of $\fpfG_z(\mathbf x)$, $K_z^{\mathrm{std}}(t)$, and $K_z^T(\mathbf a)$, we require the following formula.

\begin{thm}~\cite{MP22Grobner}*{Theorem~4.28}
\label{t:MP-K-polynomial-expansion}
Let $z\in\ifpf_n$, then the $K$-polynomial of
$X_z^{\mathrm{SS}}$ is
\begin{align}
\label{eq:MP-K-polynomial-expansion}
    K_z^T(\mathbf a)
    =
    \sum_{D\in\mathcal D(z)}
    (-1)^{|D|-\ell_{\mathrm{FPF}}(z)}
    \prod_{(i,j)\in D}(1-a_i a_j),
\end{align}
where $\mathcal D(z)$ is the set of extended fpf-involution pipe dreams of $z$. 
\end{thm}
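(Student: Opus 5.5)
The plan is to follow the Gröbner-degeneration route of Knutson--Miller, adapted to the skew-symmetric setting as in~\cite{MP22Grobner}. First, fix the term order on $R=\mathbb C[u_{ij}: i>j]$ under which $\operatorname{in}_\prec(I_z^{\mathrm{SS}})$ is squarefree. It is the Stanley--Reisner ideal $I_{\Delta_z}$ of a shellable simplicial complex $\Delta_z$ on the vertex set $\{(i,j): i>j\}$~\cite[Theorem~1.6]{MP22Grobner}. Since $I_z^{\mathrm{SS}}$ is homogeneous for the torus multigrading $\deg_T(u_{ij})=e_i+e_j$, so is its initial ideal. Gröbner degeneration preserves the multigraded Hilbert series, so $K^T_z(\mathbf a)=K(R/I_{\Delta_z};\mathbf a)$, and it suffices to compute the $K$-polynomial of a Stanley--Reisner ring with vertex weights $t_{(i,j)}=a_ia_j$.

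Second, use the standard face expansion
\[
\hilb(R/I_\Delta)=\sum_{F\in\Delta}\prod_{v\in F}\frac{t_v}{1-t_v},
\qquad\text{so}\qquad
K=\sum_{F\in\Delta}\prod_{v\in F}t_v\prod_{v\notin F}(1-t_v).
\]
Then substitute $t_v=1-(1-t_v)$ and regroup by Möbius inversion on the face poset. The coefficient of $\prod_{v\in D}(1-t_v)$ becomes $\sum_{G}(-1)^{|D|-|V\setminus G|}$, taken over faces $G\supseteq V\setminus D$. If $\Delta_z$ is a homology ball or sphere of dimension $|V|-\ell_{\mathrm{FPF}}(z)-1$, this sum is a reduced Euler characteristic of a link. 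It equals $(-1)^{|D|-\ell_{\mathrm{FPF}}(z)}$ exactly when $V\setminus D$ is an interior face, and vanishes otherwise. This gives the displayed formula with $\mathcal D(z)=\{V\setminus F: F \text{ interior face of }\Delta_z\}$.

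Third, identify these sets combinatorially. The facets of $\Delta_z$ are complements of reduced fpf-involution pipe dreams, which have exactly $\ell_{\mathrm{FPF}}(z)$ crosses. The interior faces should be complements of the extended fpf-involution pipe dreams, i.e.\ those $D$ whose reading word has involution Demazure product equal to $z$, together with the appropriate fpf normalization. I would realize $\Delta_z$ as a subword complex for the Demazure (0-Hecke) action on fpf-involutions: the word is read from the staircase $\{(i,j):i>j\}$, and the target is $z$. I would then adapt the Knutson--Miller theorem to this setting. That theorem says a subword complex is vertex-decomposable, and is a ball or sphere whose interior faces are those with Demazure product of the complement equal to the target.

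The main obstacle is the third step. One must prove that the Marberg--Pawlowski complex is genuinely a subword-type complex for the involution Demazure product, and that it is a manifold with boundary (a ball or sphere). The shelling alone gives Cohen--Macaulayness but not the interior/boundary dichotomy. I would first try to verify the Knutson--Miller vertex-decomposition axioms directly for the involution Demazure product. The key lemma needed is that deleting or contracting the first letter again yields a complex of the same type, with the target either fixed or conjugated by $s_i$.
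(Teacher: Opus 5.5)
The paper gives no proof of this statement. It is quoted from Marberg--Pawlowski and used only through the facts that $\mathcal D(z)$ is finite and nonempty (Lemma~\ref{lem:standard-torus-degree-factor}), so your proposal has to stand on its own.

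Your first two steps are correct. Gr\"obner degeneration preserves the $T$-graded $K$-polynomial. Your inversion shows that the coefficient of $\prod_{v\in D}(1-t_v)$ is $-\tilde\chi\bigl(\operatorname{lk}_{\Delta_z}(V\setminus D)\bigr)$, and $0$ if $V\setminus D\notin\Delta_z$. This gives the displayed formula with $\mathcal D(z)$ the complements of interior faces, \emph{provided} $\Delta_z$ is a homology ball or sphere.

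That proviso, together with the identification of the interior faces, is the whole content of the theorem, and your third step leaves it as a hope. So there is a genuine gap. The ingredient you single out is also not the missing one. Marberg--Pawlowski already give shellability, and a deletion/link recursion on one letter by itself only reproves that. What is missing is the pseudomanifold property: every codimension-one face lies in at most two facets. Given that, the Danaraj--Klee theorem yields a ball or sphere.

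In the Knutson--Miller setting this property comes from an exchange lemma. You would need its fpf analogue: if $P$ is a reduced fpf-involution subword for $z$ and $\sigma\notin P$, then $P\cup\{\sigma\}$ contains at most one other reduced fpf-involution subword for $z$. Moreover, it contains one exactly when a suitably defined product of $P\cup\{\sigma\}$ equals $z$. If you do use a recursion, peel the letter that acts last, next to $z$. For the letter acting first on the base point, the deletion changes the base point rather than the target.

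The interior-face criterion you sketch is also wrong as stated, and the ``appropriate fpf normalization'' is exactly the delicate point. The involution Demazure product $\delta(D)^{-1}\circ\delta(D)$ of $D=\varnothing$ is $1$, not $\Theta=(1,2)(3,4)\cdots$. So you must act on $\Theta$ by $y\mapsto s_i\circ y\circ s_i$. Even this twisted $0$-Hecke action fails if a letter $s_i$ is absorbed when $y(i)=i+1$.

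Take $n=4$ and $z=\Theta$. Here $\ell_{\mathrm{FPF}}(\Theta)=0$, so $I^{\mathrm{SS}}_\Theta=0$ and $\Delta_\Theta$ is the full simplex on $V$. Its only interior face is $V$, hence $\mathcal D(\Theta)=\{\varnothing\}$. The cell $(3,1)$ carries $s_3$ and $\Theta(3)=4$, so $s_3\circ\Theta\circ s_3=\Theta$. A Demazure criterion would therefore declare $V\setminus\{(3,1)\}$ interior. In the simplex it is a boundary face: its link is a single vertex, with $\tilde\chi=0$. So such letters must kill the word rather than be absorbed.

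With that convention you still have to do three things:
\begin{itemize}
\item prove the exchange lemma above;
\item pass from ridges to arbitrary faces, which needs a subword/monotonicity property for the modified product;
\item check that the resulting sets are Marberg--Pawlowski's extended fpf-involution pipe dreams.
\end{itemize}
None of this transfers automatically from Knutson--Miller. The reduced fpf-involution words of $z$ are the union of the reduced words of several atoms, not the reduced words of a single group element.
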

We do not need the precise definition of extended fpf-involution pipe dreams. It is only important to us that $\mathcal D(z)$ is a nonempty, finite collection of subsets of $[n] \times [n]$.

\begin{lem}
\label{lem:standard-torus-degree-factor}
For $z\in\ifpf_n$,
\[
    \deg_{\mathbf x}\fpfG_z(\mathbf x)
    =\deg_{\mathbf a}K_z^T(\mathbf a)
    =2\deg_t K_z^{\mathrm{std}}(t).
\]
\end{lem}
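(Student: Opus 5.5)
The first equality follows from the substitution $a_i\mapsto 1-x_i$, which is an invertible affine change of variables. An affine substitution of the form $x_i\mapsto 1-a_i$ (and its inverse) never raises total degree. Since it is invertible, it must preserve total degree exactly. Concretely, if $f(\mathbf a)$ has total degree $d$, then $g(\mathbf x)=f(1-x_1,\ldots,1-x_n)$ has top-degree part $(-1)^d \widehat f(\mathbf x)$, where $\widehat f$ is the top homogeneous component of $f$. This is nonzero, so $\deg_{\mathbf x}\fpfG_z=\deg_{\mathbf a}K^T_z$.

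For the second equality, Lemma~\ref{lem:torus-standard-specialization} gives $K^T_z(s,\ldots,s)=K^{\mathrm{std}}_z(s^2)$. The right-hand side has degree exactly $2\deg_t K^{\mathrm{std}}_z$ in $s$. Specializing all $a_i$ to $s$ can only lower degree, so $2\deg_t K^{\mathrm{std}}_z\le \deg_{\mathbf a}K^T_z$. The main obstacle is the reverse inequality: the top homogeneous component $\widehat{K}$ of $K^T_z$ might cancel under $a_i\mapsto s$, i.e.\ we might have $\widehat K(1,\ldots,1)=0$.

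To rule this out, I will use Theorem~\ref{t:MP-K-polynomial-expansion}. Expanding each product $\prod_{(i,j)\in D}(1-a_ia_j)$ and substituting $a_i\mapsto 1-x_i$, we have $1-a_ia_j\mapsto x_i+x_j-x_ix_j$. Thus
\[
\fpfG_z(\mathbf x)=\sum_{D\in\mathcal D(z)}(-1)^{|D|-\ell_{\mathrm{FPF}}(z)}\prod_{(i,j)\in D}(x_i+x_j-x_ix_j).
\]
Rather than track sign cancellation through this sum, I will compare the two specializations directly. Setting $a_i=s$ corresponds to $x_i=1-s$. Hence
\[
K^{\mathrm{std}}_z(s^2)=\fpfG_z(1-s,\ldots,1-s).
\]
The required nonvanishing is therefore equivalent to the statement that the top homogeneous component $\widehat{\fpfG}_z$ of $\fpfG_z(\mathbf x)$ satisfies $\widehat{\fpfG}_z(1,\ldots,1)\neq0$.

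I will show this using sign-coherence of the Grothendieck expansion. By Theorem~\ref{t: symplectic to Gro expansion},
\[
\fpfG_z=\sum_{w\in\mathcal B_{\mathrm{FPF}}(z)}(-1)^{\ell(w)-\ell_{\mathrm{FPF}}(z)}\fG_w.
\]
It is known that $\fG_w(\mathbf x)$ has sign-alternating coefficients: a monomial of degree $d$ carries the sign $(-1)^{d-\ell(w)}$. Therefore every monomial of degree $d$ in $(-1)^{\ell(w)-\ell_{\mathrm{FPF}}(z)}\fG_w$ carries the sign $(-1)^{d-\ell_{\mathrm{FPF}}(z)}$. This sign is independent of $w$, so there is no cancellation between summands in any fixed degree.

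It follows that the coefficients of $\widehat{\fpfG}_z$ all share one sign, so $\widehat{\fpfG}_z(1,\ldots,1)\neq0$. Hence $\fpfG_z(1-s,\ldots,1-s)$ has degree exactly $\deg\fpfG_z$ in $s$. Combined with the first equality, this gives $2\deg_t K^{\mathrm{std}}_z=\deg_{\mathbf a}K^T_z$.
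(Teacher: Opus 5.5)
Your proof is correct, but it argues differently from the paper at both steps.

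\textbf{The paper's route.} The paper derives everything from the extended fpf-involution pipe dream formula of Theorem~\ref{t:MP-K-polynomial-expansion}. Each factor $1-a_ia_j$ (resp.\ $x_i+x_j-x_ix_j$) has top part $-a_ia_j$ (resp.\ $-x_ix_j$). Hence the top-degree components of $K_z^T(\mathbf a)$ and $\fpfG_z(\mathbf x)$ are both $(-1)^{\ell_{\mathrm{FPF}}(z)}$ times a sum of monomials over the maximum-size diagrams $D\in\mathcal D(z)$. This single sign-coherence observation gives both equalities at once and identifies the common degree as $2\max_{D}|D|$.

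\textbf{Your first equality.} You use the general fact that an invertible affine substitution preserves total degree. This needs no combinatorics at all, and it is the cleaner argument for that step.

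\textbf{Your second equality.} You reduce to showing that the top-degree component of $\fpfG_z$ does not vanish at $(1,\ldots,1)$. You then get sign-coherence from the Hecke atom expansion of Theorem~\ref{t: symplectic to Gro expansion}, combined with the sign-alternation of ordinary Grothendieck polynomials (e.g.\ from the pipe dream formula~\eqref{eq:appendix-ordinary-pipe-dream}).

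\textbf{What each route buys.} Your route dispenses with Theorem~\ref{t:MP-K-polynomial-expansion} entirely; the paper invokes it only for this lemma. You rely instead on the Grothendieck expansion that the paper already needs for Theorem~\ref{t: main_top}. Your argument also shows that $\fpfG_z$ is sign-coherent in every degree, not just the top one. The paper's route, in exchange, gives a combinatorial interpretation of the degree in terms of extended fpf-involution pipe dreams.

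The displayed expansion over $\mathcal D(z)$ in your third paragraph is never used afterward and can be dropped.
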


\begin{proof}
Let $M=\max_{D\in\mathcal D(z)}|D|$. By Theorem~\ref{t:MP-K-polynomial-expansion}, the change of variables $a_i \mapsto 1 - x_i$ gives 
\begin{align*}
    \fpfG_z(\mathbf x) = K_z^T(1-x_1,\ldots,1-x_n) = \sum_{D\in\mathcal D(z)}
    (-1)^{|D|-\ell_{\mathrm{FPF}}(z)}
    \prod_{(i,j)\in D}(x_i + x_j - x_ix_j).
\end{align*}
The highest-degree homogeneous component is 
\begin{align*}
    (-1)^{\ell_{\mathrm{FPF}}(z)}
    \sum_{\substack{D\in\mathcal D(z)\\|D|=M}}
       \prod_{(i,j)\in D}x_i x_j.
\end{align*}
In particular, it is nonzero and all nonzero coefficients have the same sign, so this change of variables does not induce any cancellation in the highest-degree homogeneous part. Therefore, $\deg_{\x} \fpfG_z(\mathbf x) = 2M = \deg_{\mathbf a} K_z^T(\mathbf a)$.

To see the second equality, notice that by Theorem~\ref{t:MP-K-polynomial-expansion}, the highest-degree homogeneous component of $K_z^T(\mathbf a)$ is
\[
    (-1)^{\ell_{\mathrm{FPF}}(z)}
    \sum_{\substack{D\in\mathcal D(z)\\|D|=M}}
       \prod_{(i,j)\in D}a_i a_j.
\]
Again, it is nonzero and all nonzero coefficients have the same sign. So setting $a_1=\cdots=a_n=s$ does not induce any cancellation, hence by Lemma~\ref{lem:torus-standard-specialization},
\[
 \deg_{\mathbf a}K_z^T(\mathbf a) =\deg_s K_z^T(s,\ldots,s) =\deg_s K_z^{\mathrm{std}}(s^2) =2\deg_t K_z^{\mathrm{std}}(t). \qedhere
\]
\end{proof}

Therefore, by Lemma~\ref{lem:CM-regularity-K-polynomial} and Lemma~\ref{lem:standard-torus-degree-factor}, computing the Castelnuovo–Mumford regularity of skew-symmetric matrix Schubert varieties reduces to computing the degree of symplectic Grothendieck polynomials.

\section{Match code and match diagram}
\label{s: match code}

In this section, we introduce the match code and match diagram of $z \in \ifpf_{2m}$. They serve as analogues of inversion codes and Rothe diagrams for permutations. In general, these differ from the fpf-involution codes and symplectic Rothe diagram defined by Hamaker, Marberg, and Pawlowski~\cite{HMP22PD}.

\begin{defn}
\label{d: match diagram}
     For $ z \in \ifpf_{2m}$, define the \definition{match diagram} of $z = (b_1, c_1)(b_2,c_2) \cdots (b_m,c_m)$ to be the following subset of $[m] \times [2m]$:
    \begin{align*}
        \MD(z) = \{(i,b_j)\, : \,b_i < b_j < c_i, i < j \} \cup \{(i,c_j)\, : \,b_i < c_j < c_i, i < j \}.
    \end{align*}
    Equivalently, $\MD(z)$ can be created by placing lasers at $(i,b_i)$ and $(i,c_i)$. Lasers at  $(i,b_i)$ emit beams downward and lasers at $(i,c_i)$ emit beams downward and to the right. The cells in $[m] \times [2m]$ that are not hit by any laser beams form the match diagram $\MD(z)$.
\end{defn}
\begin{defn}
    \label{d: match_code}
        For $ z \in \ifpf_{2m}$, the \definition{match code} of $z$, denoted as $\code(z)$, is the weak composition of length $m$ whose $i\textsuperscript{th}$ entry is the number of cells in row $i$ of $\MD(z)$.
\end{defn}

\begin{exa}
    Let $z = (1,5)(2,3)(4,8)(6,7)$. We place lasers at $(1,1), (1,5)$ since the first cycle has numbers $1$ and $5$. In each row, the left laser shoots downwards and the right laser shoots downwards and to the right. The remaining cells form $\MD(z)$.
    \begin{align*}
        \begin{tikzpicture}
            [x=1em,y=1em,thick,color = blue]
            \draw[step=1,gray,ultra thin,dashed] (0,0) grid (8,4);
            \node[color=black] at (-0.5,3.5) {$1$};
            \node[color=black] at (-0.5,2.5) {$2$};
            \node[color=black] at (-0.5,1.5) {$3$};
            \node[color=black] at (-0.5,0.5) {$4$};
            \node[color=black] at (0.5,4.5) {$1$};
            \node[color=black] at (1.5,4.5) {$2$};
            \node[color=black] at (2.5,4.5) {$3$};
            \node[color=black] at (3.5,4.5) {$4$};
            \node[color=black] at (4.5,4.5) {$5$};
            \node[color=black] at (5.5,4.5) {$6$};
            \node[color=black] at (6.5,4.5) {$7$};
            \node[color=black] at (7.5,4.5) {$8$};
            \filldraw [red] (0.5,3.5) circle (1.5pt);
            \filldraw [red] (4.5,3.5) circle (1.5pt);
            \filldraw [red] (1.5,2.5) circle (1.5pt);
            \filldraw [red] (2.5,2.5) circle (1.5pt);
            \filldraw [red] (3.5,1.5) circle (1.5pt);
            \filldraw [red] (7.5,1.5) circle (1.5pt);
            \filldraw [red] (5.5,0.5) circle (1.5pt);
            \filldraw [red] (6.5,0.5) circle (1.5pt);
            \draw[color=red] (0.5,3.5)--(0.5, 0);
            \draw[color=red] (4.5,3.5)--(4.5, 0);
            \draw[color=red] (4.5,3.5)--(8, 3.5);
            \draw[color=red] (1.5,2.5)--(1.5, 0);
            \draw[color=red] (2.5,2.5)--(2.5, 0);
            \draw[color=red] (2.5,2.5)--(8, 2.5);
            \draw[color=red] (3.5,1.5)--(3.5, 0);
            \draw[color=red] (7.5,1.5)--(7.5, 0);
            \draw[color=red] (7.5,1.5)--(8, 1.5);
            \draw[color=red] (5.5,0.5)--(5.5,0);
            \draw[color=red] (6.5,0.5)--(6.5,0);
            \draw[color=red] (6.5,0.5)--(8,0.5);
            \filldraw [gray] (1.5,3.5) circle (3.5pt);
            \filldraw [gray] (2.5,3.5) circle (3.5pt);
            \filldraw [gray] (3.5,3.5) circle (3.5pt);
            \filldraw [gray] (5.5,1.5) circle (3.5pt);
            \filldraw [gray] (6.5,1.5) circle (3.5pt);
        \end{tikzpicture}
    \end{align*}
    Counting the number of cells in each row gives $\code(z) = (3,0,2,0)$.
\end{exa}

\begin{lem}
\label{l: two diagram bijection}
For $z\in\ifpf_{2m}$, the map 
\begin{align*}
    \varphi:\,\MD(z)&\longrightarrow D^{\mathrm{Sp}}(z)\\
     (i,r)&\longmapsto (b_i,r)
\end{align*}
is a bijection.
\end{lem}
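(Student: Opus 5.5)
The plan is to compare the two membership conditions directly, row by row. I will show that $\varphi$ lands in $D^{\mathrm{Sp}}(z)$ and is injective, and then that every cell of $D^{\mathrm{Sp}}(z)$ has a preimage. The key observation is that a row $p$ of $D^{\mathrm{Sp}}(z)$ can only be nonempty when $p<z(p)$, that is, when $p=b_i$ for some $i$. So the rows of $D^{\mathrm{Sp}}(z)$ that matter are exactly the left endpoints $b_1<\cdots<b_m$. The map $i\mapsto b_i$ is injective, and $\varphi$ does not change the column index, so $\varphi$ is injective. It therefore suffices to prove the following equivalence for each $i\in[m]$ and $r\in[2m]$:
\[
(i,r)\in\MD(z)\iff b_i<r<c_i \text{ and } b_i<z(r).
\]
The right-hand side is exactly the condition $(b_i,r)\in D^{\mathrm{Sp}}(z)$, since $z(b_i)=c_i$.

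For the forward direction I split into the two kinds of cells in Definition~\ref{d: match diagram}. If $r=b_j$ with $b_i<b_j<c_i$ and $i<j$, then $z(r)=c_j>b_j>b_i$. If $r=c_j$ with $b_i<c_j<c_i$ and $i<j$, then $z(r)=b_j>b_i$, because the $b$'s are increasing. In both cases $(b_i,r)\in D^{\mathrm{Sp}}(z)$.

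For the converse, suppose $b_i<r<c_i$ and $b_i<z(r)$. Then $r\notin\{b_i,c_i\}$, so $r$ lies in some other cycle $(b_j,c_j)$ with $j\neq i$. If $r=b_j$, then $b_i<b_j$ forces $i<j$, and $(i,b_j)$ is a cell of the first type. If $r=c_j$, then $z(r)=b_j>b_i$ again forces $i<j$, and $(i,c_j)$ is a cell of the second type. This gives surjectivity.

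There is no real obstacle here. The only point needing care is the preliminary remark that rows $p$ with $z(p)<p$ contribute nothing to $D^{\mathrm{Sp}}(z)$: such a row would need some $q$ with $p<q<z(p)<p$, which is impossible. That remark is what makes the surjectivity argument cover all of $D^{\mathrm{Sp}}(z)$.
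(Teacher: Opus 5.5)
Your proposal is correct and follows essentially the same route as the paper. Both arguments check the two membership conditions cell by cell, using that any nonempty row $p$ of $D^{\mathrm{Sp}}(z)$ must satisfy $p<z(p)$, so $p=b_i$, and that $b_i<r$ and $b_i<z(r)$ force the cycle containing $r$ to come after cycle $i$. You also make injectivity and the two cell types explicit, which the paper leaves implicit.
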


\begin{proof}
Suppose $(i,r)\in\MD(z)$, then $r$ is an endpoint of a cycle $(b_j,c_j)$ with $j>i$, and $b_i<r<c_i=z(b_i)$. Since $j>i$, we have $b_j>b_i$. So in either case, $z(r)>b_i$. Therefore $ b_i<r<z(b_i)$ and $  b_i<z(r)$ so $ (b_i,r)\in D^{\mathrm{Sp}}(z)$.

Conversely, suppose $(p,r)\in D^{\mathrm{Sp}}(z)$. Since $ p<z(p)$, the letter $p$ is the left endpoint of $(b_i,c_i)$ for some $i$. The definition of $D^{\mathrm{Sp}}(z)$ implies that $b_i<r<c_i$ and $ b_i<z(r)$. Thus both $r$ and $z(r)$ are greater than $b_i$. Hence the cycle containing $r$ has left endpoint greater than $b_i$, so it is a
later cycle $(b_j,c_j)$ with $j>i$. Therefore $(i,r)\in\MD(z)$.
\end{proof}

Similar to the fpf-involution codes, the match codes sum up to the fpf-Coxeter length of the fixed-point-free involution.
\begin{cor}
    For $z\in\ifpf_{2m}$, 
    \begin{align*}
         \sum_{i=1}^m \code_i(z)=\ell_{\mathrm{FPF}}(z).
    \end{align*}
\end{cor}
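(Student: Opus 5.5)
This follows immediately from the bijection in Lemma~\ref{l: two diagram bijection}. I will count cells of $\MD(z)$ in two ways, then transport the count to $D^{\mathrm{Sp}}(z)$.

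First, by Definition~\ref{d: match_code}, the $i$th entry $\code_i(z)$ is the number of cells of $\MD(z)$ lying in row $i$. The rows of $\MD(z)\subseteq[m]\times[2m]$ partition the diagram, so summing over $i\in[m]$ gives
\[
\sum_{i=1}^m \code_i(z)=|\MD(z)|.
\]

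Second, Lemma~\ref{l: two diagram bijection} gives $|\MD(z)|=|D^{\mathrm{Sp}}(z)|$. By the background section, the fpf-involution code entry $c_{\mathrm{FPF},i}(z)$ counts the cells in column $i$ of $D^{\mathrm{Sp}}(z)$. Hence
\[
\ell_{\mathrm{FPF}}(z)=\sum_i c_{\mathrm{FPF},i}(z)=|D^{\mathrm{Sp}}(z)|,
\]
since the columns likewise partition $D^{\mathrm{Sp}}(z)$.

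Chaining these equalities proves the claim. There is no real obstacle here. The only thing to check is the background assertion that $c_{\mathrm{FPF},i}(z)$ equals the size of column $i$ of $D^{\mathrm{Sp}}(z)$ under the paper's transposed convention, and that assertion is stated explicitly in Section~\ref{s:background}.
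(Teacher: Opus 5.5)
Your proposal is correct and follows the paper's own argument: sum the row counts of $\MD(z)$ to get $|\MD(z)|$, transfer to $|D^{\mathrm{Sp}}(z)|$ via the bijection of Lemma~\ref{l: two diagram bijection}, and identify that with $\ell_{\mathrm{FPF}}(z)$ as the sum of column counts. Your extra check that $c_{\mathrm{FPF},i}(z)$ counts column $i$ of $D^{\mathrm{Sp}}(z)$ under the transposed convention is consistent with the paper. Substituting $p=z(j)$ turns the defining set into $\{p<i : z(p)>i,\ p<z(i)\}$, which is exactly column $i$.
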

\begin{proof}
    By Lemma~\ref{l: two diagram bijection},
    \begin{align*}
        &\sum_{i=1}^m \code_i(z) = |\MD(z)| = |D^{\mathrm{Sp}}(z)| = \ell_{\mathrm{FPF}}(z). \qedhere
    \end{align*}
\end{proof}

We say a weak composition $\alpha$ of length $m$ is \definition{valid} if $\alpha_i \leq 2(m-i)$ for all $i \in [m]$. 

\begin{lem}
\label{l: code composition bijection}
    Match code gives a bijection between $\ifpf_{2m}$ and valid weak compositions of length $m$.
\end{lem}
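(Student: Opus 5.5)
The match code records, for each cycle $(b_i,c_i)$ taken in canonical order, how many endpoints of later cycles lie strictly inside the interval $(b_i,c_i)$. The plan is to exhibit an explicit inverse, which rebuilds $z$ cycle by cycle, in the same spirit as recovering a permutation from its inversion code.

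First, the range. Write $z=(b_1,c_1)\cdots(b_m,c_m)$ in canonical order. Every letter of $[2m]$ other than $b_i$ and $c_i$ that exceeds $b_i$ is an endpoint of a later cycle. Indeed, $b_1<\cdots<b_m$ forces every letter smaller than $b_i$ to be an endpoint of an earlier cycle. So $\{b_i+1,\dots,2m\}$ consists of $c_i$ together with all $2(m-i)$ endpoints of cycles $j>i$. Moreover, $\code_i(z)$ counts exactly those endpoints that lie in $(b_i,c_i)$, since these are the row-$i$ cells of $\MD(z)$ by Definition~\ref{d: match diagram}. Hence $\code_i(z)\le 2(m-i)$, so $\code(z)$ is valid.

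Next, the key reformulation. Let $S_i=\{b_i,\dots,2m\}\setminus\{b_1,c_1,\dots,b_{i-1},c_{i-1}\}$ be the set of letters not used by the first $i-1$ cycles; it has $2(m-i+1)$ elements. By the remark above, $b_i=\min S_i$. Also $\code_i(z)$ equals the number of elements of $S_i$ strictly between $b_i$ and $c_i$, because the elements of $S_i\setminus\{b_i,c_i\}$ are precisely the endpoints of later cycles. Thus $c_i$ is the $(\code_i(z)+2)$-th smallest element of $S_i$.

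This gives the inverse map. Given a valid $\alpha$, set $S_1=[2m]$. At step $i$, let $b_i=\min S_i$, let $c_i$ be the $(\alpha_i+2)$-th smallest element of $S_i$, and put $S_{i+1}=S_i\setminus\{b_i,c_i\}$. Validity, $\alpha_i+2\le 2(m-i+1)=|S_i|$, guarantees that $c_i$ exists. The result is a fixed-point-free involution whose cycles are listed in canonical order, since the minima of successive $S_i$ increase. By the reformulation, its match code is $\alpha$. Conversely, applying this procedure to $\code(z)$ recovers $z$, because each $b_i$ and $c_i$ is forced at each step. So the two maps are mutually inverse.

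The only delicate point is the identification in the reformulation step: that the letters of $S_i$ lying between $b_i$ and $c_i$ are exactly the later-cycle endpoints counted by $\MD(z)$. As a sanity check, the count matches: valid compositions number $\prod_i (2(m-i)+1)=(2m-1)!!=|\ifpf_{2m}|$.
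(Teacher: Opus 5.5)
Your proposal is correct and is essentially the paper's proof: the inverse is built greedily by taking $b_i$ to be the smallest unused letter and $c_i$ the $(\alpha_i+2)$-th smallest unused letter, with validity guaranteeing that $c_i$ exists, and you add more detail on why the two maps are mutually inverse. One small slip is your claim that every letter other than $b_i,c_i$ exceeding $b_i$ is an endpoint of a later cycle, which is false: in $z=(1,4)(2,3)$ the letter $4=c_1$ exceeds $b_2=2$. The bound $\code_i(z)\le 2(m-i)$ still holds, because row $i$ of $\MD(z)$ only records endpoints of the $m-i$ later cycles, and your argument via the unused sets $S_i$ never relies on the false claim.
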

\begin{proof}
    It is clear from definition that given $z \in \ifpf_{2m}$, $\code(z)$ is a valid weak composition. We construct the inverse map from valid weak compositions $\alpha$ to fixed-point-free involutions. To construct cycle $i$, let $b_i$ be the smallest unused number and let $c_i$ be the $(\alpha_i + 2)$ smallest unused number. Exactly \(\alpha_i\) unused labels lie between \(b_i\) and \(c_i\), and these become endpoints of later cycles. Since $\alpha_i \leq 2(m-i)$, the first $m$ cycles must use the numbers $[2m]$, so $z \in \ifpf_{2m}$.
\end{proof}

Just like the inversion code for permutations, the match code for a fixed-point-free involution $z \in \ifpf_{2m}$ is invariant under embedding $z$ into $\ifpf_{2m+2}$.
After identifying \(z\in\mathcal I^{\mathrm{FPF}}_{2m}\) with the involution obtained by adjoining \((2m+1,2m+2)\), match code induces a bijection with finite weak compositions modulo trailing zeros.

\begin{defn}
    We now define a recurrence on fixed-point-free involutions using their match codes. For $z \in \ifpf_{2m}$, removing the first entry of $\code(z)$ gives another weak composition $\alpha$ such that $\alpha_i \leq 2((m-1)-i)$, so $\alpha = \code(y)$ for some $ y \in \ifpf_{2m-2}$. We write $z = (a,y)$ if $a$ is the first entry of $\code(z)$. By Lemma~\ref{l: code composition bijection}, $z$ uniquely determines $a$ and $y$ and vice versa. 
\end{defn}
We will be using this recurrence extensively for the rest of the paper alongside the following strictly increasing map: for $a\geq0$, let $\iota_a$ be the map
\[
    \iota_a(r) :=
    \begin{cases}
        r+1,&r\leq a,\\
        r+2,&r>a.
    \end{cases}
\]
So if $z = (a,y)$, the cycle notation of $z$ can be obtained from the cycle notation of $y$ by 
\begin{align*}
    z = (1,a+2)\iota_a(y).
\end{align*}

As a first application of this recurrence on fixed-point-free involutions, we obtain the following recurrences on the symplectic Rothe diagrams and match diagrams.

\begin{lem}
\label{lem:diagram-recursion}
Let $y \in \ifpf_{2m}$, $0 \leq a \leq 2m$, and $z = (a,y) \in \ifpf_{2m+2}$. Define $T_a:=\{(1,j):2\leq j\leq a+1\}$, then
\begin{equation}
\label{eq:MD-recursion}
    \MD(z)
    =
    T_a
    \cup
    \left\{
        (i+1,\iota_a(r)):
        (i,r)\in\MD(y)
    \right\},
\end{equation}
and
\begin{equation}
\label{eq:DSp-recursion}
    D^{\mathrm{Sp}}(z)
    =
    T_a
    \cup
    \left\{
        (\iota_a(i),\iota_a(j)):
        (i,j)\in D^{\mathrm{Sp}}(y)
    \right\}.
\end{equation}
\end{lem}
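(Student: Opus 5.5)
The plan is to verify both identities directly from the definitions, using the explicit description $z=(1,a+2)\,\iota_a(y)$. Write $y=(b'_1,c'_1)\cdots(b'_m,c'_m)$ in canonical order. Since $\iota_a$ is strictly increasing with image $[2m+2]\setminus\{1,a+2\}$, the cycles of $z$ in canonical order are
\[
(b_1,c_1)=(1,a+2),\qquad (b_{i+1},c_{i+1})=(\iota_a(b'_i),\iota_a(c'_i))\quad (1\le i\le m).
\]
Indeed, $1$ is the smallest left endpoint, and $\iota_a$ preserves the order of the remaining left endpoints. I will also use repeatedly that $\iota_a$ preserves strict inequalities between elements of $[2m]$.

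For \eqref{eq:MD-recursion}, I split $\MD(z)$ by row. In row $1$, the cells are $(1,r)$ where $r$ is an endpoint of a later cycle and $1<r<a+2$. Every element of $[2m+2]$ other than $1$ and $a+2$ is such an endpoint, so row $1$ is exactly $\{(1,j):2\le j\le a+1\}=T_a$. For rows $i+1\ge 2$, a cell $(i+1,r)$ lies in $\MD(z)$ precisely when $r\in\{b_{j+1},c_{j+1}\}$ for some $j>i$ and $b_{i+1}<r<c_{i+1}$. Writing $r=\iota_a(r')$, this condition is equivalent to: $r'\in\{b'_j,c'_j\}$ with $j>i$ and $b'_i<r'<c'_i$, because $\iota_a$ is monotone and its image avoids $1$ and $a+2$. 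That is exactly the condition $(i,r')\in\MD(y)$, which gives the second piece of \eqref{eq:MD-recursion}.

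For \eqref{eq:DSp-recursion}, I will avoid redoing the case analysis and instead transport \eqref{eq:MD-recursion} through the bijection $\varphi$ of Lemma~\ref{l: two diagram bijection}. Applying $\varphi_z$ to $T_a$ sends it to itself, since $b_1=1$. A cell $(i+1,\iota_a(r))$ is sent to $(b_{i+1},\iota_a(r))=(\iota_a(b'_i),\iota_a(r))$, which is $\iota_a\times\iota_a$ applied to $\varphi_y(i,r)=(b'_i,r)$. Since $\varphi_y$ is a bijection $\MD(y)\to D^{\mathrm{Sp}}(y)$, the image of the second piece is exactly $\{(\iota_a(i),\iota_a(j)):(i,j)\in D^{\mathrm{Sp}}(y)\}$. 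Since $\varphi_z$ is a bijection onto $D^{\mathrm{Sp}}(z)$, this yields \eqref{eq:DSp-recursion}.

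The only delicate point is the bookkeeping in the canonical ordering. Specifically, I must confirm that the cycle containing $r=\iota_a(r')$ is the $(j+1)$-th cycle of $z$ exactly when $r'$ lies in the $j$-th cycle of $y$, and that the conditions ``$j>i$'' correspond under this shift. Both follow from monotonicity of $\iota_a$ and the fact that $1$ is the smallest left endpoint, so I do not expect a genuine obstacle.
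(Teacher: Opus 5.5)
Your proposal is correct. For \eqref{eq:MD-recursion} you argue as the paper does: row $1$ is $T_a$, and since $\iota_a$ is order-preserving with image avoiding $\{1,a+2\}$, row $i$ of $\MD(y)$ becomes row $i+1$ of $\MD(z)$ after relabeling columns by $\iota_a$.

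For \eqref{eq:DSp-recursion} you take a genuinely different route. The paper verifies it directly from the definition of $D^{\mathrm{Sp}}$. It checks that row $1$ is $T_a$, using $z(1)=a+2$ and $z(j)>1$ for $1<j<a+2$, and that row and column $a+2$ are empty because $z(a+2)=1$. It then uses the identity $z(\iota_a(r))=\iota_a(y(r))$ for $r\in[2m]$ to see that the defining inequalities $i<j<z(i)$ and $i<z(j)$ are preserved under $\iota_a$.

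You instead push \eqref{eq:MD-recursion} through the bijections of Lemma~\ref{l: two diagram bijection}. Using $b_1=1$ and $b_{i+1}=\iota_a(b'_i)$, you note that $(i+1,\iota_a(r))$ maps to $(\iota_a(b'_i),\iota_a(r))$, which is the image of $(b'_i,r)$ under $\iota_a\times\iota_a$. Surjectivity of the bijection for $z$ then gives all of $D^{\mathrm{Sp}}(z)$. This is legitimate, since the bijection lemma is proved earlier directly from the definitions, so there is no circularity.

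Your route is shorter and shows that the two recursions are compatible under the bijection. The facts the paper checks by hand, such as the emptiness of row and column $a+2$, are absorbed into the bijection lemma. The paper's route is self-contained. It also makes explicit the identity $z(\iota_a(r))=\iota_a(y(r))$, which is the structural reason both recursions hold.
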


\begin{proof}
Let $y = (b_1, c_1) \cdots (b_{m}, c_{m})$, so $z = (1,a+2)(\iota_a(b_1), \iota_a(c_1)) \cdots(\iota_a(b_{m}), \iota_a(c_{m}))$.

For the match diagram, it is clear from definition that the first row of $\MD(z)$ is $T_a$. Since $\iota_a$ does not change relative order, for each cycle $(b_i,c_i)$ and letter $r \in [2m]$, $b_i<r<c_i$ if and only if $\iota_a(b_i)<\iota_a(r)<\iota_a(c_i)$. Thus row $i$ of $\MD(y)$ becomes row $i+1$ of $\MD(z)$, with every column relabeled by $\iota_a$. This proves \eqref{eq:MD-recursion}.

For the symplectic Rothe diagram, first observe that row $1$ consists
exactly of the cells $ (1,2),(1,3),\ldots,(1,a+1)$ as $z(1)=a+2$, and if $1<j<a+2$, then $z(j) \neq 1$, so $z(j)>1$. There are no cells in row or column $a+2$, since $z(a+2)=1$. Every other index has the form $\iota_a(r)$ for a unique $r\in[2m]$. For $r,s \in [2m]$, since $z(\iota_a(r))=\iota_a(y(r))$, we have
\[
\begin{aligned}
(\iota_a(r),\iota_a(s))\in D^{\mathrm{Sp}}(z)
&\Longleftrightarrow
\iota_a(r)<\iota_a(s)<z(\iota_a(r))
\text{ and }
\iota_a(r)<z(\iota_a(s))\\
&\Longleftrightarrow
r<s<y(r)
\text{ and }
r<y(s)\\
&\Longleftrightarrow
(r,s)\in D^{\mathrm{Sp}}(y).
\end{aligned}
\]
This proves \eqref{eq:DSp-recursion}.
\end{proof}

Match code also gives a direct characterization for when $z$ is $\mathrm{Sp}$-dominant.

\begin{prop}
\label{prop:Sp-dominant-match-code}
Let $z\in\ifpf_{2m}$. Then the following are equivalent:
\begin{itemize}
    \item[(1)] $z$ is $\mathrm{Sp}$-dominant;
    \item[(2)] there is a strict partition $\mu=(\mu_1>\mu_2>\cdots>\mu_k>0)$
    such that $\code(z)=(\mu_1,\ldots,\mu_k,0,\ldots,0)$;
    \item[(3)] there is a strict partition $\mu$ such that
    \[
        \MD(z)=D^{\mathrm{Sp}}(z)=\operatorname{Sh}(\mu),
    \]
    where $\operatorname{Sh}(\mu):= \{(i,i+r):1\leq i\leq k,\ 1\leq r\leq\mu_i\}$.
\end{itemize}
\end{prop}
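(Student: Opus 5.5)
The plan is to prove the cycle (1)$\Rightarrow$(3)$\Rightarrow$(2)$\Rightarrow$(1), using the bijection $\varphi$ of Lemma~\ref{l: two diagram bijection} together with the recursion of Lemma~\ref{lem:diagram-recursion}.

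\textbf{(3)$\Rightarrow$(2).} This step is immediate. Row $i$ of $\operatorname{Sh}(\mu)$ has $\mu_i$ cells for $i\le k$ and no cells below row $k$. Since $\MD(z)=\operatorname{Sh}(\mu)$, we get $\code(z)=(\mu_1,\ldots,\mu_k,0,\ldots,0)$.

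\textbf{(2)$\Rightarrow$(3).} I will induct on $m$ using $z=(a,y)$, where $a=\code_1(z)$.
\begin{itemize}
\item If $\code(z)=0$, both diagrams are empty: $|\MD(z)|=\ell_{\fpf}(z)=|D^{\mathrm{Sp}}(z)|$. So assume $a=\mu_1>0$.
\item The code of $y$ is $(\mu_2,\ldots,\mu_k,0,\ldots)$, again a strict partition padded with zeros. By induction, $\MD(y)=D^{\mathrm{Sp}}(y)=\operatorname{Sh}(\mu')$ with $\mu'=(\mu_2,\ldots,\mu_k)$.
\item Lemma~\ref{lem:diagram-recursion} writes $\MD(z)$ as $T_a$ together with the cells $(i+1,\iota_a(i+r))$, and $D^{\mathrm{Sp}}(z)$ as $T_a$ together with the cells $(\iota_a(i),\iota_a(i+r))$, where $1\le i\le k-1$ and $1\le r\le \mu_{i+1}$.
\item Strictness gives $i+r\le i+\mu_{i+1}\le \mu_1-1+\ldots$; more precisely $\mu_{i+1}\le \mu_1-i$, so $i+r\le \mu_1=a$. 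Also $i\le k-1\le a$. Hence $\iota_a$ acts as $+1$ on every index that occurs.
\item Therefore both diagrams equal $T_a\cup\{(i+1,i+1+r)\}=\operatorname{Sh}(\mu)$, which proves (3). This also shows they agree with the description in Definition~\ref{d : sp-dominant}, giving (2)$\Rightarrow$(1) at the same time.
\end{itemize}

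\textbf{(1)$\Rightarrow$(3).} Suppose $D^{\mathrm{Sp}}(z)=\operatorname{Sh}(\mu)$, and write $z=(a,y)$.
\begin{itemize}
\item By \eqref{eq:DSp-recursion}, the first row of $D^{\mathrm{Sp}}(z)$ is $T_a$, so $a=\mu_1$. (If row $1$ is empty, then $a=0$ and $k=0$, since rows $j\ge2$ of a shifted shape exist only if row $1$ is nonempty.)
\item The remaining cells are the $\iota_a$-images of $D^{\mathrm{Sp}}(y)$. They form the shifted rows $2,\ldots,k$, and these occupy indices in $\{2,\ldots,a+1\}$ because $i+\mu_i\le \mu_1+1$ by strictness.
\item On indices $\le a$, $\iota_a^{-1}$ is $-1$. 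So $D^{\mathrm{Sp}}(y)=\operatorname{Sh}(\mu_2,\ldots,\mu_k)$, provided no index maps to $a+2$ or beyond.
\item That holds because index $a+1=\iota_a(a)$ is still in the $+1$ range. Hence $y$ is $\mathrm{Sp}$-dominant.
\item By induction, $\code(y)=(\mu_2,\ldots,\mu_k,0,\ldots)$, so $\code(z)=(\mu_1,\mu_2,\ldots)$. This is (2), and then the previous step gives (3).
\end{itemize}

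\textbf{Main obstacle.} The only delicate part is the index bookkeeping: checking that every index in the shifted shape is at most $a$ (after the shift, at most $a+1$), so that $\iota_a$ is uniformly $r\mapsto r+1$ there. This is exactly where strictness of $\mu$ enters, via $\mu_i\le\mu_1-(i-1)$.
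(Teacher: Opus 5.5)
Your proof is correct. The (2)$\Rightarrow$(3) step is the paper's argument. You induct via Lemma~\ref{lem:diagram-recursion} and use strictness to see that $\iota_a$ acts as $r\mapsto r+1$ on every index of $\operatorname{Sh}(\mu_2,\ldots,\mu_k)$. You should state that $k\le\mu_1$, since that is what gives $i\le k-1<a$. The implications (3)$\Rightarrow$(1) and (3)$\Rightarrow$(2) are read off from the definitions, as in the paper.

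Where you differ is (1)$\Rightarrow$(2).
\begin{itemize}
\item The paper argues non-inductively with the bijection $\varphi$ of Lemma~\ref{l: two diagram bijection}. Every nonempty row of $D^{\mathrm{Sp}}(z)$ must be a row $b_i$. The nonempty rows $1,\ldots,k$ of $\operatorname{Sh}(\mu)$ therefore force $b_i=i$ for $i\le k$. Row $i$ of $\MD(z)$ then coincides with row $i$ of $D^{\mathrm{Sp}}(z)$, which gives the code at once.
\item You instead peel off the first row with \eqref{eq:DSp-recursion} and run the same index bookkeeping backwards: every index in rows $2,\ldots,k$ lies in $\{2,\ldots,a+1\}=\iota_a([a])$. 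This shows $y$ is again $\mathrm{Sp}$-dominant with shape $\operatorname{Sh}(\mu_2,\ldots,\mu_k)$, and you then apply the induction hypothesis.
\end{itemize}
Your route keeps everything inside one recursive framework and makes the two directions mirror images of each other. The paper's route is shorter. It also explains directly why $\MD(z)=D^{\mathrm{Sp}}(z)$ in this case: the row relabeling $i\mapsto b_i$ in $\varphi$ is the identity on the nonempty rows.

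Two presentational points. First, you announce the cycle (1)$\Rightarrow$(3)$\Rightarrow$(2)$\Rightarrow$(1), but what you prove is (3)$\Rightarrow$(2), (2)$\Rightarrow$(3)$\Rightarrow$(1), and (1)$\Rightarrow$(2). That still yields all the equivalences, but the stated plan should match it. Second, the false start ``$i+\mu_{i+1}\le\mu_1-1+\ldots$'' should be deleted.
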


\begin{proof}
We induct on $m$. The base case when $m=0$ is immediate as both $\MD(z)$ and $D^{\mathrm{Sp}}(z)$ are empty, corresponding to
the empty strict partition.

We first prove that $(2) \implies (3)$. Let $z = (a,y)$ and $\lambda := (\mu_2, \dots, \mu_k)$. By the inductive hypothesis, 
\begin{align*}
    \MD(y) = D^{\mathrm{Sp}}(y) = \operatorname{Sh}(\lambda).
\end{align*}

We claim that the relabeling map $\iota_a$ acts by adding $1$ to both
coordinates of every cell of $\operatorname{Sh}(\lambda)$. If $ (i,j)\in\operatorname{Sh}(\lambda)$, then $1\leq i\leq\ell(\lambda)$ and $i<j\leq i+\lambda_i$. Since $\lambda$ is strict, $ \lambda_i\leq\lambda_1-i+1$, so $j\leq i+\lambda_i\leq\lambda_1+1$. Then since $ a=\mu_1>\mu_2=\lambda_1$, we have $ i\leq\ell(\lambda)\leq\lambda_1<a$. It follows that $ \iota_a(i)=i+1$ and $\iota_a(j)=j+1$ for every cell $(i,j)\in\operatorname{Sh}(\lambda)$.

Define $ T_a:=\{(1,2),(1,3),\ldots,(1,a+1)\}$, by Lemma~\ref{lem:diagram-recursion}, 
\begin{align*}
    \MD(z) &= T_a \cup \{(i+1,\iota_a(j)):(i,j)\in\MD(y)\} \\
    &= T_a \cup \{(i+1,j+1):(i,j)\in\operatorname{Sh}(\lambda)\} = \operatorname{Sh}(\mu).\\
    D^{\mathrm{Sp}}(z)
    &= T_a \cup \{(\iota_a(i),\iota_a(j)): (i,j)\in D^{\mathrm{Sp}}(y)\}\\
    &= T_a \cup \{(i+1,j+1):(i,j)\in\operatorname{Sh}(\lambda)\} = \operatorname{Sh}(\mu).
\end{align*}

We next prove that $(1) \implies (2)$. Suppose that $z$ is $\mathrm{Sp}$-dominant, then there is a strict partition $\mu=(\mu_1>\mu_2>\cdots>\mu_k>0)$ such that $D^{\mathrm{Sp}}(z)=\operatorname{Sh}(\mu)$. By Lemma~\ref{l: two diagram bijection}, every nonempty row of $D^{\mathrm{Sp}}(z)$ is a row $b_i$, and row $i$ of $\MD(z)$ has exactly the same columns as row $b_i$ of $D^{\mathrm{Sp}}(z)$. Therefore, $b_i = i$ for $ 1\leq i \leq k$ and $\code(z) = (\mu_1, \mu_2, \dots, \mu_k, 0, \dots, 0)$.

Finally, $(3) \implies (1)$ follows immediately from Definition~\ref{d : sp-dominant}.
\end{proof}

\begin{rem}
    Another application of the match diagram is its connection to extended fpf-involution pipe dreams. Hamaker, Marberg, and Pawlowski~\cite{HMP22PD} introduced these objects to combinatorially compute symplectic Schubert and symplectic Grothendieck polynomials. Similar to how the left justified Rothe diagram is a pipe dream for the same permutation, the left justified (up to the diagonal) match diagram is also an involution pipe dream for the same fixed-point-free involution.
\end{rem}

\section{The maximal inverse Hecke atom}
\label{s: paired_words and tight insertion}
In this section, we construct the unique element of $\mathcal P(z)$ that maximizes
the Rajchgot index. First, we describe $\mathcal P(z)$ recursively
using paired word insertion. We then define tight insertion and
show that it uniquely minimizes the increase in $\mathcal J$.
Iterating this construction produces $\omega(\code(z))$ and proves
Theorem~\ref{t: main_top}.

\begin{defn}
\label{def: paired_word}
A \definition{paired word} of size $m$ is a permutation in $S_{2m}$ whose one-line notation is written as a sequence of increasing consecutive pairs
\[
    w=(b_1,c_1)(b_2,c_2)\cdots(b_m,c_m),
    \qquad b_i<c_i.
\]
The parentheses record consecutive blocks in the word and do not denote
cycles.

We say that a paired word $w$ is \definition{regular} if there are integers
\[
    h_1\geq h_2\geq\cdots\geq h_m = 2
\]
such that
\[
    L_w(b_i)=h_i,
    \qquad
    L_w(c_i)=h_i-1
    \qquad(1\leq i\leq m).
\]
The integer $h_i$ is the \definition{level} of the pair $(b_i,c_i)$ and the sequence $(h_1,\ldots,h_m)$ is called the \definition{level sequence} of $w$. It is immediate from the definition that $0 \leq h_i - h_{i+1} \leq 2$.
\end{defn}

For $z \in \ifpf_{2m}$, let $\Gamma(z)$ denote its canonical cycle notation, viewed as a paired word. We can then characterize of the set of inverse Hecke atoms using $\Gamma(z)$.

\begin{defn}
The relation \(\sim_{\mathrm p}\) of paired words is generated by the following replacements of two adjacent paired factors.
\begin{equation}
\label{eq:paired-local-relation}
    (A,D)(B,C)
    \ \sim_{\mathrm p}\ 
    (B,C)(A,D)
    \ \sim_{\mathrm p}\ 
    (B,D)(A,C),
    \quad \text{for }A<B<C<D.
\end{equation}
\end{defn}
\smallskip
\begin{prop}~\cite{MP20Ktheory}*{Proposition~3.15}
\label{p:paired-class-inverse-atoms}
For $z \in \ifpf_{2m}$, its set of inverse Hecke atoms is
\begin{align*}
    \mathcal P(z)
    =\{w: w\sim_{\mathrm p}\Gamma(z)\}.
\end{align*}
\end{prop}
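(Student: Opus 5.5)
Since this is a restatement of \cite{MP20Ktheory}*{Proposition~3.15}, my plan is to translate conventions rather than reprove the result from scratch. The first step is to recall how Marberg and Pawlowski define $\mathcal B_{\mathrm{FPF}}(z)$. In their setup, $\mathcal B_{\mathrm{FPF}}(z)$ is the set of permutations $w$ (of appropriate, possibly non-minimal, length in the $K$-theoretic setting) such that the twisted Demazure action of $w$ on the base involution $1_{\mathrm{FPF}}=(1,2)(3,4)\cdots$ produces $z$. Their Proposition~3.15 then describes these atoms through the one-line notation of $w^{-1}$. That notation is a concatenation of increasing pairs $(b_1,c_1)\cdots(b_m,c_m)$, and the description says it is reachable from the canonical cycle word of $z$ by a small set of local moves. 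Since $\mathcal P(z)$ is defined as $\{w: w^{-1}\in\mathcal B_{\mathrm{FPF}}(z)\}$, the statement should follow once their moves and base word are matched with $\sim_{\mathrm p}$ and $\Gamma(z)$.

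The key steps, in order, are as follows.

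\begin{enumerate}
\item Identify their starting word with $\Gamma(z)$, namely the cycles listed in increasing order of minima, each written as (smaller, larger). The point is that the ``standard'' inverse atom is exactly the canonical cycle notation.
\item Write their generating relations on adjacent pairs and check that, for $A<B<C<D$, they are exactly the moves $(A,D)(B,C)\leftrightarrow(B,C)(A,D)\leftrightarrow(B,D)(A,C)$ of \eqref{eq:paired-local-relation}. This means confirming there is no extra move, for instance one involving $(A,B)(C,D)$, which would change $z$. It also means handling any transpose or reversal caused by their opposite conventions, in the same way as the remark that our $D^{\mathrm{Sp}}$ is the transpose of theirs.
\item Confirm that all elements of the class are again paired words, i.e.\ increasing within each block. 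This is immediate, because each local move sends increasing pairs to increasing pairs.
\end{enumerate}

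As a sanity check independent of the citation, I would verify the two inclusions directly. For invariance, each local move preserves the fixed-point-free involution obtained by reading the pairs as $2$-cycles and applying the Demazure product description. For example, the move $(A,D)(B,C)\to(B,D)(A,C)$ corresponds to the relation $s_is_{i+1}s_i\cdots$ behaving like a braid-type relation in the $0$-Hecke monoid. For connectivity, every $w\in\mathcal P(z)$ can be sorted back to $\Gamma(z)$ by moves that decrease the first position where $w$ differs from $\Gamma(z)$.

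I expect the main obstacle to be the convention bookkeeping in step 2, specifically making sure that inverse versus non-inverse and left versus right actions line up so that the relation is exactly $\sim_{\mathrm p}$. If it is not, I would look for the correct transposed form of the relation. The connectivity half of the independent check is also nontrivial, but there I would simply defer to \cite{MP20Ktheory}.
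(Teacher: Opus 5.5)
Your plan matches the paper, which gives no independent proof and imports this statement directly from Marberg--Pawlowski's Proposition~3.15, so checking that their base word and local relations translate to $\Gamma(z)$ and $\sim_{\mathrm p}$ is all that is needed. One caution about your optional sanity check: the $\sim_{\mathrm p}$-invariant is the Demazure conjugate $w\circ 1_{\mathrm{FPF}}\circ w^{-1}$, not the involution whose $2$-cycles are the pairs of $w$, because the Cross move changes the pairs (for example, $(1,4)(2,3)\sim_{\mathrm p}(2,4)(1,3)$); also, your connectivity argument should move the first disagreement with $\Gamma(z)$ later, not earlier.
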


Our goal now is to recursively construct $\PP(z)$ from $\PP(y)$ whenever $ z = (a,y)$ using the local moves in~\eqref{eq:paired-local-relation}.

\begin{defn}
\label{def:carrier-insertion}
Let $ w=(b_1,c_1)\cdots(b_m,c_m)$ be a paired word on $[2m]$. Let $0\leq a\leq 2m$ and $ q = a+2$. We now relabel $w$ by $\iota_a$ and prepend the pair $(1,q)$ to obtain the following new paired word of $[2m+2]$:
\begin{align*}
    (1,q)\iota_a(w) = (1,q)(\iota_a(b_1),\iota_a(c_1))\cdots
    (\iota_a(b_m),\iota_a(c_m)).
\end{align*}

We now generate a set of paired words from $(1,q)\iota_a(w)$ by the following relations. Let the pair containing $1$ be called the \definition{carrier} and its second entry the \definition{carrier partner}. Whenever the carrier $(1,D)$ is immediately followed by a pair $(B,C)$ with $ 1<B<C<D$, we may apply either of the \definition{carrier moves}
\begin{align*}
    \text{Swap}:\quad &(1,D)(B,C)\longmapsto(B,C)(1,D),\\
     \text{Cross} :\quad &(1,D)(B,C)\longmapsto(B,D)(1,C).
\end{align*}
After crossing, the carrier is now $(1,C)$.

Finally, let $\Ins_a(w)$ denote the set of all paired words obtained from $(1,q)\iota_a(w)$ by a finite sequence of legal carrier moves.
\end{defn}

For paired words that can be recursively constructed using $\Ins_a$, we may extract an insertion code.

\begin{defn}
\label{def:left-extraction}
Let $w$ be a paired word and consider the carrier $(1,R)$ and the pair immediately to its left $(B,C)$. Repeatedly apply the following extraction moves
\[
    \begin{cases}
        (B,C)(1,R) \longmapsto (1,R)(B,C) & \text{ if }1<B<C<R,\\
        (B,C)(1,R) \longmapsto (1,C)(B,R) & \text{ if }1<B<R<C
    \end{cases}
\]
until the carrier is the first pair. Notice that the extraction moves are exactly the inverses of the carrier moves. If at any step, $1 < R < B < C$, then we stop and declare $w$ \definition{inadmissible}. If the carrier $(1,R)$ reaches the first position, delete it, standardize the remaining paired word, record the integer $R-2$, and repeat the extraction recursively until reaching the empty word.

A paired word is \definition{admissible} if this process terminates at the empty word.  The recorded sequence is its \definition{extracted code}.
\end{defn}

\begin{lem}
\label{lem:critical-pair-invariance}
Let $w,u$ be paired words such that $ w \sim_p u$, then $w$ is admissible if and only if $u$ is admissible. Furthermore, if $w,u$ are admissible, then their extracted codes are equal.
\end{lem}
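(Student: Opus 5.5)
Since $\sim_{\mathrm p}$ is generated by the local moves \eqref{eq:paired-local-relation}, it suffices to treat the case where $u$ is obtained from $w$ by a single replacement of two adjacent pairs $(A,D)(B,C)\leftrightarrow(B,C)(A,D)\leftrightarrow(B,D)(A,C)$ with $A<B<C<D$. The argument is an induction on the size $m$. In each single-move case I will show two things: $w$ is admissible if and only if $u$ is, and the first recorded integer $R-2$ together with the standardized remainders are related by $\sim_{\mathrm p}$ (or are equal). Induction then gives the full statement.

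The first extraction step only involves the carrier $(1,R)$ and the pairs to its left. I would split according to where the two replaced pairs sit relative to the carrier.

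\textbf{Case 1: the move does not involve the carrier and lies to its right.} The extraction sequences coincide step by step, since the extraction moves never touch pairs to the right of the carrier. After the carrier is deleted, the move survives as a $\sim_{\mathrm p}$ move in the standardized remainder, because standardization is order preserving.

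\textbf{Case 2: the move lies strictly left of the carrier.} Trace the carrier as it passes through the two pairs $P_1P_2$ versus $P_1'P_2'$. This is a finite check over the relative position of the current carrier partner $R'$ among $A<B<C<D$, that is, five cases for $R'$. In each case I check three things:
\begin{itemize}
\item after passing both pairs, the carrier partner is the same in both versions;
\item the two pairs left behind (now shifted right of the carrier) are again related by a $\sim_{\mathrm p}$ move, or are identical;
\item inadmissibility, meaning some pair with $R'<B$, occurs in one version exactly when it occurs in the other.
\end{itemize}
For example, if $R'>D$, the carrier swaps past both pairs and they are unchanged. If $R'<A$, both versions are inadmissible. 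If $C<R'<D$, the carrier crosses $(A,D)$ or $(B,D)$, and the resulting new pairs need comparison.

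\textbf{Case 3: the move involves the carrier itself, i.e.\ $A=1$.} Here the three words $(1,D)(B,C)$, $(B,C)(1,D)$ and $(B,D)(1,C)$ are precisely related by carrier and extraction moves. Extraction from the latter two returns to the first form $(1,D)(B,C)$, so the extraction paths merge.

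The main obstacle is the case analysis in Case 2, especially when $R'$ lies in $(B,C)$ or $(C,D)$. There, one version performs a cross and the other a swap, and the leftover pairs must still be $\sim_{\mathrm p}$-related, with a possibly different but equal final carrier partner. I expect that some subcases require passing the carrier through both pairs before the configurations agree. If the pairwise bookkeeping fails, my fallback is to prove the stronger invariant that the extraction result is a function of the $\sim_{\mathrm p}$-class. I would do this by characterizing admissibility and the first code entry intrinsically: I expect $R-2$ to equal the number of letters less than the partner of $1$ in $\Gamma(z)$ for the corresponding $z$, via Proposition~\ref{p:paired-class-inverse-atoms} and Lemma~\ref{l: code composition bijection}.
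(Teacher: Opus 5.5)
Your overall structure matches the paper's. You reduce to one local move, note that moves to the right of the carrier commute with extraction, let a move involving the carrier merge after one extraction step, and analyse a move on two pairs left of the carrier by where the incoming partner $R'$ sits among $A<B<C<D$. Contrary to your expectation, the subcases $B<R'<C$ and $C<R'<D$ are harmless. In both, all three forms hand on the carrier with partner $D$ and leave $\sim_{\mathrm p}$-related pairs. For example, when $C<R'<D$ the leftovers are $(A,R')(B,C)$, $(B,C)(A,R')$, $(B,R')(A,C)$.

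The genuine gap is the subcase $A<R'<B$, where your third check is false. The word $(A,D)(B,C)(1,R')$ fails at once because $R'<B$. But $(B,C)(A,D)(1,R')$ and $(B,D)(A,C)(1,R')$ both extract past the two pairs to $(1,D)(B,C)(A,R')$ and survive this step. So admissibility is not synchronized round by round. Your induction on $\sim_{\mathrm p}$ cannot recover it: no local relation applies to $(B,C)(A,R')$, since these pairs neither nest nor cross, so the remainder is not $\sim_{\mathrm p}$-related to any word known to fail.

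The missing idea is an obstruction: any adjacent block $(X,Y)(U,V)$ with $U<V<X<Y$ makes a word inadmissible. The argument has three parts.
\begin{itemize}
\item A carrier arriving from the right either fails or, once it has passed, leaves a block of the same shape, namely $(X,Y)(U,V)$ or $(X,R)(U,V)$.
\item The block is untouched while the carrier lies to its left, and it survives deletion and standardization.
\item When $U$ becomes the letter $1$, the pair $(X,\cdot)$ directly to its left forces failure.
\end{itemize}
This is exactly what the paper uses for carrier partner $B$.

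Your fallback does not avoid this. The description via $\Gamma(z)$ covers only words already known to lie in some $\mathcal P(z)$, but the lemma must also handle words such as $(3,4)(1,2)$ that lie in none. Moreover, admissibility of every word in $\mathcal P(z)$ is precisely what Proposition~\ref{p:Ins-factorization} deduces from this lemma, so invoking it would be circular.
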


\begin{proof}
It is enough to consider $w,u$ that differ by one local relation \eqref{eq:paired-local-relation}.  If the relation involves the pair containing
$1$, then the three possible configurations are
\begin{align*}
    (1,D)(B,C),\qquad (B,C)(1,D),\qquad (B,D)(1,C), \qquad 1<B<C<D.
\end{align*}
The latter two extract in one step to the first, so all three have the same admissibility and 
extracted code.

A local relation~\eqref{eq:paired-local-relation} whose support is disjoint from the next extraction move commutes with that extraction move. Hence the only nontrivial overlap occurs when the local relation~\eqref{eq:paired-local-relation} involves the two pairs immediately to the left of the carrier. Let $1<A<B<C<D<E$. When the carrier partner is $E$, $D$, or $C$, the table below shows that any local relation~\eqref{eq:paired-local-relation} commutes with the extraction move. Each arrow records the result of moving the carrier left across the two displayed pairs.
\[
\resizebox{\textwidth}{!}{$
\renewcommand{\arraystretch}{1.35}
\begin{array}{c|c|c|c}
\text{CP}
&\text{first representative}
&\text{second representative}
&\text{third representative}\\ \hline
E&
(A,D)(B,C)(1,E)\mapsto(1,E)(A,D)(B,C)&
(B,C)(A,D)(1,E)\mapsto(1,E)(B,C)(A,D)&
(B,D)(A,C)(1,E)\mapsto(1,E)(B,D)(A,C)\\
D&
(A,E)(B,C)(1,D)\mapsto(1,E)(A,D)(B,C)&
(B,C)(A,E)(1,D)\mapsto(1,E)(B,C)(A,D)&
(B,E)(A,C)(1,D)\mapsto(1,E)(B,D)(A,C)\\
C&
(A,E)(B,D)(1,C)\mapsto(1,E)(A,D)(B,C)&
(B,D)(A,E)(1,C)\mapsto(1,E)(B,D)(A,C)&
(B,E)(A,D)(1,C)\mapsto(1,E)(B,D)(A,C)
\end{array}$}
\]
In every row the first extracted pair is $(1,E)$, and after deleting it the
three suffixes are equivalent under $\sim_p$. 

It remains to consider the two smaller possible partners. If the carrier partner is $A$, then extraction fails immediately. And if the carrier partner is $B$, then the first extraction only succeeds for the configuration $(C,D)(A,E)(1,B)$ and $(C,E)(A,D)(1,B)$. In either case, after the first extraction, the remaining word includes $(C,D)(A,B)$, which is inadmissible. 

These local checks show that any local relation~\eqref{eq:paired-local-relation} commutes with the extraction moves. So inadmissibility and the extracted code are also preserved by the local relations~\eqref{eq:paired-local-relation}.
\end{proof}

\begin{prop}
\label{p:Ins-factorization}
A paired word $w$ is admissible with extracted code $\code(z)$ if
and only if $ w \in \PP(z)$. Furthermore, if $z=(a,y)$, then
\begin{equation*}
    \PP(z)=\bigcup_{w\in\PP(y)}\Ins_a(w).
\end{equation*}
\end{prop}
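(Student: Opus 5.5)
We prove both statements together by induction on $m$, with the case $m=0$ trivial. The key observation is that $\Gamma(z)$ is itself admissible with extracted code $\code(z)$. Write $z=(a,y)$. By the recursion $z=(1,a+2)\iota_a(y)$ we have
\[
\Gamma(z)=(1,a+2)\,\iota_a(\Gamma(y)),
\]
so the carrier is already in the first position. Extraction therefore deletes it at once, records $a$, and leaves the standardization of $\iota_a(\Gamma(y))$, which is $\Gamma(y)$. By induction $\Gamma(y)$ is admissible with extracted code $\code(y)$, so $\Gamma(z)$ is admissible with extracted code $(a,\code(y))=\code(z)$.

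For the forward direction, suppose $w\in\PP(z)$. By Proposition~\ref{p:paired-class-inverse-atoms} we have $w\sim_p\Gamma(z)$, and Lemma~\ref{lem:critical-pair-invariance} then says $w$ is admissible with the same extracted code $\code(z)$.

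For the converse and the factorization, first show $\Ins_a(w)\subseteq\PP(z)$ for every $w\in\PP(y)$. Since $w\sim_p\Gamma(y)$ and $\iota_a$ is order preserving, each local relation \eqref{eq:paired-local-relation} transports under $\iota_a$. Hence
\[
(1,a+2)\,\iota_a(w)\ \sim_p\ (1,a+2)\,\iota_a(\Gamma(y))=\Gamma(z).
\]
Swap and Cross are exactly two of the relations \eqref{eq:paired-local-relation} with $A=1$, so every element of $\Ins_a(w)$ is $\sim_p$-equivalent to $\Gamma(z)$.

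Next, let $u$ be admissible with extracted code $\code(z)=(a,\code(y))$. Run the extraction moves until the carrier reaches the front; the carrier is then $(1,R)$ with $R-2=a$, so $R=a+2$. The remaining word $u'$ standardizes to an admissible word $w'$ with extracted code $\code(y)$, so $w'\in\PP(y)$ by induction. The labels other than $1$ and $a+2$ in $u'$ are exactly the image of $\iota_a$, so $u'=\iota_a(w')$. Each extraction move is the inverse of a carrier move, and carrier moves are legal precisely under the inequalities checked during extraction. Replaying the extraction backwards therefore shows $u\in\Ins_a(w')$.

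Combining the pieces gives
\[
\{\text{admissible words with code }\code(z)\}\subseteq\bigcup_{w\in\PP(y)}\Ins_a(w)\subseteq\PP(z)\subseteq\{\text{admissible words with code }\code(z)\},
\]
which yields both claims at once.

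The main obstacle is the replay step. One must check that running extraction backwards from $(1,a+2)\iota_a(w')$ produces exactly a legal sequence of Swap and Cross moves. In particular, the inverse of the Cross-type extraction $(B,C)(1,R)\mapsto(1,C)(B,R)$ with $B<R<C$ must be the carrier move $(1,C)(B,R)\mapsto(B,C)(1,R)$, and this requires $1<B<R<C$, which matches. Once the correspondence between these two case conditions is verified, the remaining steps are routine.
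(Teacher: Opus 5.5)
Your proposal is correct and follows the paper's proof essentially step for step. You show $\Gamma(z)=(1,a+2)\,\iota_a(\Gamma(y))$ is admissible by induction, then use Proposition~\ref{p:paired-class-inverse-atoms} and Lemma~\ref{lem:critical-pair-invariance} for the forward direction, and reverse the first extraction as carrier moves, each of which is a local relation, for the converse. Your chain of inclusions only spells out the containment $\bigcup_{w\in\mathcal P(y)}\Ins_a(w)\subseteq\mathcal P(z)$ a bit more explicitly than the paper does.
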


\begin{proof}
Since $z=(a,y)$, $\Gamma(z)=(1,a+2)\,\iota_a(\Gamma(y))$. Then $\Gamma(z)$ is admissible with extracted code $\code(z)$ by induction. By Proposition~\ref{p:paired-class-inverse-atoms} and Lemma~\ref{lem:critical-pair-invariance}, every word in $\PP(z)$ is admissible with extracted code $\code(z)$.

Conversely, let $u$ be admissible with extracted code $\code(z)=(a,\code(y))$.  Its first extraction produces a paired word $w$ whose extracted code is $\code(y)$. By induction, $w\in\PP(y)$. Reversing the first extraction sequence is a sequence of carrier moves, so $u\in\Ins_a(w)$. Every carrier move is a local relation~\eqref{eq:paired-local-relation}, therefore
\begin{align*}
    u \sim_{p}(1,a+2)\iota_a(w) \sim_{p}(1,a+2)\iota_a(\Gamma(y)) =\Gamma(z).
\end{align*}
Hence $u\in\PP(z)$ by Proposition~\ref{p:paired-class-inverse-atoms}.
\end{proof}

We now show that $\Ins_a$ weakly increases the length of longest increasing subsequences starting at each letter.

\begin{lem}
\label{lem:insertion-lower-bounds}
Let $w$ be a paired word and $u \in \Ins_a(w)$. Then for every letter $r$ appearing in $w$,
\begin{equation}
\label{eq:old-letter-lower-bound}
    L_u(\iota_a(r))\geq L_w(r).
\end{equation}
Furthermore, let $q = a+2$, then the two new letters $1$ and $q$ satisfy 
\begin{equation}
\label{eq:new-letter-lower-bound}
    L_u(q)\geq\rho_w(a)+1 \, \text{ and }\, L_u(1)\geq\rho_w(a)+2.
\end{equation}
As a consequence,
\begin{equation}
\label{eq:J-universal-lower}
    \mathcal J(u)\geq \mathcal J(w)+2\rho_w(a)+3.
\end{equation}
\end{lem}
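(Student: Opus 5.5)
We prove this by induction on the number of carrier moves used to produce $u$ from $v_0:=(1,q)\iota_a(w)$. We carry along three invariants. Every old letter $\iota_a(r)$ satisfies \eqref{eq:old-letter-lower-bound}. The new letter $q$ satisfies $L(q)\geq\rho_w(a)+1$. The current carrier partner $R$ satisfies $L(R)\geq \rho_w(a)+1$. The bound for the letter $1$ then follows at the end: $1$ is the smallest letter and precedes its partner $R$, so $L_u(1)\geq L_u(R)+1\geq\rho_w(a)+2$. Summing \eqref{eq:old-letter-lower-bound} over the $2m$ old letters and adding the two bounds in \eqref{eq:new-letter-lower-bound} gives \eqref{eq:J-universal-lower}.

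\emph{Base case $u=v_0$.} The relabeling $\iota_a$ is strictly increasing and keeps the relative positions of the old letters. Hence any increasing subsequence of $w$ starting at $r$ maps to one of $v_0$ starting at $\iota_a(r)$, which gives \eqref{eq:old-letter-lower-bound}. The letters of $\iota_a(w)$ that exceed $q=a+2$ are exactly the $\iota_a(r)$ with $r>a$. Therefore prefixing $q$ to a longest increasing subsequence of $\iota_a(w|_{>a})$ gives $L_{v_0}(q)\geq \rho_w(a)+1$. Since $q$ is also the carrier partner in $v_0$, this gives the partner invariant as well.

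\emph{Inductive step.} Consider one carrier move on an adjacent block $(1,D)(B,C)$ with $1<B<C<D$. Letters outside the block keep their relative order, both within the prefix and within the suffix.
\begin{itemize}
\item[(i)] \emph{Chains avoiding $1$.} Take an increasing subsequence that starts at some letter $x\neq1$ and does not use $1$. It meets the block in $\emptyset$, $\{B\}$, $\{C\}$, $\{D\}$ or $\{B,C\}$; the pair $\{B,D\}$ cannot occur, since $D$ precedes $B$. After a Swap the block reads $B,C,1,D$, and after a Cross it reads $B,D,1,C$. In both cases each of these intersections still appears in increasing order, and every block letter still precedes the entire suffix. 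So every such subsequence survives the move, and $L(x)$ does not decrease for any $x\neq 1$. This covers both the old letters and $q$.
\item[(ii)] \emph{Chains through $1$.} Since $1$ is the smallest letter, a chain can contain $1$ only as its first letter. So $L(1)$ is the only value that may drop, and it is exactly the value we do not track directly.
\item[(iii)] \emph{Carrier partner.} After a Swap the partner is still $D$, and (i) shows $L(D)$ does not decrease. After a Cross the new partner is $C$. Before the move, a longest chain from $D$ continues only into the suffix, because $B,C<D$; it has the form $D,s_2,\ldots,s_k$ with each $s_i>D>C$. After the move $C$ still precedes the suffix, so $C,s_2,\ldots,s_k$ is an increasing subsequence. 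Hence $L(C)\geq L(D)\geq\rho_w(a)+1$.
\end{itemize}

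The main obstacle is that $L(1)$ is genuinely not monotone under a Swap. For example, $1,B,C$ may be a chain before the move but not after it. This is why the bound for $1$ is routed through the carrier partner rather than obtained by direct monotonicity. Apart from this, the remaining work is the short case check of block intersections in (i).
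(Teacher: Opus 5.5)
Your proof is correct and follows essentially the paper's argument. It inducts on carrier moves, shows that increasing subsequences avoiding the letter $1$ survive each move, and bounds $L_u(1)$ through the current carrier partner, which after a Cross is a smaller letter that still precedes the suffix. The only cosmetic differences are these. You track the numerical invariant $L(R)\geq\rho_w(a)+1$, whereas the paper tracks the explicit chain $(1,R,\iota_a(v_1),\dots,\iota_a(v_\ell))$. You also treat $q$ and the old letters uniformly via your block-intersection check, while the paper handles them separately.
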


\begin{proof}
We prove by induction on carrier moves. The word $ u' = (1,q)\iota_a(w)$ satisfies~\eqref{eq:old-letter-lower-bound} as $\iota_a$ does not change relative order. Let $\ell = \rho_w(a)$ and let $v = (v_1, \dots, v_\ell)$ denote a longest increasing subsequence of $w|_{>a}$, then $\iota_a(v)$ is a longest increasing subsequence of $u'|_{>q}$. Then $u'$ also satisfies~\eqref{eq:new-letter-lower-bound} as $(1,q,\iota_a(v_1), \dots, \iota_a(v_\ell))$ is an increasing subsequence in $u'$. We now want to show that a single carrier move 
\begin{align*}
    &\text{Swap}:(1,D)(B,C)\mapsto(B,C)(1,D)\\
    \text{or }\quad &\text{Cross}:(1,D)(B,C)\longmapsto(B,D)(1,C),\quad \text{ for }1<B<C<D.
\end{align*}
preserves~\eqref{eq:old-letter-lower-bound} and~\eqref{eq:new-letter-lower-bound}. 

We first show that any increasing sequence not using $1$ or $q$ is preserved. If $D \neq q$, 
then changing the relative order from $D,B,C$ to $B,C,D$ or $B,D,C$ preserves every increasing subsequence using only old letters. If $D = q$, preservation of old subsequences is immediate, so~\eqref{eq:old-letter-lower-bound} is preserved.

The increasing sequence $(q,\iota_a(v_1), \dots, \iota_a(v_\ell))$ remains an increasing subsequence after any carrier moves since none of the moves can put $q$ after a larger number, so $L_u(q)\geq\rho_w(a)+1$. Let $R$ be the carrier partner. We claim that $(1,R,\iota_a(v_1), \dots, \iota_a(v_\ell))$ is always an increasing subsequence of $u$. We prove by induction. The base case when $R = q$ follows from above. The carrier partner changes every time we apply Cross, in which the new carrier partner $R'$ is always less than the old carrier partner $R$. So $(1,R',\iota_a(v_1), \dots, \iota_a(v_\ell))$ is again an increasing subsequence of $u$. Hence~\eqref{eq:new-letter-lower-bound} is preserved.

Finally,~\eqref{eq:J-universal-lower} follows from summing over~\eqref{eq:old-letter-lower-bound} and~\eqref{eq:new-letter-lower-bound} across all letters.
\end{proof}

When $w$ is regular, we can pick out a specific element in $\Ins_a(w)$ such that all the inequalities in Lemma~\ref{lem:insertion-lower-bounds} are equalities.

\begin{defn}
\label{d: tight insertion}
Let $w$ be a regular paired word with level sequence $h_1\geq\cdots\geq h_m$ and let $H=\rho_w(a)+2$. Define the \definition{tight insertion $\TIns_a(w)$} as the element in $\Ins_a(w)$ obtained by scanning the old pairs from left to right and applying
\[
\begin{cases}
   \text{Swap}:(1,D)(B,C)\longmapsto(B,C)(1,D) & \text{if } H < h_i, \\
   \text{Cross}:(1,D)(B,C)\longmapsto(B,D)(1,C) & \text{if } H = h_i,\\
   \text{Stop}& \text{if } H > h_i.
\end{cases}
\]
\end{defn}

A priori, it is unclear that $\TIns_a$ produces legal carrier moves at every step, so we first show that $\TIns_a$ is well defined.

\begin{lem}
    \label{l: TIns-well-define}
    Let $w=(b_1,c_1)\cdots(b_m,c_m)$ be regular with level sequence $h_1\geq\cdots\geq h_m$. Then $ u = \TIns_a(w)$ is well-defined and no further carrier move is legal after the procedure terminates.
\end{lem}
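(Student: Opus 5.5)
We track the carrier through the left-to-right scan and check legality of each move from the regularity of $w$. Write $u' = (1,q)\iota_a(w)$ with $q=a+2$, and set $H=\rho_w(a)+2$. Since relabeling by $\iota_a$ preserves relative order, the old pairs keep their levels $h_i$ inside $u'$. We prove by induction on the step $i$ the following invariant: when the carrier $(1,R)$ sits immediately before the old pair $(\iota_a(b_i),\iota_a(c_i))$, we have $L(R) \ge H-1$ (and $L(R)=H-1$ after a Cross), computed in the current word restricted to the letters to the right of the carrier. Moreover, every $L$-value of a letter to the right is unchanged from its value in $\iota_a(w)$.

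For the base case, $R=q$. A longest increasing subsequence starting at $q$ in $u'$ is $q$ followed by a longest increasing subsequence of $\iota_a(w)|_{>q}$, so $L_{u'}(q)=\rho_w(a)+1=H-1$.

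For the inductive step, consider the next pair $(B,C)$ of level $h_i$, so that $L(B)=h_i$ and $L(C)=h_i-1$. A move is legal exactly when $B<C<R$. We split into the three cases of Definition~\ref{d: tight insertion}.

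\textbf{Case $H<h_i$ (Swap).} Here $L(C)=h_i-1\ge H > L(R)$, using that $R$ is not yet crossed. We must show $C<R$. Suppose instead $R<C$. Then $R$ followed by an increasing sequence through $C$ would give $L(R)\ge L(C)+1$, which contradicts the inequality above. Hence $C<R$, and since $B<C$ the move is legal.

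\textbf{Case $H=h_i$ (Cross).} Now $L(C)=H-1$. We need $C<R$. If $R<C$, then $L(R)\ge H$; we must rule this out using the definition of $\rho$ together with the fact that all letters $>q$ have $L$-value at most $\rho_w(a)=H-2$ in $w$. The main point is that $R$ either equals $q$ or comes from an earlier Cross, where $R$ was the $c$ of a pair of level $H$. Two letters with the same $L$-value that appear in left-to-right order must decrease, since the $L$-values of an increasing pair must strictly drop. This gives $C<R$. After the Cross the new carrier partner is $C$, with $L(C)=H-1$, and the invariant is restored.

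\textbf{Case $H>h_i$ (Stop).} We must show no move is legal, i.e.\ that $R<B$ or $B>R$ in the appropriate sense. Here I expect the main obstacle: showing that $R$ is smaller than $B$. We use the level bound $h_i\le H-1$ and the regular structure (levels weakly decrease with $h_m=2$). We argue that $R$ has $L$-value $H-1>h_i-1$. If $B<R$, we want to derive a contradiction: either $R$ is smaller than every subsequent letter of level at least $H-1$, or we use that $R$ either equals $q$, with everything greater than $q$ having $L\le H-2$, or is some $c_j$ of level $H$. In the latter case regularity forces $b_j$ to have $L$-value $H$, and we compare against the pair $(B,C)$ via an increasing chain $1,R,\dots$ to show $R<B$. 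Concretely, the pairs after position $i$ all have levels $\le h_i<H$. If $R>C$, we would still need to exclude the possibility that $R$ exceeds $B$; since it is a maximal-$L$ letter among those later letters that lie below it, this is the hard verification. Since the levels $h_{i'}$ for $i'\ge i$ are all $<H$, the same argument applies to any later pair, though only the adjacent pair matters for legality.

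Finally, if the scan passes every pair, the carrier is last and no move exists. Hence $\TIns_a(w)$ is well defined and terminal.
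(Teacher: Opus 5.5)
The Swap and Cross steps are fine and match the paper's argument. For $R=q$, every letter above $q$ is $\iota_a$ of a letter above $a$, so $L(q)\le\rho_w(a)+1=H-1$, while $q<C$ would force $L(q)\ge L(C)+1\ge H$. For $R=\iota_a(c_{i-1})$ coming from an earlier Cross, two letters with $L_w$-value $H-1$ appearing left to right must decrease. (What you actually use is the upper bound $L(q)\le H-1$, not the lower bound stated in your invariant.)

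\textbf{The gap is the Stop case}, which is the second half of the lemma.

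A carrier move past $(B,C)$ is legal exactly when $C<R$, so what must be shown is $R<C$. You instead aim for $R<B$, which is false in general. Take $w=(1,3)(2,4)$, which is regular with level sequence $(3,2)$, and $a=3$. Then $\rho_w(3)=1$ and $H=3$, and tight insertion gives $(1,5)(2,4)(3,6)\mapsto(2,5)(1,4)(3,6)$. The final carrier $(1,4)$ faces $(3,6)$ with $B=3<R=4<C=6$, so $R<B$ fails even though no move is legal.

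Your remark that when $R>C$ one must still exclude $R>B$ has the logic reversed. $R>C$ already makes a move legal, and that is exactly what must be ruled out. Moreover, the values $L(R)=H-1$, $L(B)=h_{t+1}$, $L(C)=h_{t+1}-1$ cannot by themselves exclude nesting. A larger letter standing to the left of $B$ and $C$ places no constraint on their $L$-values.

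\textbf{The missing idea} is to use $R\le q$ together with a witness letter for $\rho_w(a)$.
\begin{itemize}
\item Let $t$ be the last index with $h_t\ge H$. If $t=m$ there is nothing to prove; otherwise $2\le h_{t+1}<H$ gives $\rho_w(a)\ge1$.
\item Each Cross strictly lowers the carrier partner, so the final partner satisfies $R\le q=a+2$.
\item If the carrier nested pair $t+1$, then $\iota_a(c_{t+1})<R\le a+2$ forces $c_{t+1}\le a$.
\item Pick $r>a$ with $L_w(r)=\rho_w(a)=H-2$. The pair containing $r$ has level $H-2$ (if $r$ is a first entry) or $H-1$ (if a second entry). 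So it lies strictly after pair $t$. It is not pair $t+1$, since both entries there are at most $a$.
\item Hence $r$ lies to the right of $c_{t+1}$ and $b_{t+1}<c_{t+1}\le a<r$. This gives $L_w(b_{t+1})\ge 2+(H-2)=H>h_{t+1}$, a contradiction.
\end{itemize}
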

\begin{proof}
 Let $R$ denote the carrier partner. In order for the carrier moves to be legal, we want to show that the carrier $(1,R)$ nests the pair to its right $(\iota_a(b_i), \iota_a(c_i))$ whenever $h_i \geq H$.

When $h_i > H$, the carrier partner is $q = a+2$. If $\iota_a(c_i)>q$, then $c_i > a$, so every increasing subsequence in $w$ beginning with $c_i$ has length at most $\rho_w(a) = H-2$. This is a contradiction to 
\[
    L_w(c_i)=h_i-1\geq H-1.
\]
Hence $\iota_a(c_i)<q$, so $(1,q)$ nests the pair $(\iota_a(b_i), \iota_a(c_i))$.

When $h_i = H$, the carrier partner is either $q$ or $\iota_a(c_{i-1})$. If $R = q$, then by the same argument above, $(1,q)$ nests the pair $(\iota_a(b_i), \iota_a(c_i))$. Now if $R = \iota_a(c_{i-1})$, then $h_{i-1} = H$. Suppose that $\iota_a(c_{i-1}) < \iota_a(c_{i})$, then 
\[
       L_w(c_{i-1})\geq 1+L_w(c_i) = h_i = H,
\]
which contradicts $L_w(c_{i-1}) = h_{i-1}-1 = H-1$. So $\iota_a(c_{i-1}) > \iota_a(c_{i})$, and $(1, R)$ nests the pair $(\iota_a(b_i), \iota_a(c_i))$.

Finally, we want to show that no additional legal moves can be made. Let $t = \max(\{i: h_i \geq H\} \cup \{0\})$. If $t = m$, there is nothing to prove. Suppose $ t<m$, and let \((1,R)\) be the final carrier. Then either $R=q=a+2$ or $R=\iota_a(c_t)$. In the second case, the carrier partner changing implies that $q > \iota_a(c_t)$. Thus in both cases, $R \leq q$. Assume for contradiction that the carrier nests the next pair, i.e. $\iota_a(c_{t+1}) < R$ and $c_{t+1}\leq a$. Choose a letter \(r>a\) such that $L_w(r)=\rho_w(a)=H-2$. If \(r\) is the first entry of its pair, that pair has level \(H-2\); if \(r\) is the second entry, its pair has level \(H-1\). In either case its pair occurs strictly after pair \(t\). It cannot occur in pair \(t+1\), since both entries of that pair are at most \(a\). Hence \(r\) occurs strictly after \(c_{t+1}\). Therefore $b_{t+1}<c_{t+1}\leq a<r$ and adjoining \(b_{t+1},c_{t+1}\) to a longest increasing subsequence beginning at \(r\) gives an increasing subsequence of length $2+(H-2)=H$ starting at \(b_{t+1}\). This contradicts $L_w(b_{t+1})=h_{t+1}<H$. Thus the final carrier cannot nest the next pair and no more carrier moves can be made.
\end{proof}

\begin{lem}
\label{lem:tight-insertion-exact}
Let $w$ be  a regular paired word with level sequence $h_1\geq\cdots\geq h_m$. Let $ H=\rho_w(a)+2$, $q = a+2$, and $u=\TIns_a(w)$, then the following hold
\begin{itemize}
    \item $L_u(\iota_a(r))=L_w(r)$ for all letters $r$ in $w$,
    \item $L_u(1)=H$,
    \item $L_u(q)=H-1$, and
    \item $\mathcal J(u)=\mathcal J(w)+2\rho_w(a)+3.$
\end{itemize}
Furthermore, $u$ is regular, and its level sequence is obtained by inserting
one additional part $H$ into the weakly decreasing level sequence of $w$.
\end{lem}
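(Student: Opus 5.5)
The plan is to prove matching upper bounds for the lower bounds in Lemma~\ref{lem:insertion-lower-bounds}, using a potential function on the letters of $u$. First I will write $u=\TIns_a(w)$ explicitly. Let $s=|\{i:h_i>H\}|$ and $t=|\{i:h_i\geq H\}|$; since the level sequence is weakly decreasing, pairs $s+1,\dots,t$ are exactly the pairs of level $H$. By Definition~\ref{d: tight insertion} and Lemma~\ref{l: TIns-well-define}, the carrier swaps past pairs $1,\dots,s$ and crosses pairs $s+1,\dots,t$, so that
\[
u=\iota_a\bigl((b_1,c_1)\cdots(b_s,c_s)\bigr)\,(\iota_a b_{s+1},q)(\iota_a b_{s+2},\iota_a c_{s+1})\cdots(\iota_a b_t,\iota_a c_{t-1})(1,\iota_a c_t)\,\iota_a\bigl((b_{t+1},c_{t+1})\cdots\bigr).
\]
If $s=t$, the carrier $(1,q)$ simply sits between pair $s$ and pair $s+1$.

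Next I define $f$ on the letters of $u$ by $f(\iota_a r)=L_w(r)$, $f(1)=H$ and $f(q)=H-1$. The key claim is that whenever $x<y$ and $x$ precedes $y$ in $u$, we have $f(x)\geq f(y)+1$. Since $f\geq 1$, induction from the right end of $u$ then gives $L_u(x)\leq f(x)$ for every letter $x$. Combined with Lemma~\ref{lem:insertion-lower-bounds}, this yields the first three equalities. The formula for $\mathcal J$ follows by summing: the new letters contribute $H+(H-1)=2\rho_w(a)+3$.

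I will check the claim by cases on which letters are involved.

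\textbf{Two old letters whose relative order is unchanged from $w$.} The claim holds because $L_w$ satisfies the same inequality in $w$.

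\textbf{Two old letters whose relative order changed.} The only such changes are that $\iota_a c_{i-1}$ now follows $\iota_a b_i$ for $s+1<i\leq t$. Both letters lie in pairs of level $H$, so $f(\iota_a b_i)=H=f(\iota_a c_{i-1})+1$, as required. Note that $c_{i-1}$ still precedes $c_i$ and every later letter, exactly as in $w$.

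\textbf{The letter $1$.} It is smaller than everything after it. The letters after it are $\iota_a c_t$, of level $H-1$, and letters from pairs of level $<H$, where $f\leq h_i-0\leq H-1$. In the case $s=t$, the letters after $1$ lie in pairs of level $<H$ together with $q$, and $f(q)=H-1$.

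\textbf{The letter $q$, looking right.} Any larger letter after $q$ is $\iota_a r$ with $r>a$, so $f\leq\rho_w(a)=H-2$.

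\textbf{The letter $q$, looking left.} The smaller letters before $q$ come from pairs $1,\dots,s$ and from $\iota_a b_{s+1}$. These have $f\geq H$: first entries have $f=h_i\geq H$, second entries of pairs with $h_i>H$ have $f=h_i-1\geq H$, and $\iota_a b_{s+1}$ has $f=H$.

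I expect the main obstacle to be this bookkeeping around $q$ and the shifted second entries $\iota_a c_{i-1}$, particularly keeping the $s=t$ case straight. The legality facts from Lemma~\ref{l: TIns-well-define}, namely that $c_i\leq a$ whenever $h_i\geq H$ and that $c_{i-1}>c_i$ within level $H$, are exactly what this step uses.

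Finally, regularity follows from reading off the levels from the displayed form of $u$. Each new or rearranged pair, namely $(\iota_a b_{s+1},q)$, $(\iota_a b_i,\iota_a c_{i-1})$ and $(1,\iota_a c_t)$ (or $(1,q)$ when $s=t$), has levels $(H,H-1)$. The unchanged pairs keep their levels. Hence the level sequence of $u$ is that of $w$ with one additional part $H$ inserted in weakly decreasing position. The last level is still $2$: either the last pair is an old pair of level $h_m=2$, or the carrier sits at the end, which forces $t=m$, so that $H\leq h_m=2$ and hence $H=2$.
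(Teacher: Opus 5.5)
Your proof is correct, and it takes a genuinely different route from the paper's. The paper also writes $u$ in the explicit three-block form. It then bounds $L_u$ directly, block by block from right to left:
\begin{itemize}
\item the third block is an unchanged suffix;
\item in the second block it uses that $q>\iota_a(c_{s+1})>\cdots>\iota_a(c_t)$ and that the level-$H$ first entries decrease;
\item in the first block it runs two separate right-to-left inductions, using that equal-level first entries (resp.\ second entries) appear in decreasing order, to control the second letter of a longest increasing subsequence.
\end{itemize}
Each such upper bound is then matched with Lemma~\ref{lem:insertion-lower-bounds}.

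You instead put all the upper bounds into a single certificate. Any $f\geq 1$ that drops by at least one along every increasing pair dominates $L_u$. Choosing $f$ to be exactly the lower bounds of Lemma~\ref{lem:insertion-lower-bounds}, you only need to check two kinds of pairs: those whose relative order changed (the pairs $\iota_a(b_i),\iota_a(c_{i-1})$), and those involving $1$ or $q$. Every other increasing pair inherits the inequality from $L_w$ itself.

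This gives a shorter and more uniform argument. It avoids the equal-level monotonicity facts entirely; the legality facts are needed only to write down $u$. Your last paragraph also checks explicitly that the final level is still $2$, which the paper leaves as ``one can check''. The paper's route needs no auxiliary gadget, but it pays for that with several ad hoc structural observations about the ordering of equal-level letters.

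One small wording slip: when $s=t$, the letters preceding $q$ are the first block and the letter $1$, not $\iota_a(b_{s+1})$. The comparison of $1$ with $q$ is already handled in your case for the letter $1$, so nothing is lost.
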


\begin{proof} 
Let $w = (b_1, c_1) \cdots (b_m,c_m)$ and let
\begin{align*}
    s=\max\{i:h_i>H\}, \qquad t=\max\{i:h_i\geq H\},
\end{align*}
with either maximum interpreted as $0$ when the corresponding set is empty. When \(s<t\), applying the tight insertion yields
\begin{align*}
     u = &(\iota_a(b_1),\iota_a(c_1)) \cdots (\iota_a(b_s),\iota_a(c_s))\\
    &(\iota_a(b_{s+1}),q)(\iota_a(b_{s+2}),\iota_a(c_{s+1}))\cdots (\iota_a(b_t),\iota_a(c_{t-1}))(1,\iota_a(c_t))\\
    &(\iota_a(b_{t+1}), \iota_a(c_{t+1})) \cdots (\iota_a(b_m),\iota_a(c_m)).
\end{align*}
 When \(s=t\), replace the middle block by \((1,q)\). We refer to the first $s$ pairs as the first block, the next $t-s+1$ pairs as the second block, and the remaining $m-t$ pairs as the third block, corresponding to the rows in the presentation above.

Then the carrier moves of $\TIns_a(w)$ can be characterized as follows: 
In the first block, $h_i > H$, so we applied Swap. In the second block, $h_i = H$, so we applied Cross. In the third block, $h_i < H$, we applied nothing.

\textbf{Third block.} Pairs in the third block have exactly the same suffix as before, so their starting-LIS lengths are unchanged. In addition, for any letter $\iota_a(r)$ in the third block, $L_u(\iota_a(r)) = L_w(r) < H$.

\textbf{Second block.} In the second block, if $t>s$, then the second entries satisfy
\[
    q>\iota_a(c_{s+1})>\iota_a(c_{s+2})>\cdots>\iota_a(c_t).
\]
Since $q$ is the largest number in the second block, its increasing subsequence can only use letters greater than $q$ which are all in the third block, so $L_u(q)= \rho_w(a) + 1 = H-1$. For each $\iota_a(c_i)$ in the second block, every letter to its right in the second block is smaller than it, so $L_u(\iota_a(c_i))\leq L_w(c_i)=H-1$.
By Lemma~\ref{lem:insertion-lower-bounds}, $L_u(\iota_a(c_i)) \geq L_w(c_i) = h_i - 1 =  H-1$, so equality follows.

For $s <  i<j\leq t$, we have $b_i>b_j$. Otherwise $b_i<b_j$, and since $b_j$ occurs later with $L_w(b_j) = H$, an increasing subsequence beginning $b_i,b_j$ would have length at least $H+1$, contradicting $L_w(b_i)=H$. So the longest increasing subsequence in $u$ starting with $\iota_a(b_i)$ cannot use any other $b_j$ in the second block. Therefore, the second entry in this LIS has starting-LIS of length at most $H-1$, so $L_u(\iota_a(b_i)) \leq H$. Then by Lemma~\ref{lem:insertion-lower-bounds}, $L_u(\iota_a(b_i))\geq L_w(b_i) = h_i = H$, so equality follows.

The final letter $1$ in the second block has $L_u(1) = H$ as the carrier partner is $\iota_a(c_t)$ with $L_u(\iota_a(c_t)) = H-1$. $L_u(1)$ cannot be any larger since every letter $r$ to the right of $1$ has $L_u(r) \leq H-1$.

If $s = t$, then the second block is simply $(1,q)$. By the same arguments as above, $L_u(q) = H-1$ and $L_u(1) \geq 1 + L_u(q) = H$. $L_u(1)$ cannot be any larger since every letter $r$ to the right of $1$ has $L_u(r) \leq H-1$.

\textbf{First block.} It remains to check the LIS starting at each letter in the first block. We first induct on $\iota_a(c_1), \dots, \iota_a(c_s)$ from right to left. For $1 \leq i < j \leq s$, if $h_i = h_j$, then $c_i > c_j$. Suppose not, then $c_j$ is both larger than $c_i$ and to the right of $c_i$, so $h_i - 1 =  L_w(c_i) > L_w(c_j) = h_j-1$, which is a contradiction. Therefore any longest increasing subsequence in $u$ starting with $\iota_a(c_i)$ must have second letter $\iota_a(r)$ such that $L_u(\iota_a(r)) = L_w(r) <  h_i - 1$, so $L_u(\iota_a(c_i)) = L_u(\iota_a(r)) + 1 = L_w(r) + 1 \leq h_i - 1$.  Then by Lemma~\ref{lem:insertion-lower-bounds}, $L_u(\iota_a(c_i))\geq L_w(c_i) = h_i -1 \geq H$, so equality follows. 

We now induct on $\iota_a(b_1), \dots, \iota_a(b_s)$ from right to left. By a similar argument, for $1 \leq i < j \leq s$, if $h_i = h_j$, then $b_i > b_j$. So any longest increasing subsequence in $u$ starting with $\iota_a(b_i)$ must have second letter $\iota_a(r)$ with $L_u(\iota_a(r)) =  L_w(r) <  h_i$, so $L_u(\iota_a(b_i)) = L_u(\iota_a(r)) + 1 = L_w(r) + 1 \leq h_i$. Then by Lemma~\ref{lem:insertion-lower-bounds}, $L_u(\iota_a(b_i))\geq L_w(b_i) = h_i$, so equality follows.

Finally, since $L_u(1)=H$, $L_u(q)=H-1$, and $L_u(\iota_a(r))=L_w(r)$ for all letters $r$ in $w$, summing over all letters in $u$ gives 
\begin{align*}
    \mathcal J(u)=\mathcal J(w)+2\rho_w(a)+3.
\end{align*}
Furthermore, since $w$ is regular, one can check that $u$ is also regular with level sequence
\[
    h_1,\ldots,h_s,
    \underbrace{H,\ldots,H}_{t-s+1\text{ copies}},
    h_{t+1},\ldots,h_m,
\]
which is weakly decreasing.
\end{proof}

\begin{exa}
Let $w=(1,2)(3,4)(5,6)(7,8)$, its level sequence is $(8,6,4,2)$.  Let $a=4$, then $\rho_w(4)=4$ and $H=6$. Relabeling gives $(1, a+2)\iota_a(w) = (1,6)(2,3)(4,5)(7,8)(9,10)$. Then $\TIns_a(w)$ is obtained by applying the following sequence of carrier moves
\[
\begin{aligned}
    &(1,6)(2,3)(4,5)(7,8)(9,10)\\
    &\quad\longmapsto
    (2,3)(1,6)(4,5)(7,8)(9,10)\\
    &\quad\longmapsto
    (2,3)(4,6)(1,5)(7,8)(9,10).
\end{aligned}
\]
Thus $ \TIns_4(w)=(2,3)(4,6)(1,5)(7,8)(9,10)$. It is regular with level sequence $(8,6,6,4,2)$.
\end{exa}

The tight-insertion $\TIns_a(w)$ gives the unique element in $\Ins_a(w)$ that minimizes $\mJ$.

\begin{prop}
\label{p:strict-tight-insertion}
Let $w$ be regular and let $u\in\Ins_a(w)$.  Then
\[
    \mathcal J(u)\geq\mathcal J(w)+2\rho_w(a)+3,
\]
with equality if and only if $ u=\TIns_a(w)$.
\end{prop}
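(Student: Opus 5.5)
The inequality is exactly \eqref{eq:J-universal-lower} of Lemma~\ref{lem:insertion-lower-bounds}. Equality for $\TIns_a(w)$ is Lemma~\ref{lem:tight-insertion-exact}. So what remains is uniqueness: if $u\in\Ins_a(w)$ attains equality, then $u=\TIns_a(w)$.

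Since \eqref{eq:J-universal-lower} is obtained by summing the letterwise bounds \eqref{eq:old-letter-lower-bound} and \eqref{eq:new-letter-lower-bound}, equality forces every one of them to be tight. That is,
\[
L_u(\iota_a(r))=L_w(r)\ \text{for all } r,\qquad L_u(q)=H-1,\qquad L_u(1)=H,
\]
where $H=\rho_w(a)+2$. I would then track the sequence of carrier moves producing $u$ and show that each choice of Swap, Cross, or stopping is forced.

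Write $u$ as produced from $(1,q)\iota_a(w)$ by moves against the old pairs $i=1,2,\dots,k$ in order. The carrier only moves rightward, and each move processes the next old pair, so a move sequence is a word in $\{\mathrm{Swap},\mathrm{Cross}\}$ of some length $k$. I would show by induction on $i$ that the $i$th move agrees with the tight one, and that $k=t$, where $t=\max\{i:h_i\geq H\}$.

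\textbf{Case $h_i>H$ but Cross is applied.} Suppose the carrier $(1,R)$ with $R=q$ crosses $(\iota_a(b_i),\iota_a(c_i))$. This produces $(\iota_a(b_i),q)(1,\iota_a(c_i))$, and the carrier ends up weakly left of where it would be in $\TIns_a(w)$. Then $1$ is followed by $\iota_a(c_i)$ with $L_u(\iota_a(c_i))\geq h_i-1\geq H$, so $L_u(1)\geq H+1$, a contradiction.

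\textbf{Case $h_i=H$ but Swap is applied.} Here the letter $\iota_a(c_i)$ sits left of the carrier. If $R=q$ before the move, then after the swap $\iota_a(b_i)$ precedes $1$. A Swap with $R=q$ keeps $q$ behind $\iota_a(b_i),\iota_a(c_i)$, but the pair $(\iota_a(b_i),\iota_a(c_i))$ precedes the carrier, and $R$ is greater than $\iota_a(c_i)$. Hence $\iota_a(c_i)$, followed by $R$ and then by an LIS of length $H-2$ above $q$, gives $L_u(\iota_a(c_i))\geq H$, exceeding $L_w(c_i)=H-1$. The same argument works when $R=\iota_a(c_{i-1})$: by Lemma~\ref{lem:insertion-lower-bounds}, $1,R$ extends to an increasing sequence through $\iota_a(v)$, so $R$ itself starts an increasing sequence of length at least $H-1$.

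\textbf{Stopping early, $k<t$.} The next pair has $h_{k+1}\geq H$, and by Lemma~\ref{l: TIns-well-define} the move would be legal. The carrier $(1,R)$ is followed by $(\iota_a(b_{k+1}),\iota_a(c_{k+1}))$, which it nests, so $1,\iota_a(b_{k+1}),\dots$ gives $L_u(1)\geq 1+h_{k+1}>H$. Going beyond $t$ is impossible by the last part of Lemma~\ref{l: TIns-well-define}: the tight path agrees with $u$ up to step $t$, and after that no legal move exists.

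The main obstacle is the second case, $h_i=H$ with Swap. One must carefully check that the carrier partner $R$ satisfies $R>\iota_a(c_i)$ and that $R$ is followed by an increasing chain of length $H-2$, so that $\iota_a(c_i)$ overshoots its level. I expect this to follow from the invariant in the proof of Lemma~\ref{lem:insertion-lower-bounds}, namely that $(1,R,\iota_a(v_1),\dots,\iota_a(v_\ell))$ is increasing, together with the nesting condition $R>\iota_a(c_i)$ required for any legal move.
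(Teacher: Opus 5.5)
Your overall strategy matches the paper's. Equality forces $L_u(\iota_a(r))=L_w(r)$, $L_u(q)=H-1$ and $L_u(1)=H$, and you then show that the first carrier move deviating from the tight rule leads to a contradiction. Your treatment of stopping early, of a Swap at a level-$H$ pair, and of moves beyond pair $t$ is fine.

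There is a genuine gap in the case $h_i>H$ with Cross. You conclude that in the final word $u$ the letter $1$ is followed by $\iota_a(c_i)$. That holds only if the carrier never crosses again. After this Cross the carrier is $(1,\iota_a(c_i))$, and your induction has not yet constrained the later moves. A later Cross $(1,\iota_a(c_i))(B,C)\mapsto(B,\iota_a(c_i))(1,C)$ moves $\iota_a(c_i)$ to the left of $1$. This really occurs: if $h_{i+1}=h_i$ then $c_{i+1}<c_i$, so the new carrier nests pair $i+1$ and may legally cross it. Your remark that the carrier ends up ``weakly left of where it would be in the tight insertion'' is unjustified and does not address this.

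The missing idea, which the paper uses, is to follow the carrier partner through all later crossings.
\begin{itemize}
\item Suppose the carrier is $(1,R)$ with $L_u(R)=h_i-1$ and it crosses a later pair $(\iota_a(b_j),\iota_a(c_j))$.
\item This fixes $R$ immediately after $\iota_a(b_j)<R$ in $u$, so $h_j=L_u(\iota_a(b_j))\geq 1+L_u(R)=h_i$.
\item Since levels weakly decrease, $h_j=h_i$. The new partner then satisfies $L_u(\iota_a(c_j))=h_j-1=h_i-1$. Equivalently, crossing a strictly lower-level pair is an immediate contradiction.
\item Hence the final partner $R$ has $L_u(R)=h_i-1\geq H$, which gives $L_u(1)\geq h_i>H$.
\end{itemize}

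You flagged the Swap at level $H$ as the main obstacle, but it is the easy case. The swapped-past letter $\iota_a(c_i)$ stays to the left of $R$ permanently, because the partner's position never moves left. Moreover, $L_u(R)=H-1$ follows directly from the equality conditions. The real subtlety is the Cross case above.
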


\begin{proof}
The inequality is Lemma~\ref{lem:insertion-lower-bounds}, and
Lemma~\ref{lem:tight-insertion-exact} shows that $\TIns_a(w)$ achieves
equality. We prove that no other element in $\Ins_a(w)$ does.

Let $H=\rho_w(a)+2$ and $q =a+ 2$. Suppose equality holds, then 
\begin{align*}
     0& = \mathcal J(u) - \mathcal J(w) - 2\rho_w(a) - 3 \\
     &= \sum_{r }
\bigl(L_u(\iota_a(r))-L_w(r)\bigr) +\bigl(L_u(q)-(H-1)\bigr)
 +\bigl(L_u(1)-H\bigr).
\end{align*}
By Lemma~\ref{lem:insertion-lower-bounds}, each term on the right hand side is nonnegative, so
\begin{equation}
\label{eq:equality-old-L}
    L_u(\iota_a(r))=L_w(r)
    \qquad\text{for every letter }r \text{ in }w,
\end{equation}
and
\begin{equation}
\label{eq:equality-new-L}
    L_u(q)=H-1,
    \qquad
    L_u(1)=H.
\end{equation}

Fix a sequence of carrier-moves that produce \(u\), and let \(i\) be the first old pair at which its choice differs from the tight-insertion rule.

\smallskip
\noindent\emph{Case 1: $h_i>H$.}
Tight insertion swaps past pair $i$.  Since the level sequence is weakly
decreasing, every earlier pair also has level greater than $H$, so no
crossing has yet occurred and the carrier is $(1,q)$. 

If the insertion stops, then $1$ precedes the larger letter $\iota_a(b_i)$, but
\[
    L_u(1)\geq1+L_u(\iota_a(b_i))=h_i+1>H,
\]
contradicts~\eqref{eq:equality-new-L}.

Suppose instead that the carrier crossed pair $i$. The new carrier partner
is $\iota_a(c_i)$, with $L_u(\iota_a(c_i)) = h_i-1$ by~\eqref{eq:equality-old-L}. Let $h=h_i$ be the current level. As long as the carrier only swaps, the carrier partner is unchanged. If it crosses another pair of level $h$, the new carrier partner still satisfies $L_u(\iota_a(c_j)) = h-1$. Thus, if the carrier never crosses with a pair $(\iota_a(b_j), \iota_a(c_j))$ such that $h_j < h$, then the carrier partner $R$ gives $L_u(1)\geq1+L_u(R)=h>H$, which is a contradiction. Therefore, such crossing with a lower level pair must occur. Let $(b_j,c_j)$ be the first pair of level $h_j<h$ through which the carrier crosses.  Immediately before this crossing, the carrier $R$ satisfies $L_u(R)=h-1$ and $\iota_a(b_j)<R$. The crossing puts $R$ immediately after $\iota_a(b_j)$, so
\[
     h > h_j = L_w(b_j)=L_u(\iota_a(b_j))\geq1+L_u(R)=h,
\]
which is a contradiction.

\smallskip
\noindent\emph{Case 2: $h_i=H$.}
Tight insertion crosses pair $i$. Let $(1,R)$ be the carrier immediately
before that pair.  Because every earlier choice was tight, either $R=q$ or
$R$ is the second entry of an earlier level-$H$ pair.  In either case, $L_u(R)= h_i - 1  =H-1$.
If the carrier stops instead of crossing, then
\[
    L_u(1)\geq1+L_u(\iota_a(b_i)) = H+1,
\]
is a contradiction. If the carrier swaps instead, then $\iota_a(c_i)<R$, and $R$ remains to the right of $\iota_a(c_i)$. Hence,
\[
    L_u(\iota_a(c_i))\geq1+L_u(R)=H,
\]
which is a contradiction to $L_u(\iota_a(c_i))=L_w(c_i)=H-1$.

\smallskip
\noindent\emph{Case 3: $h_i<H$.}
By Lemma~\ref{l: TIns-well-define}, after all preceding tight choices the
carrier does not nest pair $i$.  Therefore neither carrier move is legal,
and stopping is forced.

Thus equality forces every carrier move to follow the tight-insertion rule, and hence \(u=\TIns_a(w)\). 
\end{proof}

Using the tight-insertion, we can recursively construct the unique inverse Hecke atom that maximizes the Rajchgot index.
\begin{defn}
\label{def:omega-recursion}
For a valid code $c$, define a paired word $\omega(c)$ recursively by
\[
    \omega(\varnothing)=\varnothing,
    \qquad
    \omega(a,c')=\TIns_a(\omega(c')).
\]
Thus the entries of $c=(c_1,\ldots,c_m)$ are read from right to left.
\end{defn}

\begin{exa}
For $c=(3,0,2,0)$, the recursion gives
\begin{align*}
    \omega(0)&=(1,2),\\
    \omega(2,0)&=(2,4)(1,3),\\
    \omega(0,2,0)&=(1,2)(4,6)(3,5),\\
    \omega(3,0,2,0)&=(2,5)(1,3)(6,8)(4,7).
\end{align*}
\end{exa}

To show that $\omega(c)$ is well-defined, it suffices to show the following lemma.
\begin{lem}
\label{lem:omega-regular}
For a valid code $c$, the paired word $\omega(c)$ is regular.
\end{lem}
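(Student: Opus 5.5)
We argue by induction on the length $m$ of the valid code $c$. Regularity of $\omega(c')$ is what makes $\TIns_a(\omega(c'))$ defined at all (Definition~\ref{d: tight insertion} uses the level sequence). So the induction shows at once that $\omega(c)$ is well defined and regular.

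The base cases are $m=0$ and $m=1$. For $m=0$ the empty paired word is vacuously regular. For $m=1$, validity forces $c=(0)$, and $\omega(0)=\TIns_0(\varnothing)=(1,2)$. This word has $L(1)=2$ and $L(2)=1$, so it is regular with level sequence $(2)$, consistent with the normalization $h_m=2$.

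For the inductive step, write $c=(a,c')$ with $c'$ valid of length $m-1\geq 1$. By the inductive hypothesis $w:=\omega(c')$ is a regular paired word on $[2m-2]$ with level sequence $h_1\geq\cdots\geq h_{m-1}=2$. Validity of $c$ gives $a\leq 2(m-1)$, which is exactly the range $0\leq a\leq 2(m-1)$ required in Definition~\ref{def:carrier-insertion}. Lemma~\ref{l: TIns-well-define} then says $u=\TIns_a(w)$ is well defined. Lemma~\ref{lem:tight-insertion-exact} says $u$ is regular, with level sequence obtained by inserting $H=\rho_w(a)+2$ into $(h_1,\ldots,h_{m-1})$.

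The remaining point is the normalization that the last level of $u$ equals $2$. The inserted value satisfies $H\geq 2$ since $\rho_w(a)\geq 0$. The last level of $w$ is $2$, and inserting a part $H\geq 2$ into a weakly decreasing sequence keeps its minimum equal to $2$. Concretely, if $H>2$ the new part lands before the final entry $h_{m-1}=2$. If $H=2$ it lands in the level-$2$ block and the last entry is still $2$. The constraint $0\leq h_i-h_{i+1}\leq 2$ is automatic from regularity, as noted after Definition~\ref{def: paired_word}, so nothing further needs checking.

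The main obstacle is the last assertion of Lemma~\ref{lem:tight-insertion-exact}, which is stated there with ``one can check''. If it needs justifying, I would verify it block by block using the computed values. In the first block the pairs keep $L(\iota_a(b_i))=h_i$ and $L(\iota_a(c_i))=h_i-1$. The second block consists of the pairs $(\iota_a(b_{s+1}),q),\ldots,(\iota_a(b_t),\iota_a(c_{t-1})),(1,\iota_a(c_t))$. Every first entry there has $L=H$, and every second entry, including $q$ and $\iota_a(c_t)$, has $L=H-1$, by the equalities already proved in that lemma. The third block is unchanged. So each pair of $u$ has first-entry value one more than its second-entry value, and the levels read $h_1,\ldots,h_s,H,\ldots,H,h_{t+1},\ldots$, which is weakly decreasing by the choice of $s$ and $t$.
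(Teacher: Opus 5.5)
Your proof is correct and is the paper's argument: induction on the length of $c$, with Lemma~\ref{lem:tight-insertion-exact} (and, for well-definedness, Lemma~\ref{l: TIns-well-define}) showing that the tight insertion of the regular word $\omega(c')$ is again regular. Your separate $m=1$ base case, the check that the last level is $2$, and the block-by-block verification are harmless extra detail. The normalization $h_m=2$ is in fact automatic for any paired word satisfying the level conditions, since its final pair $(b_m,c_m)$ always has $L(c_m)=1$ and $L(b_m)=2$.
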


\begin{proof}
We induct on the length of $c$. The empty paired word is regular.  Write
$c=(a,c')$.  By induction, $\omega(c')$ is regular.  By Lemma
\ref{lem:tight-insertion-exact}, $\TIns_a(\omega(c')) = \omega(c)$ is again regular.
\end{proof}

\begin{lem}
\label{lem:omega-inverse-atom}
For $z \in \ifpf_{2m}$, $\omega(\code(z))\in\mathcal P(z)$.
\end{lem}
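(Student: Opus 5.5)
We argue by induction on $m$, using the recursive description of $\PP(z)$ in Proposition~\ref{p:Ins-factorization}.

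For the base case $m=0$, the involution $z$ is empty, $\code(z)=\varnothing$, and $\omega(\varnothing)=\varnothing$. The empty word is the unique element of $\PP(z)$. One could equally start at $m=1$: here $z=(1,2)$, $\code(z)=(0)$, and $\omega(0)=(1,2)=\Gamma(z)$.

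For the inductive step, write $z=(a,y)$ with $y\in\ifpf_{2m-2}$, so that $\code(z)=(a,\code(y))$. By Definition~\ref{def:omega-recursion},
\[
\omega(\code(z))=\TIns_a(\omega(\code(y))).
\]
By the induction hypothesis, $w:=\omega(\code(y))\in\PP(y)$. By Lemma~\ref{lem:omega-regular}, $w$ is regular, so Lemma~\ref{l: TIns-well-define} shows that $\TIns_a(w)$ is well defined.

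It remains to check that $\TIns_a(w)$ lies in $\Ins_a(w)$. By Definition~\ref{d: tight insertion}, $\TIns_a(w)$ is obtained from $(1,a+2)\iota_a(w)$ by a finite sequence of Swap and Cross moves. Lemma~\ref{l: TIns-well-define} guarantees that each of these moves is legal, meaning the carrier nests the next pair whenever a move is applied. Hence $\TIns_a(w)\in\Ins_a(w)$ by the definition of $\Ins_a$. Proposition~\ref{p:Ins-factorization} then gives
\[
\TIns_a(w)\in\bigcup_{w'\in\PP(y)}\Ins_a(w')=\PP(z),
\]
which completes the induction.

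The only point needing care is the legality of the tight moves, and this is exactly what Lemma~\ref{l: TIns-well-define} supplies. We also need that $a\le 2(m-1)$, so that the relabeling $\iota_a$ is applied with $a$ in the range required by Definition~\ref{def:carrier-insertion}. This follows from the validity of the code.
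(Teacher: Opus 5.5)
Your proof is correct and follows the paper's argument. You induct on the number of cycles, write $z=(a,y)$, and conclude $\omega(\code(z))=\TIns_a(\omega(\code(y)))\in\Ins_a(\omega(\code(y)))\subseteq\PP(z)$ via Proposition~\ref{p:Ins-factorization}. Your appeals to Lemma~\ref{lem:omega-regular}, to Lemma~\ref{l: TIns-well-define} for the legality of the tight moves, and to validity of the code for the range of $a$ simply make explicit details that the paper leaves implicit.
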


\begin{proof}
We induct on the number of cycles of $z$. The base case when $z = (1,2)$ is trivial. Write $z=(a,y)$.  By induction, $\omega(\code(y))\in\mathcal P(y)$.
Then by Proposition~\ref{p:Ins-factorization},
\[
    \omega(\code(z))
    =\TIns_a(\omega(\code(y)))
    \in\Ins_a(\omega(\code(y))) \subseteq \PP(z). \qedhere
\]
\end{proof}

Finally, we show that $\omega$ recursively constructs the unique inverse Hecke atom that maximizes the Rajchgot index.

\begin{lem}
\label{lem:omega-coordinatewise-minimal}
Let $z \in \ifpf_{2m}$, $u = \omega(\code(z))$, and $w\in\mathcal P(z)$ be any inverse Hecke atom. Then
\begin{equation}
\label{eq:omega-coordinatewise-minimal}
    L_{u}(r)\leq L_w(r) \qquad\text{for  }r\in[2m].
\end{equation}
\end{lem}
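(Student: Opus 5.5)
We argue by induction on $m$, using the recursion $z=(a,y)$ together with Proposition~\ref{p:Ins-factorization}. The base case $m=1$ is trivial, since $\PP((1,2))=\{(1,2)\}$. For the inductive step, write $z=(a,y)$, $u'=\omega(\code(y))$, and $u=\TIns_a(u')$. By Proposition~\ref{p:Ins-factorization}, any $w\in\PP(z)$ lies in $\Ins_a(w')$ for some $w'\in\PP(y)$, and the induction hypothesis gives $L_{u'}(r)\le L_{w'}(r)$ for all $r\in[2m-2]$. The letters of $[2m]$ split into the old letters $\iota_a(r)$ and the two new letters $1$ and $q=a+2$. We compare these two groups separately.

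\textbf{Old letters.} By Lemma~\ref{lem:tight-insertion-exact}, $L_u(\iota_a(r))=L_{u'}(r)$. By Lemma~\ref{lem:insertion-lower-bounds} applied to $w\in\Ins_a(w')$, $L_w(\iota_a(r))\ge L_{w'}(r)$. The induction hypothesis then gives
\[
L_u(\iota_a(r))=L_{u'}(r)\le L_{w'}(r)\le L_w(\iota_a(r)).
\]

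\textbf{New letters.} Lemma~\ref{lem:tight-insertion-exact} gives $L_u(q)=\rho_{u'}(a)+1$ and $L_u(1)=\rho_{u'}(a)+2$. Lemma~\ref{lem:insertion-lower-bounds} gives $L_w(q)\ge\rho_{w'}(a)+1$ and $L_w(1)\ge\rho_{w'}(a)+2$. It therefore suffices to show
\[
\rho_{u'}(a)\le\rho_{w'}(a).
\]
This follows from the coordinatewise hypothesis, because
\[
\rho_{u'}(a)=\max_{r>a}L_{u'}(r)\le\max_{r>a}L_{w'}(r)=\rho_{w'}(a),
\]
where both maxima run over the same letter set $\{r>a\}$, the two words sharing the alphabet $[2m-2]$. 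This completes the induction.

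\textbf{Main point to verify.} The step to check is that every $w\in\PP(z)$ really decomposes as an element of $\Ins_a(w')$ with $w'\in\PP(y)$. This is exactly the second statement of Proposition~\ref{p:Ins-factorization}, obtained from first extraction. We also need that Lemma~\ref{lem:insertion-lower-bounds} holds for arbitrary paired words $w'$, not only regular ones; its proof never uses regularity, so it applies. Regularity is needed only for $u'$, to invoke the exact equalities of Lemma~\ref{lem:tight-insertion-exact}, and this is supplied by Lemma~\ref{lem:omega-regular}. With these in place the argument is a direct chain of inequalities, and summing over $r$ recovers the minimality of $\mathcal J$.
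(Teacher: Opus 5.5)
Your proposal is correct and follows the paper's proof essentially step for step. Both arguments induct via $z=(a,y)$ and use Proposition~\ref{p:Ins-factorization} to place $w\in\Ins_a(w')$. They then bound the old letters by chaining Lemma~\ref{lem:tight-insertion-exact} with Lemma~\ref{lem:insertion-lower-bounds}, and handle the new letters $1$ and $q$ through the observation $\rho_{u'}(a)\le\rho_{w'}(a)$, which follows from the coordinatewise hypothesis.
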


\begin{proof}
We prove by induction on the number of cycles. The base case when $z$ is empty is trivial.  Write $z=(a,y)$ and set $ u'=\omega(\code(y))$.
By Proposition~\ref{p:Ins-factorization}, there exists some $ w' \in\mathcal P(y)$ such that
$w\in\Ins_a(w')$.  The induction hypothesis gives
\begin{equation}
\label{eq:parent-coordinatewise-minimal}
    L_{u'}(r)\leq L_{w'}(r)
    \qquad \text{for }r\in[2m-2].
\end{equation}
By Lemma~\ref{lem:insertion-lower-bounds} and Lemma~\ref{lem:tight-insertion-exact}, for every letter $r$ appearing in $u'$,
\begin{align*}
    L_{u}(\iota_a(r))=L_{u'}(r) \leq L_{w'}(r) \leq L_w(\iota_a(r)).
\end{align*}

It remains to compare the two letters $1$ and $q = a+2$. Since $L_{u'}(r)\leq L_{w'}(r)$ for all $r$, we have $\rho_{u'}(a)\leq\rho_{w'}(a)$. Therefore by Lemma~\ref{lem:insertion-lower-bounds} and Lemma~\ref{lem:tight-insertion-exact},
\begin{align*}
    L_{u}(q) &=\rho_{u'}(a)+1 \leq\rho_{w'}(a)+1 \leq L_w(q),\\
    L_{u}(1) &=\rho_{u'}(a)+2 \leq\rho_{w'}(a)+2 \leq L_w(1). \qedhere
\end{align*}
\end{proof}

\begin{prop}
\label{p:unique-raj-maximizer}
Let $z \in \ifpf_{2m}$, then $\omega(\code(z))$ is the unique minimizer of $\mathcal J$ in $\PP(z)$. Equivalently, it is the unique maximizer of the Rajchgot index among all permutations in $\PP(z)$.
\end{prop}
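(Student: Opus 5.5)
The plan is to combine Lemma~\ref{lem:omega-coordinatewise-minimal} with Proposition~\ref{p:strict-tight-insertion} by induction on the number of cycles $m$ of $z$.

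\emph{Minimality.} By Lemma~\ref{lem:omega-inverse-atom}, $u:=\omega(\code(z))$ lies in $\PP(z)$. Summing the coordinatewise inequality of Lemma~\ref{lem:omega-coordinatewise-minimal} over $r\in[2m]$ gives $\mathcal J(u)\leq\mathcal J(w)$ for every $w\in\PP(z)$. So $u$ minimizes $\mathcal J$ on $\PP(z)$.

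\emph{Uniqueness.} Suppose $w\in\PP(z)$ and $\mathcal J(w)=\mathcal J(u)$. Since every term $L_w(r)-L_u(r)$ is nonnegative and they sum to zero, we get $L_w(r)=L_u(r)$ for all $r$. We show $w=u$ by induction on $m$; the case $m=0$ is trivial.

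Write $z=(a,y)$ and $u'=\omega(\code(y))$. By Proposition~\ref{p:Ins-factorization}, choose $w'\in\PP(y)$ with $w\in\Ins_a(w')$. Chaining the inequalities from the proof of Lemma~\ref{lem:omega-coordinatewise-minimal}, we have
\[
L_u(\iota_a(r))=L_{u'}(r)\leq L_{w'}(r)\leq L_w(\iota_a(r)),
\]
and, for the two new letters,
\[
L_u(1)=\rho_{u'}(a)+2\leq\rho_{w'}(a)+2\leq L_w(1),\qquad L_u(q)=\rho_{u'}(a)+1\leq\rho_{w'}(a)+1\leq L_w(q).
\]
Equality at both ends forces equality throughout. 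Hence $L_{w'}=L_{u'}$, so $\mathcal J(w')=\mathcal J(u')$, and the induction hypothesis gives $w'=u'$. Also $\rho_{w'}(a)=\rho_{u'}(a)$, and summing the equalities gives
\[
\mathcal J(w)=\mathcal J(w')+2\rho_{w'}(a)+3.
\]
Now $w'=u'$ is regular by Lemma~\ref{lem:omega-regular}, so the equality case of Proposition~\ref{p:strict-tight-insertion} gives $w=\TIns_a(u')=u$.

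\emph{Rajchgot index.} The reformulation in terms of the Rajchgot index is immediate. Every element of $\PP(z)$ lies in $S_{2m}$, and $\raj(w)=\binom{2m+1}{2}-\mathcal J(w)$ for such $w$. So minimizing $\mathcal J$ is the same as maximizing $\raj$.

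The only delicate point is keeping the inductive hypothesis aligned with the induction. The hypothesis that carries through is the equality of $\mathcal J$ values, and it must be recovered for $w'$ from the sandwich of inequalities above. Once that is done, regularity of $u'$ is what allows Proposition~\ref{p:strict-tight-insertion} to be applied. I expect no further obstacles.
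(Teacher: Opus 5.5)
Your proof is correct and uses the same ingredients as the paper's proof: induction on the number of cycles via Proposition~\ref{p:Ins-factorization}, together with Lemma~\ref{lem:omega-coordinatewise-minimal}, the inductive hypothesis, and the equality case of Proposition~\ref{p:strict-tight-insertion} applied to the regular word $u'$. The only difference is bookkeeping. The paper writes the difference between $\mathcal J$ of an arbitrary atom and $\mathcal J(\omega(\code(z)))$ as a sum of three nonnegative terms and forces each to vanish. You instead get minimality directly by summing the coordinatewise lemma, then push coordinatewise equality down to $w'$ through your chain of inequalities.
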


\begin{proof}
We induct on the number of cycles.  The empty case is immediate.  Write
$z=(a,y)$ and let $w'=\omega(\code(y))$ and $w=\omega(\code(z))=\TIns_a(w')$.
Now take arbitrary $u\in\PP(z)$. By Proposition~\ref{p:Ins-factorization}, there exists $u'\in\PP(y)$ such that $u\in\Ins_a(u')$. For each pair of $u$ and $u'$ satisfying $u\in\Ins_a(u')$, we define
\begin{align*}
    \epsilon(u,u',a) :=\mathcal J(u)-\mathcal J(u')-2\rho_{u'}(a) - 3 .
\end{align*}
By Lemma~\ref{lem:tight-insertion-exact},
\[
    \mathcal J(w)=\mathcal J(w')+2\rho_{w'}(a)+3.
\]
Therefore
\begin{equation}
    \label{eq:uniqueness-decomposition}
    \mathcal J(u) - \mathcal J(w)
    = \,\epsilon(u,u',a) +\bigl(\mathcal J(u')-\mathcal J(w')\bigr) +2\bigl(\rho_{u'}(a)-\rho_{w'}(a)\bigr).
\end{equation}

By Lemma~\ref{lem:insertion-lower-bounds}, $\epsilon(u,u',a)$ is nonnegative. By the inductive hypothesis, $\mathcal J(u')-\mathcal J(w')$ is nonnegative. By Lemma~\ref{lem:omega-coordinatewise-minimal}, $\rho_{u'}(a)-\rho_{w'}(a)$ is also nonnegative. Thus $\mathcal J(u) \geq \mathcal J(w)$.

Suppose equality holds, then all three nonnegative terms in
\eqref{eq:uniqueness-decomposition} vanish.  In particular, by the inductive hypothesis, $\mathcal J(u')=\mathcal J(w')$ implies that $ u' = w'$. Furthermore, $\epsilon(u,u',a)=0$ implies that $u = \TIns_a(u')$ by Proposition~\ref{p:strict-tight-insertion}. So $u = \TIns_a(u') = \TIns_a(w') = w$ and $w = \omega(\code(z))$ is the unique $\mathcal J$-minimizer.

Finally, for a word $u \in \PP(z)$,
\[
    \raj(u) = \binom{2m+1}{2}-\mathcal J(u).
\]
So $w = \omega(\code(z))$ is also the unique maximizer of the Rajchgot index. 
\end{proof}

We can now prove Theorem~\ref{t: main_top}.
\begin{proof}[Proof of Theorem~\ref{t: main_top}]
    By Theorem~\ref{t: psw raj} and Proposition~\ref{p:unique-raj-maximizer}, $\omega(\code(z))^{-1}$ is the unique element in $B_{\mathrm{FPF}}(z)$ that maximizes the Rajchgot index, so its Grothendieck polynomial has the highest degree among all elements in $B_{\mathrm{FPF}}(z)$. The statement then follows from Theorem~\ref{t: symplectic to Gro expansion}.
\end{proof}

Recall that the highest-degree homogeneous component of a Grothendieck polynomial is completely determined, up to scalar multiplication, by its Rajchgot code. In fact, Pechenik, Speyer, and Weigandt showed that among the $n!$ Grothendieck polynomials indexed by permutations in $S_n$, the number of distinct highest-degree homogeneous components, up to scalar multiplication, is given by the number of set partitions of $[n]$, which is the $n\textsuperscript{th}$ Bell number. In contrast, the highest-degree homogeneous components of symplectic Grothendieck polynomials exhibit the opposite behavior: they are pairwise distinct even up to scalar multiplication.

\begin{thm}
\label{t: fpf distinct top}
    For $z,z' \in \ifpf_{2m}$, 
    \begin{align*}
        \widehat{\fG}_{z}^\fpf(\x) = c \cdot \widehat{\fG}_{z'}^\fpf(\x) \, \text{ for some } c \in \mathbb{Q} \quad \iff \quad z = z'
    \end{align*}
\end{thm}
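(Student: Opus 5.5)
By Theorem~\ref{t: main_top}, $\widehat{\fG}^{\fpf}_z(\x)$ equals $\pm\widehat{\fG}_{w}(\x)$ with $w=\omega(\code(z))^{-1}$. The same holds for $z'$, with $w'=\omega(\code(z'))^{-1}$. By Theorem~\ref{t: rajcode distinguish}, the two top components agree up to a scalar if and only if $\rajcode(w)=\rajcode(w')$. So the plan is to show that the map $z\mapsto \rajcode(\omega(\code(z))^{-1})$ is injective on $\ifpf_{2m}$. The direction $z=z'\Rightarrow$ proportionality is trivial.

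\textbf{Step 1: rewrite the Rajchgot code of the inverse.} For $w=u^{-1}$ with $u=\omega(\code(z))$, the definition gives
\[
r_i(w)=2m-i+1-L_{w^{-1}}(i)=2m-i+1-L_u(i).
\]
Hence the Rajchgot code of $w$ is equivalent data to the vector $(L_u(1),\dots,L_u(2m))$, indexed by letters of $u$. It therefore suffices to show that this letter-indexed vector determines $\code(z)$.

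\textbf{Step 2: recover $a$ and the vector of $u'$.} Write $z=(a,y)$ and $u=\TIns_a(u')$ with $u'=\omega(\code(y))$. By Lemma~\ref{lem:tight-insertion-exact}, $L_u(1)=H$ and $L_u(a+2)=H-1$, where $H=\rho_{u'}(a)+2$. Also $L_u(\iota_a(r))=L_{u'}(r)$ for every old letter $r$.

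The claim is that $a$ can be read off as the unique $j\ge 2$ for which deleting letters $1$ and $j$ and standardizing yields a vector consistent with $H=L_u(1)$. Concretely, I would read $q=a+2$ as the smallest letter $j>1$ with $L_u(j)=L_u(1)-1$ such that all letters $r>j$ satisfy $L_u(r)\le L_u(1)-2$. The justification is that $\rho_{u'}(a)=H-2$ bounds $L$ on letters exceeding $q$.

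I must also rule out an earlier candidate $j<q$ with these properties. Such a $j$ would be an old letter $\iota_a(r)$ with $r\le a$ and $L_{u'}(r)=H-1$, while every old letter between $j$ and $q$ has $L\le H-2$. This is the delicate point. I would try to use regularity of $u'$: in a regular word, the $L$-values decrease in a controlled way along the value order. I would also use the fact that the letter $a+1$ (old numbering), or whichever old letter lies just below $q$, should have $L\ge H-1$, since otherwise some letter $\le a$ would realize a larger $\rho$.

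An alternative, and probably cleaner, plan is to avoid uniqueness arguments and instead show $a$ is determined by inverting the process. If two codes $(a,c')$ and $(b,c'')$ give the same vector, then $L_u(1)$ agrees, so $\rho_{u'}(a)=\rho_{u''}(b)$. I would then compare the positions of letter value $L(1)-1$ just below the threshold.

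\textbf{Step 3: induct.} Once $a$ is recovered, deleting letters $1$ and $a+2$ and standardizing gives the $L$-vector of $u'=\omega(\code(y))$ by Lemma~\ref{lem:tight-insertion-exact}. Induction on $m$ then recovers $\code(y)$, hence $\code(z)$, hence $z$ by Lemma~\ref{l: code composition bijection}.

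The main obstacle is Step 2, identifying $q$ unambiguously from the $L$-vector. I would first test the "largest $j$ with $L_u(j)\ge L_u(1)-1$" rule, which seems the most natural. With this rule, $\rho$ being a maximum over letters $>t$ makes $q$ the rightmost-in-value letter attaining $H-1$ provided old letters $>a$ all have $L\le H-2$. That proviso is exactly the statement $\rho_{u'}(a)=H-2$, so the rule should work.
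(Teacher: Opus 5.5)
Your proposal is correct and is essentially the paper's proof. You reduce via Theorems~\ref{t: main_top} and~\ref{t: rajcode distinguish} to injectivity of $z\mapsto\rajcode(\omega(\code(z))^{-1})$, recover $q=a+2$ as the largest letter with $L_u(q)\ge L_u(1)-1$, then delete $1,q$, use $L_u(\iota_a(r))=L_{u'}(r)$, and induct. The paper's Lemma~\ref{l: recover first entry} uses $=L_u(1)-1$ instead, which picks out the same letter because every letter above $q$ has $L\le\rho_{u'}(a)=H-2$.

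The ``delicate point'' in Step~2 is vacuous: any $j<q$ already violates your own condition, since $L_u(q)=H-1$. The auxiliary claim that the letter just below $q$ has $L\ge H-1$ is neither needed nor true in general (e.g.\ $u'=(2,4)(1,3)$, $a=3$), so you can drop that detour.
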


By Theorem~\ref{t: main_top} and Theorem~\ref{t: rajcode distinguish}, it suffices to show that the map from $\ifpf_{2m}$ to weak compositions that sends $z \mapsto \rajcode(\omega(\code(z))^{-1})$ is injective. We show injectivity by constructing the inverse map. The first step is to recover the first entry of $\code(z)$.

\begin{lem}
\label{l: recover first entry}
    For $z \in \ifpf_{2m}$, let $u = \omega(\code(z))$ and $\rajcode(u^{-1}) = (r_1, \dots, r_{2m})$. Then the first entry of $\code(z)$ is 
    \begin{align*}
        \code_1(z) = \max\{i \in [2m]: i + r_i = r_1 + 2\} - 2.
    \end{align*}
    In particular, it is determined by $\rajcode(u^{-1})$.
\end{lem}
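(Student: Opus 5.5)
The plan is to rewrite the condition $i+r_i=r_1+2$ in terms of the starting-LIS lengths of $u$, and then read off the answer from the exact values computed in Lemma~\ref{lem:tight-insertion-exact}.

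\textbf{Translating the Rajchgot code.} Let $n=2m$. By definition, the Rajchgot code of a permutation $v$ has entries $r_i=n-i+1-L_{v^{-1}}(i)$. Taking $v=u^{-1}$ gives
\[
r_i = n-i+1-L_u(i), \qquad\text{so}\qquad i+r_i=n+1-L_u(i).
\]
In particular $r_1=n-L_u(1)$. Therefore
\[
i+r_i=r_1+2 \iff n+1-L_u(i)=n+2-L_u(1) \iff L_u(i)=L_u(1)-1.
\]
So the claim is equivalent to
\[
\code_1(z)+2=\max\{i\in[2m]: L_u(i)=L_u(1)-1\}.
\]

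\textbf{Reading off levels from tight insertion.} Write $z=(a,y)$, so $a=\code_1(z)$. Put $w=\omega(\code(y))$, which is regular by Lemma~\ref{lem:omega-regular}. Then $u=\TIns_a(w)$. Set $H=\rho_w(a)+2$ and $q=a+2$. Lemma~\ref{lem:tight-insertion-exact} gives three facts:
\begin{itemize}
\item $L_u(1)=H$;
\item $L_u(q)=H-1$, so $q$ belongs to the set above;
\item $L_u(\iota_a(r))=L_w(r)$ for every letter $r$ of $w$.
\end{itemize}

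\textbf{Maximality of $q$.} It remains to show that no letter $i>q$ satisfies $L_u(i)=H-1$. Any such $i$ is not $1$ or $q$, so $i=\iota_a(r)$ for some letter $r$ of $w$. Since $\iota_a(r)>a+2$, the definition of $\iota_a$ forces $r>a$. Hence
\[
L_u(i)=L_w(r)\le\rho_w(a)=H-2<H-1.
\]
So $q=a+2$ is the maximum, which gives $\code_1(z)=\max\{i: i+r_i=r_1+2\}-2$. The set is nonempty because it contains $q$.

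The only point needing care is the index bookkeeping between $\rajcode(u^{-1})$ and $L_u$. This is handled by the identity $L_{w^{-1}}(i)=L_w(w(i))$ from the background section, applied with $w=u^{-1}$. I do not expect a genuine obstacle beyond this.
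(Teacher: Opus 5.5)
Your proof is correct and follows the paper's argument essentially verbatim. You rewrite $i+r_i=r_1+2$ as $L_u(i)=L_u(1)-1$, then use Lemma~\ref{lem:tight-insertion-exact} to see that $q=a+2$ is the largest letter with $L_u(q)=H-1$, because every letter $\iota_a(r)>q$ has $r>a$ and hence $L_u(\iota_a(r))=L_w(r)\le H-2$. The identity $L_{w^{-1}}(i)=L_w(w(i))$ you invoke at the end is not actually needed, since the definition of $\rajcode(u^{-1})$ already gives $r_i=2m-i+1-L_u(i)$ directly.
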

\begin{proof}
    Let $z = (a,y)$, $ w = \omega(\code(y))$, $u = \TIns_a(w) = \omega(\code(z))$, $q = a+2$, and $H= L_u(1) =\rho_w(a) +2$. By Lemma~\ref{lem:tight-insertion-exact}, $L_u(q) = H-1$, and for any $\iota_a(s) > q$, $L_u(\iota_a(s)) = L_w(s) \leq \rho_w(a) = H-2$. So $q$ is the largest letter whose starting LIS has length $H-1$, i.e. $q = \max\{i \in [2m]:L_u(i) = H-1 = L_u(1)-1\}$. Recall that $L_u(i) = 2m - i + 1 - r_i$, so 
    \begin{align*}
        L_u(i) = L_u(1) -1 \iff 2m - i + 1 - r_i = 2m - r_1 -1 \iff i + r_i = r_1 +2.
    \end{align*}
    Hence, the first entry of $\code(z)$ is 
    \begin{align*}
        &\code_1(z) =  a = q-2 = \max\{i \in [2m]: i + r_i = r_1 + 2\} - 2. \qedhere
    \end{align*}
\end{proof}

\begin{proof}[Proof of Theorem~\ref{t: fpf distinct top}]
We show that the map $z \mapsto \rajcode(\omega(\code(z))^{-1})$ is injective by constructing the inverse map. We induct on $m$, the base case when $m=0$ is trivial. Let $z = (a,y)$, $w = \omega(\code(y))$, $u = \TIns_a(w) = \omega(\code(z))$. Let $\rajcode(u^{-1}) = (r_1, \dots, r_{2m})$ and $\rajcode(w^{-1}) = (r_1', \dots, r_{2m-2}')$ be the Rajchgot codes. By Lemma~\ref{l: recover first entry}, $\rajcode(u^{-1})$ completely determines the first entry $a$. It remains to show that $\rajcode(u^{-1})$ determines $\rajcode(w^{-1})$ as well. By Lemma~\ref{lem:tight-insertion-exact}, for $s \in [2m-2]$,
\begin{align*}
    r_s' &= 2m - s -1 - L_w(s) =  2m - s -1 - L_u(\iota_a(s))\\
    &= 2m - s -1 - (2m - \iota_a(s) + 1 - r_{\iota_a(s)})\\
    & = r_{\iota_a(s)} + \iota_a(s) - s - 2.
\end{align*}
So $\rajcode(u^{-1})$ determines $\rajcode(w^{-1})$, which determines $\code(y)$ by the inductive hypothesis. Finally, $\code(z) = (a,\code(y))$, so $\rajcode(u^{-1})$ determines $\code(z)$.
\end{proof}

\section{Symplectic Rajchgot index}
\label{sec:snow-degree}

The goal of this section is to define the symplectic Rajchgot index using the match diagram and to
prove Theorem~\ref{t: main_degree} and Corollary~\ref{cor: Cm-reg}.
\begin{defn}
\label{def:snow-sraj}
Let $z=(b_1,c_1)\cdots(b_m,c_m)\in\ifpf_{2m}$. We construct the \definition{clouds} of \(\MD(z)\) by processing its rows from bottom to top. A cell \((i,j)\in\MD(z)\) is called
\definition{free} if no cloud has previously been placed in column \(j\).
If row \(i\) contains a free cell, then we mark its rightmost free cell as
a \definition{dark cloud}.  If row \(i\) contains at least two free
cells, then we also place an \definition{invisible dark cloud} at
\((i,b_i)\).

The \definition{snow diagram} of \(z\), denoted by \(\snow(z)\), is
obtained from \(\MD(z)\) by filling every cell weakly above a cloud cell in the same column.  Thus
\[
    \snow(z) = \MD(z) \cup \left\{ (r,j): \text{there is a cloud at }(i,j)\text{ with }1\le r\le i \right\}.
\]

Finally, the \definition{symplectic Rajchgot index} of \(z\) is the number of cells that are not invisible dark clouds in $\snow(z)$.
\end{defn}

\begin{exa}
\label{exa:snow-running}
Let $ z=(1,5)(2,3)(4,8)(6,7)$. We present $\MD(z)$ with its lasers along with the snow diagram $\snow(z)$.
\[
\begin{tikzpicture}
            [x=1em,y=1em,thick,color = blue]
            \draw[step=1,gray,ultra thin,dashed] (0,0) grid (8,4);
            \node[color=black] at (-0.5,3.5) {$1$};
            \node[color=black] at (-0.5,2.5) {$2$};
            \node[color=black] at (-0.5,1.5) {$3$};
            \node[color=black] at (-0.5,0.5) {$4$};
            \node[color=black] at (0.5,4.5) {$1$};
            \node[color=black] at (1.5,4.5) {$2$};
            \node[color=black] at (2.5,4.5) {$3$};
            \node[color=black] at (3.5,4.5) {$4$};
            \node[color=black] at (4.5,4.5) {$5$};
            \node[color=black] at (5.5,4.5) {$6$};
            \node[color=black] at (6.5,4.5) {$7$};
            \node[color=black] at (7.5,4.5) {$8$};
            \filldraw [red] (0.5,3.5) circle (1.5pt);
            \filldraw [red] (4.5,3.5) circle (1.5pt);
            \filldraw [red] (1.5,2.5) circle (1.5pt);
            \filldraw [red] (2.5,2.5) circle (1.5pt);
            \filldraw [red] (3.5,1.5) circle (1.5pt);
            \filldraw [red] (7.5,1.5) circle (1.5pt);
            \filldraw [red] (5.5,0.5) circle (1.5pt);
            \filldraw [red] (6.5,0.5) circle (1.5pt);
            \draw[color=red] (0.5,3.5)--(0.5, 0);
            \draw[color=red] (4.5,3.5)--(4.5, 0);
            \draw[color=red] (4.5,3.5)--(8, 3.5);
            \draw[color=red] (1.5,2.5)--(1.5, 0);
            \draw[color=red] (2.5,2.5)--(2.5, 0);
            \draw[color=red] (2.5,2.5)--(8, 2.5);
            \draw[color=red] (3.5,1.5)--(3.5, 0);
            \draw[color=red] (7.5,1.5)--(7.5, 0);
            \draw[color=red] (7.5,1.5)--(8, 1.5);
            \draw[color=red] (5.5,0.5)--(5.5,0);
            \draw[color=red] (6.5,0.5)--(6.5,0);
            \draw[color=red] (6.5,0.5)--(8,0.5);
            \filldraw [gray] (1.5,3.5) circle (3.5pt);
            \filldraw [gray] (2.5,3.5) circle (3.5pt);
            \filldraw [gray] (3.5,3.5) circle (3.5pt);
            \filldraw [gray] (5.5,1.5) circle (3.5pt);
            \filldraw [gray] (6.5,1.5) circle (3.5pt);
        \end{tikzpicture}\qquad \qquad 
\begin{tikzpicture}[x=1em,y=1em,thick]
    \draw[step=1,gray,ultra thin,dashed] (0,0) grid (8,4);
    \foreach \r/\yy in {1/3.5,2/2.5,3/1.5,4/0.5}
        \node at (-0.5,\yy) {$\r$};
    \foreach \c in {1,...,8}
        \node at (\c-0.5,4.5) {$\c$};
    \foreach \x/\y in {2/3,4/3,6/1}
        \filldraw[gray] (\x-0.5,\y+0.5) circle (3.5pt);
    \foreach \x/\y in {3/3,7/1}
        \filldraw[black] (\x-0.5,\y+0.5) circle (3.5pt);
    \draw[black] (0.5,3.5) circle (3.5pt);
    \draw[black] (3.5,1.5) circle (3.5pt);
    \foreach \x/\y in {4/2,7/2,7/3}
        \node[blue] at (\x-0.5,\y+0.5) {$\asterisk$};
\end{tikzpicture}
\]
The visible dark clouds are at \((3,7)\) and \((1,3)\), while the
invisible dark clouds are at \((3,4)\) and \((1,1)\). Every empty cell above each of the dark clouds is filled with the snow cell $\color{blue}{\asterisk}$. The snow diagram $\snow(z)$ has eight cells that are not invisible dark clouds, so $\sraj(z)=8$.
\end{exa}

Let $\mathcal C(z)\subseteq[2m]$ denote the set of columns containing a visible or invisible dark cloud. We now present a recursion on this set. For a subset $C\subseteq[2m]$ and $0 \leq a \leq 2m$, define $F_a(C):=[a]\setminus C$ and $\widehat{F}_a(C) := \max F_a(C)$ if $F_a(C) \neq \varnothing$.
Define
\begin{equation*}
\begin{aligned}
    \Phi_a(C)
    :={}&\iota_a(C)  \, \cup \,
    \begin{cases}
        \{\iota_a(\widehat{F}_a(C))\},&F_a(C)\neq\varnothing,\\
        \varnothing,&F_a(C)=\varnothing,
    \end{cases}  \, \cup \, 
    \begin{cases}
        \{1\},&|F_a(C)|\ge2,\\
        \varnothing,&|F_a(C)|\le1.
    \end{cases}
\end{aligned}
\end{equation*}

\begin{lem}
\label{lem:cloud-recurrence}
Let \(y\in\ifpf_{2m}\), \(0\le a\le2m\), and $ z=(a,y)\in\ifpf_{2m+2}$.
Then $\mathcal C(z)=\Phi_a(\mathcal C(y))$ and
\begin{equation*}
    \sraj(z)-\sraj(y) = a+ |\{r\in\mathcal C(y):r>a\}|.
\end{equation*}
\end{lem}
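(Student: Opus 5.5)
The plan is to compare the cloud construction on $\MD(z)$ with the one on $\MD(y)$ through Lemma~\ref{lem:diagram-recursion}. By \eqref{eq:MD-recursion}, rows $2,\dots,m+1$ of $\MD(z)$ are the rows of $\MD(y)$, shifted down by one and with columns relabeled by $\iota_a$. Row $1$ is $T_a=\{(1,2),\dots,(1,a+1)\}$, whose column set is $\iota_a([a])$. Since clouds are placed from bottom to top, the processing of rows $2,\dots,m+1$ of $z$ is exactly the processing of $y$ transported by $(i,r)\mapsto(i+1,\iota_a(r))$. The one point to check is the invisible cloud, which sits at $(i,b_i)$: the left endpoint of cycle $i$ of $y$ becomes $\iota_a(b_i)$, the left endpoint of cycle $i+1$ of $z$. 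Free cells correspond because $\iota_a$ is injective. So after processing rows $2,\dots,m+1$ the cloud columns are $\iota_a(\mathcal C(y))$, and the snow cells in rows $\geq 2$, together with the matching invisible clouds, are in bijection with those of $\snow(y)$.

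Next I would process row $1$. Its free cells are the columns in $\iota_a([a])\setminus\iota_a(\mathcal C(y))=\iota_a(F_a(\mathcal C(y)))$. If $F_a(\mathcal C(y))\neq\varnothing$, the rightmost free cell is $\iota_a(\widehat F_a(\mathcal C(y)))$ and becomes a dark cloud. If there are at least two free cells, an invisible cloud is placed at $(1,b_1)=(1,1)$. This is exactly $\Phi_a$, which proves $\mathcal C(z)=\Phi_a(\mathcal C(y))$.

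For the count, I would describe $\snow(z)$ as the disjoint union of three sets. The first is the image of $\snow(y)$ under $(i,r)\mapsto(i+1,\iota_a(r))$. The second is row $1$ of $\MD(z)$, namely $T_a$, which has $a$ cells. The third consists of the new snow cells in row $1$: these are the cells $(1,\iota_a(r))$ with $r\in\mathcal C(y)$ and $(1,\iota_a(r))\notin T_a$, together with the cell $(1,1)$ if an invisible cloud is placed there.

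A cloud placed in row $1$ adds no snow above it, and a cloud in row $1$ lies in $T_a$ or at $(1,1)$. The cell $(1,1)$ is not in $T_a$, and it is never already snow from below, since column $1$ of $z$ is the laser column of $b_1$. Hence $(1,1)$ is either empty or an invisible cloud, and in either case it contributes nothing to $\sraj$.

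For $r\in\mathcal C(y)$, the column $\iota_a(r)$ lies in $T_a$ exactly when $r\leq a$. So the cells added in row $1$ that are not invisible clouds number $a+|\{r\in\mathcal C(y):r>a\}|$. Invisible clouds of $y$ correspond to invisible clouds of $z$ in rows $\geq2$, which gives the stated difference.

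The main obstacle is making the "processing commutes with relabeling" step rigorous. Two things need checking. First, the invisible-cloud position $(i,b_i)$ is transported correctly and does not coincide with a $\MD$ cell or another cloud in a way that breaks the bijection. Second, the snow of $y$ extends to row $1$ of $z$ exactly in the column $\iota_a(r)$, with no double counting against $T_a$. I would handle both by an explicit cell-by-cell comparison, using that snow is defined columnwise and upward-closed.
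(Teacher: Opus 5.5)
Your proposal is correct and follows the same route as the paper. Both arguments transport the cloud construction on rows $2,\dots,m+1$ via Lemma~\ref{lem:diagram-recursion}, read off $\Phi_a$ from the free cells $\iota_a(F_a(\mathcal C(y)))$ in row $1$, and count the non-invisible cells of row $1$ as $a+|\{r\in\mathcal C(y):r>a\}|$. Your extra checks fill in details the paper leaves implicit: that the invisible cloud at $(i,b_i)$ is carried to $(i+1,\iota_a(b_i))$, the left endpoint of cycle $i+1$ of $z$, and that column $1$ receives no snow from below.
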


\begin{proof}
By Lemma~\ref{lem:diagram-recursion}, the set of columns with (invisible) dark clouds in row $2$ or below of $\snow(z)$ is $\iota_a(\mathcal C(y))$. If $F_a(\mathcal C(y)) \neq \emptyset$, then there exists a free cell in row $1$ of $\snow(z)$, thus we mark the rightmost free cell, which is in column $\iota_a(\widehat{F}_a(\mathcal C(y)))$. Furthermore, if $|F_a(\mathcal C(y))| \geq 2$, then there are at least two free cells, so we put an invisible dark cloud at $(1,1)$. Therefore, $\snow(z)$ has (invisible) dark clouds in columns $\Phi_a(\mathcal C(y))$.

By Lemma~\ref{lem:diagram-recursion} and Definition~\ref{def:snow-sraj}, $ \sraj(z)-\sraj(y)$ is exactly the number of cells that are not invisible dark clouds in the first row of $\snow(z)$, which is $a + |\{r\in\mathcal C(y):r>a\}|$, as $a$ is the number of cells from $\MD(z)$ originally and $|\{r\in\mathcal C(y):r>a\}|$ is the number of (invisible) dark clouds that will produce a snow cell in row $1$. 
\end{proof}

Recall that for $w \in S_n$, $\rho_w(t)$ is the length of a longest increasing subsequence of $w|_{>t}$. Such a longest increasing subsequence either uses $t+1$ or not, so $\rho_w(t) = \max\{L_w(t+1), \rho_w(t+1)\}$. If $\rho_w(r-1) \neq \rho_w(r)$, then $\rho_w(r-1) = \rho_w(r) +1$, and we call $r$ a \definition{drop} of $w$ in this case. Define the \definition{drop set} of $w$ as the set of drops of $w$:
\begin{align*}
     \mathcal R(w) &:= \{r\in[n]:\rho_w(r-1)=\rho_w(r)+1\},\\
    \mathcal R_t(w) &:= \{r\in \mathcal R(w):r >t\}.
\end{align*}

\begin{lem}
\label{lem:threshold-drop-count}
For $w \in S_n$ and $0 \leq t \leq n$, $ \rho_w(t) = |\mathcal R_t(w)|$.
\end{lem}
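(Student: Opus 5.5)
The statement $\rho_w(t)=|\mathcal R_t(w)|$ is a telescoping identity, and I would prove it by downward induction on $t$ from $t=n$ to $t=0$.

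\textbf{Base case.} At $t=n$, the word $w|_{>n}$ is empty. Hence $\rho_w(n)=0$. The set $\mathcal R_n(w)=\{r\in\mathcal R(w): r>n\}$ is also empty, since $\mathcal R(w)\subseteq[n]$.

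\textbf{Step-size fact.} For $1\le r\le n$, I will use the recursion $\rho_w(r-1)=\max\{L_w(r),\rho_w(r)\}$ noted just before the statement. From it I extract two consequences.
\begin{itemize}
    \item $\rho_w(r-1)\ge\rho_w(r)$.
    \item $\rho_w(r-1)\le\rho_w(r)+1$. To see this, take any increasing subsequence of $w$ starting at the letter $r$. Every letter after $r$ in it exceeds $r$, so deleting $r$ leaves an increasing subsequence of $w|_{>r}$. Hence $L_w(r)\le\rho_w(r)+1$.
\end{itemize}
Together these give $\rho_w(r-1)-\rho_w(r)\in\{0,1\}$. By definition of $\mathcal R(w)$, this difference equals the indicator $[r\in\mathcal R(w)]$.

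\textbf{Conclusion.} Telescoping over $r$ from $t+1$ to $n$ gives
\[
    \rho_w(t)=\rho_w(n)+\sum_{r=t+1}^{n}\bigl(\rho_w(r-1)-\rho_w(r)\bigr)=\sum_{r=t+1}^{n}[r\in\mathcal R(w)]=|\mathcal R_t(w)|.
\]

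The only point needing any care is the upper bound $L_w(r)\le\rho_w(r)+1$, and the deletion argument above handles it directly.
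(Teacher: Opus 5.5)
Your proof is correct and follows the paper's argument: both use $\rho_w(n)=0$ and telescope the differences $\rho_w(r-1)-\rho_w(r)\in\{0,1\}$ for $r=t+1,\ldots,n$. Your deletion argument for $L_w(r)\le\rho_w(r)+1$ spells out the step-size bound that the paper asserts without proof.
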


\begin{proof}
By definition, $\rho_w(r-1)-\rho_w(r) \in \{0,1\}$ and $\rho_w(r-1)-\rho_w(r) = 1$ if and only if $r \in  \mathcal R(w)$.
Since $\rho_w(n) = 0$, telescoping gives
\begin{align*}
    &\rho_w(t) = \sum_{r=t+1}^{n}
    \bigl(\rho_w(r-1)-\rho_w(r)\bigr) =
    |\{r\in\mathcal R(w):r>t\}| = |\mathcal R_t(w)|. \qedhere
\end{align*}
\end{proof}

We now define a recurrence on the drop sets. Our eventual goal is to show that the recurrences of $\mathcal C(z)$ and $\mathcal R(\omega(\code(z)))$ agree up to complement.

For \(C \subseteq [n]\), let $H_a(C) : = C \cap [a]$. If $H_a(C)\neq\varnothing$, let $\widehat{H}_a(C) := \max H_a(C)$. Define
\begin{equation*}
    \Psi_a(C):= \iota_a(C \setminus \{\widehat{H}_a(C)\}) \cup\{a+2\} \cup
    \begin{cases}
        \{1\},&|H_a(C)|\le1,\\
        \varnothing,&|H_a(C)|\geq 2, 
    \end{cases}
\end{equation*}
where $C \setminus \{\widehat{H}_a(C)\} = C$ if $H_a(C) =\varnothing$.

\begin{lem}
\label{lem:threshold-profile-recurrence}
Let $w$ be a regular paired word of $[n]$ and let $u=\TIns_a(w)$. Then
\begin{equation*}
    \mathcal R(u)=\Psi_a(\mathcal R(w)).
\end{equation*}
\end{lem}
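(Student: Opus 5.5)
The plan is to avoid tracking the drop set through the carrier moves and instead compute the whole function $t\mapsto\rho_u(t)$ directly from the exact values supplied by Lemma~\ref{lem:tight-insertion-exact}. By Lemma~\ref{lem:threshold-drop-count}, $\mathcal R(u)$ is determined by $\rho_u$. Lemma~\ref{lem:tight-insertion-exact} gives the starting-LIS lengths in $u$:
\[
L_u(\iota_a(r))=L_w(r),\qquad L_u(1)=H,\qquad L_u(q)=H-1,
\]
where $H=\rho_w(a)+2$ and $q=a+2$. Since $\rho_u(t)=\max\{L_u(x):x>t\}$, it only remains to say which letters of $u$ exceed $t$. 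Because $\iota_a(r)>t$ holds if and only if $r\ge t$ for $t\le a$, and if and only if $r>t-2$ for $t\ge a+2$, I expect the piecewise formula
\[
\rho_u(t)=\begin{cases}
\max(H,\rho_w(0)), & t=0,\\
\max(H-1,\rho_w(t-1)), & 1\le t\le a+1,\\
\rho_w(t-2), & t\ge a+2.
\end{cases}
\]
The case $t=a+1$ fits the middle line: the letters above $a+1$ are $q$ together with $\iota_a(r)$ for $r>a$, so $\rho_u(a+1)=\max(H-1,\rho_w(a))=H-1$, using $\rho_w(a)=H-2$.

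Next I would read off the drops $r$ of $u$ region by region, by comparing $\rho_u(r-1)$ with $\rho_u(r)$.

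\textbf{Large letters} ($r\ge a+3$). Here $r$ is a drop of $u$ exactly when $r-2\in\mathcal R(w)$. Writing $r=\iota_a(s)$, these are precisely the images $\iota_a(s)$ of the drops $s>a$ of $w$.

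\textbf{The letter $q$} ($r=a+2$). We compare $\rho_u(a+1)=H-1$ with $\rho_u(a+2)=\rho_w(a)=H-2$, so $q$ is always a drop.

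\textbf{Small letters} ($2\le r\le a+1$). Put $s=r-1\le a$, so that $r=\iota_a(s)$. The comparison is between $\max(H-1,\rho_w(s-1))$ and $\max(H-1,\rho_w(s))$. This is a drop if and only if $s\in\mathcal R(w)$ and $\rho_w(s)\ge H-1=\rho_w(a)+1$. By Lemma~\ref{lem:threshold-drop-count}, the second condition says there is a drop of $w$ in $(s,a]$. So these drops are exactly the images $\iota_a(s)$ for $s\in H_a(\mathcal R(w))\setminus\{\widehat H_a(\mathcal R(w))\}$.

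\textbf{The letter $1$.} We compare $\max(H,\rho_w(0))$ with $\max(H-1,\rho_w(0))$, so $1$ is a drop iff $\rho_w(0)\le H-1$. Again by Lemma~\ref{lem:threshold-drop-count}, this means $|\mathcal R(w)\cap[a]|=\rho_w(0)-\rho_w(a)\le1$.

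Taking the union of the four cases reproduces the definition of $\Psi_a(\mathcal R(w))$ term by term.

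The main obstacle is deriving the piecewise formula for $\rho_u$ cleanly. In particular, one has to check that the boundary value $t=a+1$ and the edge case $a=0$ (where the middle range reduces to the single value $t=1$) agree with the uniform middle formula, and that no letter of $u$ has been left out of the ranges. Once that formula is in place, the drop computations are routine telescoping using Lemma~\ref{lem:threshold-drop-count}.
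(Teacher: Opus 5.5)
Your proposal is correct and follows essentially the same route as the paper: both derive the same piecewise formula for $\rho_u$ from the exact values in Lemma~\ref{lem:tight-insertion-exact}. The only difference is in the final bookkeeping. The paper checks that the tail counts $|\mathcal R_t(u)|$ and $|\{r\in\Psi_a(\mathcal R(w)):r>t\}|$ agree for every $t$, whereas you identify each drop directly by comparing $\rho_u(r-1)$ with $\rho_u(r)$ region by region, and the two are equivalent.
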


\begin{proof}
Let $q = a+2$. By Lemma~\ref{lem:tight-insertion-exact}, $u$ is a regular paired word of $[n+2]$ and
\begin{align*}
    &L_u(\iota_a(r))=L_w(r) \quad \text{for }r \in [n],\\
    &L_u(1)=\rho_w(a)+2, \text{ and } L_u(q)=\rho_w(a)+1.
\end{align*}
It follows that
\begin{align*}
    \rho_u(0) &=\max\{\rho_w(0),\rho_w(a)+2\},\\
    \rho_u(t) &=\max\{\rho_w(t-1),\rho_w(a)+1\} &&\text{for } 1\leq t <  q,\\
    \rho_u(t)&=\rho_w(t-2) &&\text{for } q \leq t\leq n+2.
\end{align*}
To prove the statement, it suffices to show that 
\begin{align*}
    |\mathcal R_t(u)| = |\{r\in\Psi_a(\mathcal R(w)):r>t\}| \quad \text{for all }\,0 \leq t \leq n+2.
\end{align*}

If $t \geq q$, then 
\begin{align*}
     \{r\in\Psi_a(\mathcal R(w)):r>t\} = \iota_a(\mathcal R_{t-2}(w))
\end{align*}
so by Lemma~\ref{lem:threshold-drop-count},
\begin{align*}
    |\{r\in\Psi_a(\mathcal R(w)):r>t\}| =|\mathcal R_{t-2}(w)| =\rho_w(t-2) = \rho_u(t) =  |\mathcal R_t(u)|.
\end{align*}

Now suppose $1\le t < q$. Define $k_t:=|(\mathcal R(w)\cap[t,a])|$. Then by Lemma~\ref{lem:threshold-drop-count}, $\rho_w(t-1)=\rho_w(a)+k_t$. We split into cases based on whether $k_t$ is zero or not.

If $k_t = 0$, then $\rho_w(t-1)=\rho_w(a)$, so
\begin{align*}
    \rho_u(t) = \max\{\rho_w(t-1), \rho_w(a) +1\} = \rho_w(a) +1 = \rho_w(t-1) +1.
\end{align*}
Furthermore, since there are no drops in $[t,a]$,
\begin{align*}
    \{r\in\Psi_a(\mathcal R(w)):r>t\} = \iota_a(\mathcal R_{t-1}(w)) \cup \{q\},
\end{align*}
so by Lemma~\ref{lem:threshold-drop-count},
\begin{align*}
    |\{r\in\Psi_a(\mathcal R(w)):r>t\}| = |\mathcal R_{t-1}(w)| + 1 = \rho_w(t-1) + 1 = \rho_u(t) = |\mathcal{R}_t(u)|.
\end{align*}
If $k_t >0$, then $\rho_w(t-1)=\rho_w(a) + k_t$, so
\begin{align*}
    \rho_u(t) = \max\{\rho_w(t-1), \rho_w(a) + 1\} = \rho_w(t-1).
\end{align*}
Furthermore, since there exists at least one drop in $[t,a]$,
\begin{align*}
     \{r\in\Psi_a(\mathcal R(w)):r>t\} = \iota_a(\mathcal R_{t-1}(w) \setminus \{\widehat{H}_a(\mathcal{R}(w))\}) \cup \{q\},
\end{align*}
so by Lemma~\ref{lem:threshold-drop-count},
\begin{align*}
    |\{r\in\Psi_a(\mathcal R(w)):r>t\}| = |\mathcal R_{t-1}(w)| -1 + 1 = \rho_w(t-1) = \rho_u(t) = |\mathcal{R}_t(u)|.
\end{align*}

Finally, suppose $t=0$. By Lemma~\ref{lem:threshold-drop-count}, $\rho_w(0)=\rho_w(a)+ |H_a(\mathcal{R}(w))|$. We split into cases based on whether $|H_a(\mathcal{R}(w))| \leq 1$.

If $|H_a(\mathcal{R}(w))| \leq 1$, then $\rho_w(0) < \rho_w(a) +2$, so
\begin{align*}
    \rho_u(0) =\max\{\rho_w(0),\rho_w(a)+2\} = \rho_w(a)+2.
\end{align*}
Therefore, 
\begin{align*}
    \Psi_a(\mathcal{R}(w)) = \iota_a(\mathcal{R}(w) \setminus \{\widehat{H}_a(\mathcal{R}(w))\}) \cup\{a+2\} \cup \{1\},
\end{align*}
where $\mathcal{R}(w) \setminus \{\widehat{H}_a(\mathcal{R}(w))\} = \mathcal{R}_a(w)$. So by Lemma~\ref{lem:threshold-drop-count},
\begin{align*}
    |\Psi_a(\mathcal{R}(w))| = |\iota_a(\mathcal{R}_a(w))| + 2 = \rho_w(a) + 2 = \rho_u(0) = |\mathcal{R}(u)|.
\end{align*}
If $|H_a(\mathcal{R}(w))| \geq 2$, then $\rho_w(0) \geq \rho_w(a) +2$, so
\begin{align*}
     \rho_u(0) =\max\{\rho_w(0),\rho_w(a)+2\} = \rho_w(0).
\end{align*}
Therefore,
\begin{align*}
    \Psi_a(\mathcal{R}(w)) = \iota_a(\mathcal{R}(w) \setminus \{\widehat{H}_a(\mathcal{R}(w))\}) \cup\{a+2\}.
\end{align*}
So by Lemma~\ref{lem:threshold-drop-count},
\begin{align*}
    &|\Psi_a(\mathcal{R}(w))| = |\iota_a(\mathcal{R}(w))| -1 + 1 = \rho_w(0) = \rho_u(0) = |\mathcal{R}(u)|. \qedhere
\end{align*}
\end{proof}

We now show that the recurrences $\Phi_a$ and $\Psi_a$ are related by complement.

\begin{lem}
\label{lem:complement-intertwining}
Let $C \subseteq [n]$ and $0 \leq a \leq n$, then
\begin{align*}
    [n+2] \setminus \Phi_a(C) = \Psi_a([n]\setminus C).
\end{align*}
\end{lem}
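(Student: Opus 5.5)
This is a direct set-theoretic verification. Write $D := [n]\setminus C$. The quantities governing $\Psi_a(D)$ are $H_a(D) = D\cap[a] = [a]\setminus C = F_a(C)$, so
\[
H_a(D) = F_a(C), \qquad \widehat{H}_a(D) = \widehat{F}_a(C)
\]
whenever either set is nonempty. The same threshold on $|F_a(C)|$ controls both conditional pieces: $\Phi_a$ adds $1$ exactly when $|F_a(C)|\ge 2$, while $\Psi_a$ adds $1$ exactly when $|H_a(D)|\le 1$. So the two definitions should be complementary piece by piece.

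I would partition $[n+2]$ into three disjoint parts: the letter $1$, the letter $q=a+2$, and the image $\iota_a([n]) = [n+2]\setminus\{1,q\}$, using that $\iota_a$ is injective and misses exactly $1$ and $q$. Then I check membership on each part separately.

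\emph{The letter $q$.} It never lies in $\Phi_a(C)$, since $\iota_a(C)$ and $\iota_a(\widehat F_a(C))$ lie in the image of $\iota_a$, and $1\neq q$ because $a\ge0$. It always lies in $\Psi_a(D)$. So $q$ is on the correct side of the equation.

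\emph{The letter $1$.} We have $1\in\Phi_a(C)$ if and only if $|F_a(C)|\ge2$. On the other side, $1\in\Psi_a(D)$ if and only if $|H_a(D)|\le1$, which is the same as $|F_a(C)|\le 1$. These conditions are complementary, as required.

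\emph{Letters $\iota_a(r)$ with $r\in[n]$.} By injectivity of $\iota_a$, we have $\iota_a(r)\in\Phi_a(C)$ if and only if $r\in C\cup\{\widehat F_a(C)\}$, where the second set is omitted when $F_a(C)$ is empty. Likewise, $\iota_a(r)\in\Psi_a(D)$ if and only if $r\in D\setminus\{\widehat H_a(D)\}$.

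Since $\widehat F_a(C)=\widehat H_a(D)$ lies in $D$, the complement of $C\cup\{\widehat F_a(C)\}$ in $[n]$ is exactly $D\setminus\{\widehat H_a(D)\}$. When $F_a(C)=\varnothing$, both adjustments are vacuous, consistent with the convention stated after the definition of $\Psi_a$.

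Combining the three parts gives $[n+2]\setminus\Phi_a(C)=\Psi_a([n]\setminus C)$. The only point needing care is the empty or singleton edge cases and the conventions for $\widehat F_a$ and $\widehat H_a$; there is no real obstacle beyond that bookkeeping.
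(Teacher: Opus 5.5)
Your proof is correct and follows essentially the same route as the paper: you identify $H_a([n]\setminus C)=F_a(C)$ and then check that $[n+2]$ is split between $\Phi_a(C)$ and $\Psi_a([n]\setminus C)$. As in the paper, you do this by treating $1$, $q=a+2$, and the image $\iota_a([n])$ separately, with $\widehat F_a(C)=\widehat H_a([n]\setminus C)$ handled as the one exceptional letter.
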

\begin{proof}
Let $R=[n]\setminus C$ and $q = a+2$. Recall that $F_a(C)=[a]\setminus C$ and $H_a(R)=R\cap[a]$ by definition. Then we have 
\begin{align*}
    F_a(C) = [a]\setminus C = ([n]\setminus C)\cap[a] = R\cap[a] = H_a(R).
\end{align*}
Consequently, $|F_a(C)|=|H_a(R)|$ and $\widehat{F}_a(C)=\widehat{H}_a(R)$ whenever they are non-empty.

The statement is now equivalent to showing $\Phi_a(C) \sqcup \Psi_a(R) = [n+2]$. We recall the definitions of $\Phi_a(C)$ and $\Psi_a(R)$ as
\begin{align*}
     \Phi_a(C)
    :={}&\iota_a(C)  \, \cup \,
    \begin{cases}
        \{\iota_a(\widehat{F}_a(C))\},&F_a(C)\neq\varnothing,\\
        \varnothing,&F_a(C)=\varnothing,
    \end{cases}  \, \cup \, 
    \begin{cases}
        \{1\},&|F_a(C)|\ge2,\\
        \varnothing,&|F_a(C)|\le1.
    \end{cases}\\
    \Psi_a(R):= &\iota_a(R \setminus \{\widehat{H}_a(R)\}) \cup\{a+2\} \cup
    \begin{cases}
        \{1\},&|H_a(R)|\leq 1,\\
        \varnothing,&|H_a(R)|\geq 2.
    \end{cases}
\end{align*}
It is clear that $q$ is always in $\Psi_a(R)$ and never in $\Phi_a(C)$, and that $1$ is in exactly one of $\Psi_a(R)$ or $\Phi_a(C)$ as $|F_a(C)|=|H_a(R)|$. It remains to consider the letters in $[n+2] \setminus \{1,q\}$.

Let $r \in [n]$, then $\iota_a(r) \in \iota_a([n]) = [n+2] \setminus \{1,q\}$.
If $F_a(C) \neq \varnothing$ and $r = \widehat{F}_a(C)$, then $\iota_a(r) \in \Phi_a(C)$ and $\iota_a(r) = \iota_a(\widehat{H}_a(R)) \notin \Psi_a(R)$. Now suppose $F_a(C) = \varnothing$ or $ r \neq \widehat{F}_a(C)$, then
\begin{align*}
    r \in C, \, r \notin R \implies \iota_a(r) \in \Phi_a(C), \, \iota_a(r) \notin \Psi_a(R),\\
    r \notin C, \, r \in R \implies \iota_a(r) \notin \Phi_a(C), \, \iota_a(r) \in \Psi_a(R).
\end{align*}
Every element in $[n+2] \setminus \{1,q\}$ is $\iota_a(r)$ for some $r\in [n]$, so we are done.
\end{proof}

\begin{prop}
\label{p:cloud-drop-complement}
Let $z\in\ifpf_{2m}$, then
\begin{align*}
    \mathcal R\left(\omega(\code(z))\right) = [2m]\setminus\mathcal C(z).
\end{align*}
\end{prop}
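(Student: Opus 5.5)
We argue by induction on $m$, the number of cycles of $z$, using the match-code recurrence $z=(a,y)$. The idea is that both sides obey recurrences already established: Lemma~\ref{lem:cloud-recurrence} governs the cloud columns, Lemma~\ref{lem:threshold-profile-recurrence} governs the drop sets, and Lemma~\ref{lem:complement-intertwining} says these recurrences are intertwined by complementation.

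\textbf{Base case.} For $m=0$ both sides are empty: $\omega(\varnothing)=\varnothing$ has no drops, and $\MD$ has no clouds.

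If one prefers to start at $m=1$, it can be checked directly. Here $z=(1,2)$ and $\code(z)=(0)$. Since $\omega(0)=(1,2)$, we have $\rho(0)=2$, $\rho(1)=1$ and $\rho(2)=0$, so $\mathcal R=\{1,2\}$. On the other side, $\MD(z)$ is empty, so $\mathcal C(z)=\varnothing$, and the complement is indeed $[2]$.

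\textbf{Inductive step.} Write $z=(a,y)$ with $y\in\ifpf_{2m-2}$ and $0\le a\le 2m-2$, and set $w=\omega(\code(y))$. By Definition~\ref{def:omega-recursion}, $\omega(\code(z))=\TIns_a(w)$. By Lemma~\ref{lem:omega-regular}, $w$ is regular, so Lemma~\ref{lem:threshold-profile-recurrence} applies and gives
\[
    \mathcal R(\omega(\code(z)))=\Psi_a(\mathcal R(w)).
\]
By the induction hypothesis, $\mathcal R(w)=[2m-2]\setminus\mathcal C(y)$. Substituting,
\[
    \mathcal R(\omega(\code(z)))=\Psi_a\bigl([2m-2]\setminus\mathcal C(y)\bigr).
\]
We now apply Lemma~\ref{lem:complement-intertwining} with $n=2m-2$ and $C=\mathcal C(y)$; its hypotheses $C\subseteq[n]$ and $0\le a\le n$ hold. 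This rewrites the right-hand side as $[2m]\setminus\Phi_a(\mathcal C(y))$. Finally, Lemma~\ref{lem:cloud-recurrence} gives $\Phi_a(\mathcal C(y))=\mathcal C(z)$, which completes the induction.

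\textbf{Expected obstacle.} The main point to check is bookkeeping consistency between the cited lemmas.
\begin{itemize}
    \item Lemma~\ref{lem:threshold-profile-recurrence} is stated for a regular paired word on $[n]$ with $u=\TIns_a(w)$, and the range $0\le a\le n$ must match the admissible range for the first match-code entry. This holds by validity of the code: $a\le 2(m-1)$.
    \item $\mathcal C(y)$ must be a subset of $[2m-2]$, so that the complement is taken in the correct ambient set. This is immediate, since clouds sit in columns of $\MD(y)\subseteq[m-1]\times[2m-2]$.
\end{itemize}
Once these are confirmed, the argument is purely formal.
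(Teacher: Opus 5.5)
Your proof is correct and follows the paper's argument: induction via $z=(a,y)$, chaining Lemma~\ref{lem:threshold-profile-recurrence}, the induction hypothesis, Lemma~\ref{lem:complement-intertwining}, and Lemma~\ref{lem:cloud-recurrence}. Your explicit appeal to Lemma~\ref{lem:omega-regular} for the regularity of $w$, and your checks that $a$ and $\mathcal C(y)$ lie in the right ranges, make explicit bookkeeping that the paper leaves implicit.
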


\begin{proof}
We prove by induction on $m$. The base case for the empty fixed-point-free involution is immediate. Let $z = (a,y)$, $w = \omega(\code(y))$, and $u=\omega(\code(z))$. So $u=\TIns_a(w)$ by Definition~\ref{def:omega-recursion}. Then 
\begin{align*}
     \mathcal R(u) =\Psi_a(\mathcal R(w)) =\Psi_a([2m-2]\setminus\mathcal C(y)) =[2m]\setminus\Phi_a(\mathcal C(y)) =[2m]\setminus\mathcal C(z).
\end{align*}
The first equality is Lemma~\ref{lem:threshold-profile-recurrence}; the second equality is the inductive hypothesis; the third equality is Lemma~\ref{lem:complement-intertwining}, and the final equality is Lemma~\ref{lem:cloud-recurrence}.
\end{proof}

\begin{prop}
\label{prop:raj-equals-twice-sraj}
Let $z \in \ifpf_{2m}$, then
\begin{align*}
    \raj\bigl(\omega(\code(z))\bigr) = 2 \cdot \sraj(z).
\end{align*}
\end{prop}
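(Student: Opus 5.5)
We argue by induction on $m$, using the recursion $z=(a,y)$ with $u=\omega(\code(z))=\TIns_a(w)$ and $w=\omega(\code(y))$. The base case $m=0$ is trivial, since both sides are $0$. For the inductive step we compute the increments of both sides and compare them.

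\textbf{Increment of $\raj$.} By Lemma~\ref{lem:omega-regular}, $w$ is regular, so Lemma~\ref{lem:tight-insertion-exact} gives $\mathcal J(u)=\mathcal J(w)+2\rho_w(a)+3$. Since $\raj(v)=\binom{n+1}{2}-\mathcal J(v)$ for $v\in S_n$, we get
\begin{align*}
    \raj(u)-\raj(w) = \binom{2m+1}{2}-\binom{2m-1}{2}-2\rho_w(a)-3 = 4m-1-2\rho_w(a)-3 = 2\bigl(2m-2-\rho_w(a)\bigr).
\end{align*}

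\textbf{Increment of $\sraj$.} By Lemma~\ref{lem:cloud-recurrence}, $\sraj(z)-\sraj(y)=a+|\{r\in\mathcal C(y):r>a\}|$, where $\mathcal C(y)\subseteq[2m-2]$. So it suffices to prove
\begin{align*}
    a+|\{r\in\mathcal C(y):r>a\}| = 2m-2-\rho_w(a).
\end{align*}
By Lemma~\ref{lem:threshold-drop-count}, $\rho_w(a)=|\mathcal R_a(w)|$. By Proposition~\ref{p:cloud-drop-complement} applied to $y$, $\mathcal R(w)=[2m-2]\setminus\mathcal C(y)$. Hence the elements of $[2m-2]$ greater than $a$ are partitioned into those in $\mathcal C(y)$ and those in $\mathcal R_a(w)$. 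This gives
\begin{align*}
    |\{r\in\mathcal C(y):r>a\}|+\rho_w(a)=2m-2-a,
\end{align*}
which is exactly the required identity. Here we use $0\le a\le 2m-2$, which holds because $\code(z)$ is valid.

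Combining the two increments with the inductive hypothesis $\raj(w)=2\sraj(y)$ yields $\raj(u)=2\sraj(z)$. The main potential obstacle is only bookkeeping: checking that $\raj$ is computed in $S_{2m}$ versus $S_{2m-2}$ consistently, and that Proposition~\ref{p:cloud-drop-complement} is applied at the correct size $2m-2$ for $y$. Both follow directly from the stated results.
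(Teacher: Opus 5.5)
Your proposal is correct and follows the paper's proof essentially step for step. Both argue by induction via $z=(a,y)$, take the $\raj$ increment $2(2m-2-\rho_w(a))$ from Lemma~\ref{lem:tight-insertion-exact}, and convert the $\sraj$ increment from Lemma~\ref{lem:cloud-recurrence} using Proposition~\ref{p:cloud-drop-complement} and Lemma~\ref{lem:threshold-drop-count}; your explicit appeal to Lemma~\ref{lem:omega-regular} and the check $a\le 2m-2$ are bookkeeping points the paper leaves implicit.
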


\begin{proof}
We prove by induction on $m$. The base case for the empty fixed-point-free involution is immediate.  Let $z = (a,y)$, $w = \omega(\code(y))$, and $u=\omega(\code(z))$. So by Definition~\ref{def:omega-recursion}, $u=\TIns_a(w)$. By Lemma~\ref{lem:tight-insertion-exact}, $\mathcal J(u)-\mathcal J(w) = 2\rho_w(a)+3$.
Since $w \in S_{2m-2}$ and $u\in S_{2m}$, the difference of their Rajchgot indices is
\begin{align}
    \raj(u)-\raj(w) =
    \left[\binom{2m+1}{2}-\binom{2m-1}{2}\right]
    -\bigl(2\rho_w(a)+3\bigr)\notag =2\bigl(2m-2-\rho_w(a)\bigr).
\end{align}

On the other hand, Lemma~\ref{lem:cloud-recurrence} and Proposition~\ref{p:cloud-drop-complement} give
\begin{align*}
    \sraj(z)-\sraj(y) &= a+ |\{r\in\mathcal C(y):r>a\}|\\
     &= a+ (2m-2 - a) -|\{r\in\mathcal R(w):r>a\}|\\
     & = 2m-2 -\rho_w(a)
\end{align*}
where the last equality follows from
Lemma~\ref{lem:threshold-drop-count}.  

By the inductive hypothesis, $\raj(w) = 2\cdot\sraj(y)$, so
\begin{align*}
    &\raj(u) = \raj(w)+2\bigl(\sraj(z)-\sraj(y)\bigr) = 2\sraj(z).  \qedhere
\end{align*}
\end{proof}

Finally, we can prove Theorem~\ref{t: main_degree} and Corollary~\ref{cor: Cm-reg}.
\begin{proof}[Proof of Theorem~\ref{t: main_degree}]
Let $w = \omega(\code(z))^{-1}$, then
    \begin{align*}
        \deg \fpfG_z(\x) = \deg \fG_w(\x) = \raj(w^{-1}) = 2 \cdot \sraj(z)
    \end{align*}
The first equality is Theorem~\ref{t: main_top}; the second equality is Theorem~\ref{t: psw raj}; and the final equality is Proposition~\ref{prop:raj-equals-twice-sraj}.
\end{proof}

\begin{proof}[Proof of Corollary~\ref{cor: Cm-reg}]
Since $X_z^{\mathrm{SS}}$ is Cohen--Macaulay and $\operatorname{ht}(I^\mathrm{SS}_z) = \ell_{\mathrm{FPF}}(z)$,
\begin{align*}
    \reg(X_z^{\mathrm{SS}}) = \deg_t K_z^{\mathrm{std}}(t) - \operatorname{ht}(I^\mathrm{SS}_z) = \frac{1}{2} \cdot \deg_{\x} \fpfG_z(\x) - \ell_{\mathrm{FPF}}(z) = \sraj(z) - \ell_{\mathrm{FPF}}(z).
\end{align*}
The first equality is Lemma~\ref{lem:CM-regularity-K-polynomial}; the second equality is Lemma~\ref{lem:standard-torus-degree-factor}; and the final equality is Theorem~\ref{t: main_degree}.
\end{proof}

\section{Maximal Castelnuovo--Mumford Regularity}
\label{s: max cm}
The goal of this section is to compute the maximal Castelnuovo--Mumford regularity of skew-symmetric matrix Schubert varieties indexed by $\ifpf_{2m}$. Pechenik, Speyer, and Weigandt answered this question for matrix Schubert varieties indexed by $S_n$.
\begin{thm}~\cite{psw24}*{Theorem~1.6}
\label{t: maximal CM-Reg for MSV}
    For $n \in \ZZ_{>0}$, define $k$ by $\binom{k}{2} \leq n < \binom{k+1}{2}$. Then
    \begin{align*}
        \max_{w \in S_n} \reg(X_w) = \binom{n+1}{2} - kn + \binom{k+1}{3}.
    \end{align*}
\end{thm}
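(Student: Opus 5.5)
The plan is to reduce the statement to a purely combinatorial optimization over $S_n$ and then solve that optimization by a grouping argument followed by a greedy count. Matrix Schubert varieties are Cohen--Macaulay of codimension $\ell(w)$, and their K-polynomial (in the standard grading) has degree $\deg \fG_w(\x) = \raj(w)$ by Theorem~\ref{t: psw raj}. So Lemma~\ref{lem:CM-regularity-K-polynomial} gives
\[
    \reg(X_w) = \raj(w) - \ell(w) = \binom{n+1}{2} - \bigl(\mathcal J(w) + \ell(w)\bigr).
\]
It therefore suffices to prove that
\[
    \min_{w\in S_n}\bigl(\mathcal J(w)+\ell(w)\bigr) = \sum_{v=1}^{k-1} v^2 + k\Bigl(n-\binom{k}{2}\Bigr).
\]
A short computation shows that $k\binom{k}{2}-\sum_{v<k}v^2=\binom{k+1}{3}$, so this minimum turns into the claimed formula.

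\emph{Lower bound.} Partition the positions of $w$ into classes $P_j=\{i : L_w(w(i))=j\}$, and let $a_j=|P_j|$. Each class is a decreasing subsequence of $w$: if $i<i'$ in $P_j$ had $w(i)<w(i')$, then prepending $w(i)$ to a longest increasing subsequence starting at $w(i')$ would give $L_w(w(i))\ge j+1$. So every pair inside $P_j$ is an inversion. This gives
\[
    \mathcal J(w)+\ell(w)\ \ge\ \sum_j \Bigl(j\,a_j+\binom{a_j}{2}\Bigr)=\sum_j\sum_{s=0}^{a_j-1}(j+s).
\]
The right-hand side is a sum of $n$ distinct elements of the multiset of values $j+s$ over pairs $(j,s)$ with $j\ge1$, $s\ge0$. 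In this multiset the value $v$ occurs exactly $v$ times, namely from the pairs $(j,s)$ with $j+s=v$. Hence the sum is at least the sum of the $n$ smallest entries of the multiset. Since $\binom{k}{2}\le n<\binom{k+1}{2}$, these smallest entries are all copies of $1,\dots,k-1$ together with $n-\binom{k}{2}$ copies of $k$.

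\emph{Construction.} To show the bound is attained, choose $a_j = k-j$ for $j<k$, and then add $1$ to $a_j$ for $n-\binom{k}{2}$ of the indices $j\in\{1,\dots,k\}$, taking the smallest such $j$ so that $(a_j)$ stays weakly decreasing. The multiset of values $j+s$ used by this choice is exactly the set of the $n$ smallest entries described above.

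Let $p$ be the largest index with $a_p>0$. Define $w$ as a concatenation of blocks: first a decreasing block of $a_p$ consecutive values, then a decreasing block of the next $a_{p-1}$ consecutive larger values, and so on, ending with a block of size $a_1$ carrying the largest values. Every letter in the block indexed by $j$ has $L_w = j$, since any letter of that block can be followed by exactly one letter from each later block. There are no inversions between different blocks, so $\ell(w)=\sum_j\binom{a_j}{2}$ and equality holds in the lower bound.

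The step needing the most care is the lower bound, specifically the passage from the classes $P_j$ to the multiset argument. One must make sure the bound really only uses intra-class inversions, and that ``sum of $n$ distinct elements of the multiset'' is the correct relaxation, with no constraint on the $a_j$ being lost. After that, the remaining work is the arithmetic identity, which is routine.
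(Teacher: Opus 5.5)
Your proof is correct, and it takes a genuinely different route. The paper does not prove this statement at all; it is quoted from \cite{psw24}, where the maximizers are identified as layered permutations. The relevant comparison is therefore with the paper's own fpf analogue in Section~\ref{s: max cm}, which optimizes in two stages. It first fixes the level sequence $V$ and shows by induction through tight insertion that $\ell_{\fpf}(z)\geq B(V)$, with unique minimizer $z_V$. Only then does it minimize $\varepsilon(V)$ over level sequences, by writing it as $\tfrac12\bigl(\mathcal F(p(V))+\mathcal F(q(V))\bigr)$ and completing the square.

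You collapse both stages into one. Your observation that each class $P_j$ is decreasing gives $\ell(w)\geq\sum_j\binom{a_j}{2}$ in one line. This is the ordinary analogue of $B(V)\leq\ell_{\fpf}(z)$, without the cross terms $\nu_h\nu_{h+1}$ that the pairing creates in the fpf setting. Your multiset relaxation, in which the value $v$ occurs exactly $v$ times, then replaces the completing-the-square computation. Since $\mathcal F(v)=2\sum_i\bigl(iv_i+\binom{v_i}{2}\bigr)$, it also gives a quicker proof of the value of $f_m$ in Lemma~\ref{l: everything equal}.

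The two-stage route buys finer structure, namely a unique length-minimizer for each fixed level sequence, and it handles the coupling between adjacent levels that the pairs impose. Your route is shorter and self-contained. With a little more care it also recovers the layered maximizers, since equality forces no inversions between distinct classes.

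One small gloss in your reduction: Lemma~\ref{lem:CM-regularity-K-polynomial} needs the degree of the standard-graded $K$-polynomial. Equating that with $\deg\fG_w$ uses the fact that the top-degree coefficients of $\fG_w$ all have the same sign (compare Lemma~\ref{lem:standard-torus-degree-factor}). Alternatively, just cite $\reg(X_w)=\raj(w)-\ell(w)$ from \cite{psw24} directly.
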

A permutation is \definition{layered} if it is the longest element in $S_\alpha = S_{\alpha_1} \times S_{\alpha_2} \times \cdots \times S_{\alpha_r}$ for some composition $\alpha$. Pechenik, Speyer, and Weigandt identified the maximizers in Theorem~\ref{t: maximal CM-Reg for MSV} as matrix Schubert varieties labeled by certain layered permutations~\cite{psw24}*{Theorem~5.9}.

Our strategy is to optimize regularity in two stages. We first fix
a level sequence and minimize the fpf-Coxeter length. We then optimize over level sequences to obtain a global maximum.

Recall that for a regular paired word $w = (b_1,c_1) \dots (b_m, c_m) \in S_{2m}$, its level sequence is $(h_1, \dots, h_m)$ where $h_1 \geq \cdots \geq h_m = 2$ and $L_w(b_i) = L_w(c_i) +1 =  h_i$.
\begin{lem}
\label{l: is a level sequence}
    An integral sequence $V = (h_1, \dots, h_m)$ is the level sequence of some regular paired word $w \in S_{2m}$ if and only if 
    \begin{align}
    \label{eq: what makes level sequence}
        \begin{cases}
             h_1 \geq h_2 \geq \cdots \geq h_m = 2, \text{ and}\\
             0 \leq h_i - h_{i+1} \leq 2.
        \end{cases}
    \end{align}
\end{lem}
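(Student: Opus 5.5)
The forward direction is already contained in the definition of regularity. If $V$ is the level sequence of a regular paired word, then $h_1\geq\cdots\geq h_m=2$ holds by definition. The bound $0\leq h_i-h_{i+1}\leq 2$ is the observation recorded right after Definition~\ref{def: paired_word}. For completeness, I would reprove it as follows. Since $c_i$ precedes $b_{i+1}$, either $c_i>b_{i+1}$, or $c_i<b_{i+1}$, in which case $h_i-1=L_w(c_i)\geq 1+h_{i+1}$. I would sharpen this with the LIS recursion: $L_w(b_i)=1+\max\{L_w(r): r>b_i \text{ to the right}\}$. The key point is that the largest $L$-value to the right of pair $i$ is $h_{i+1}$, attained at $b_{i+1}$. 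So $L_w(c_i)\leq 1+h_{i+1}$, which gives $h_i\leq h_{i+1}+2$.

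For the converse I would proceed by induction on $m$, building the word from right to left. I would prepend a new first pair to a regular paired word $w'$ of size $m-1$ with level sequence $(h_2,\ldots,h_m)$. The base case $m=1$ is the word $(1,2)$, which has level sequence $(2)$.

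The inductive step splits by $d=h_1-h_2\in\{0,1,2\}$. Prepending a pair $(b,c)$ to $w'$ after relabeling is exactly the operation $(1,q)\iota_a(w')$ without carrier moves. Its levels are computed as in Lemma~\ref{lem:insertion-lower-bounds}: $L(c)=\rho_{w'}(a)+1$ and $L(b)=\rho_{w'}(a)+2$, where $b$ is the new letter $1$. Relabeling by $\iota_a$ does not change the $L$-values of the old letters. So I need to choose $a$ with $\rho_{w'}(a)=h_1-2\in\{h_2-2,h_2-1,h_2\}$.

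Since $\rho_{w'}$ decreases by steps of at most $1$ from $\rho_{w'}(0)=L$-max $=h_2$ down to $\rho_{w'}(2m-2)=0$, and $h_1-2\leq h_2$, every value in $[0,h_2]$ is attained for some $a\in[0,2m-2]$. The new first pair $(1,a+2)$ then has level $h_1$. Because $h_1\geq h_2$, the level sequence $(h_1,h_2,\ldots,h_m)$ is weakly decreasing, and the new word is regular. One check remains: the new letter $1$ must not change the $L$-values of later letters. It cannot, since it sits in front of them.

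The main obstacle is the upper bound in the forward direction. I would argue it carefully via the fact that the maximum of $L_w$ over the suffix after pair $i$ equals $h_{i+1}$. This holds because within a regular word, levels weakly decrease and each $b_j$ has $L_w(b_j)=h_j\leq h_{i+1}$.
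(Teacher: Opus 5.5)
The forward direction is fine, and the set-up of your induction for the converse matches the paper's. In particular, using $\rho_{w'}(0)=h_2$, $\rho_{w'}(2m-2)=0$ and steps of size at most one to find $a$ with $\rho_{w'}(a)=h_1-2$ is exactly what the paper does.

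\textbf{The gap.} You claim that the bare prepended word $(1,q)\iota_a(w')$ has first pair of level $h_1$. Lemma~\ref{lem:insertion-lower-bounds} only gives lower bounds. For $q$ equality does hold, but the letter $1$ sits in front of every other letter and is smaller than all of them, so
$L(1)=1+\max\{L(q),h_2\}=\max\{h_1,h_2+1\}$.
This equals $h_1$ only when $h_1>h_2$. So your construction handles $d\in\{1,2\}$ but fails for $d=0$, a case you list but never treat separately.

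For instance, $V=(2,2)$ forces $w'=(1,2)$ and $a=2$. The word $(1,4)(2,3)$ then has $L(1)=3$ and $L(4)=1$, so it is not even regular. In fact no regular word with $h_1=h_2$ can have $b_1=1$, since that forces $L(b_1)\geq 1+L(b_2)=h_2+1$. The remaining check you flagged, that old letters keep their $L$-values, is not the issue. The missing check is the value of $L(1)$ itself.

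\textbf{How to repair it.} The paper uses $\TIns_a(w')$ instead of the bare prepended word. The carrier crosses every old pair of level $H=h_1$, so the letter $1$ ends up behind them. Lemma~\ref{lem:tight-insertion-exact} then supplies the exact values $L(1)=H$ and $L(q)=H-1$, together with the new level sequence.

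If you want to keep your elementary ``prepend a pair'' approach, prepend $(t+1,t+2)$ instead of $(1,t+2)$:
\begin{itemize}
\item choose $t$ with $\rho_{w'}(t)=h_1-2$;
\item shift the old letters greater than $t$ up by $2$;
\item then $L(t+2)=1+\rho_{w'}(t)=h_1-1$ and $L(t+1)=1+\max\{h_1-1,\rho_{w'}(t)\}=h_1$;
\item the old letters keep their $L$-values, since the relabeling is order-preserving and they follow the new pair.
\end{itemize}
This handles all three values of $d$ uniformly and avoids tight insertion altogether.
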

\begin{proof}
    The forward direction is immediate from definition. Conversely, suppose \eqref{eq: what makes level sequence} holds. We induct on $m$. The base case when $m=1$ is trivial. For $m>1$, the induction hypothesis gives a regular paired word $u \in S_{2m-2}$ with level sequence $(h_2,\ldots,h_m)$. Then $\rho_u(0)=h_2$ and $\rho_u(2m-2) = 0$. Since $ 0\leq h_1-2\leq h_2$ and each successive drop of \(\rho_u\) has size at most one, it attains every integer between \(h_2\) and \(0\). So there exists $a\in\{0,\ldots,2m-2\}$ such that $\rho_u(a)=h_1-2$. By Lemma~\ref{lem:tight-insertion-exact}, the word $w: = \TIns_a(u)$ is regular, and its level sequence is obtained by inserting $h_1$ into $(h_2,\ldots,h_m)$ which is exactly $V$.
\end{proof}

\begin{defn}
 For a level sequence $V = (h_1, \dots, h_m)$, define $\nu_h(V) := |\{i: h_i = h\}|$ to be its level counts. 
\end{defn}

\begin{lem}
\label{l: AHHHHHH}
    Let $z = (a,y) \in \ifpf_{2m}$ and let  $V = (h_1, \dots, h_{m-1})$ be the level sequence of $w := \omega(\code(y))$. Let $H := \rho_w(a) +2$, then
    \begin{align*}
        A_H(z) := \{r \in [2m-2] : L_w(r) \geq H-1\} \subseteq [a],
    \end{align*}
    and 
    \begin{equation}
    \label{eq: AH size}
        |A_H(z)| = \nu_{H-1}(V) + 2 \sum_{h\geq H}^{2m} \nu_h(V).
    \end{equation}
\end{lem}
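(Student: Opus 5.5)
The proof splits into two parts: the containment $A_H(z)\subseteq[a]$, and the count of $A_H(z)$ from the level sequence. Throughout, $w=\omega(\code(y))$ is regular by Lemma~\ref{lem:omega-regular}. Its letters therefore satisfy $L_w(b_i)=h_i$ and $L_w(c_i)=h_i-1$, where $V=(h_1,\ldots,h_{m-1})$ is its level sequence.

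\emph{Containment.} Suppose $r\in[2m-2]$ with $r>a$. By definition, $\rho_w(a)$ is the maximum of $L_w(s)$ over letters $s>a$. Hence
\[
L_w(r)\le \rho_w(a)=H-2<H-1,
\]
so $r\notin A_H(z)$. This gives $A_H(z)\subseteq[a]$ directly.

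\emph{Cardinality.} I count the letters $r$ with $L_w(r)\ge H-1$ pair by pair, using the values $L_w(b_i)=h_i$ and $L_w(c_i)=h_i-1$.
\begin{itemize}
\item If $h_i\ge H$, then both $b_i$ and $c_i$ qualify, since $h_i-1\ge H-1$. This contributes $2$ per pair, for a total of $2\sum_{h\ge H}\nu_h(V)$.
\item If $h_i=H-1$, then $b_i$ qualifies but $c_i$ does not, since $L_w(c_i)=H-2$. This contributes $\nu_{H-1}(V)$.
\item If $h_i\le H-2$, then neither letter qualifies.
\end{itemize}
Summing the three cases gives \eqref{eq: AH size}.

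Levels are at most $2m-2$, so the upper limit $2m$ in the sum is harmless. The only point that needs care is ensuring that $V$ is a genuine level sequence, that is, that $w$ is regular. This is exactly Lemma~\ref{lem:omega-regular}, so I expect no real obstacle. The sharper statement in Lemma~\ref{lem:tight-insertion-exact} is not needed here.
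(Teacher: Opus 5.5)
Your proposal is correct and follows essentially the same route as the paper. The containment comes from $L_w(r)\le\rho_w(a)=H-2$ for every letter $r>a$. The count comes from classifying each pair $(b_i,c_i)$ of the regular word $w$ by whether its level is at least $H$, equal to $H-1$, or smaller.
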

\begin{proof}
    For every $r \in [2m-2]$ such that $L_w(r) \geq H-1$, we have $r \leq a$. Otherwise, $r >a $ would imply that $\rho_w(a) \geq L_w(r) \geq H-1 = \rho_w(a) +1$, which is a contradiction. So the first statement holds.

    Let $w = (b_1,c_1) \dots (b_{m-1},c_{m-1})$. Since $w$ is regular, a pair $(b_i,c_i)$ of level $h$ satisfies $L_w(b_i) = h$ and $L_w(c_i) = h-1$. Therefore, if $h \geq H$, then $b_i,c_i \in A_H(z)$; if $h = H-1$, then $b_i \in A_H(z)$ and $c_i \notin A_H(z)$; and if $ h < H-1$, then $b_i, c_i \notin A_H(z)$. Hence the second statement follows.
\end{proof}

\begin{lem}
\label{l: coxeter length lower bound}
Let $z \in \ifpf_{2m}$ and let $V = (h_1, \dots, h_m)$ be the level sequence of $\omega(\code(z))$. Then
    \begin{align*}
        B(V) := \sum_{h = 0}^{2m} \left( 2\binom{\nu_h(V)}{2} + \nu_h(V) \nu_{h+1}(V) \right) \leq  \ell_{\fpf}(z).
    \end{align*}
\end{lem}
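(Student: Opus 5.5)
We argue by induction on $m$, peeling off the first entry of the match code. Write $z=(a,y)$, $w=\omega(\code(y))$ with level sequence $V'=(h_2,\dots,h_m)$, and $H=\rho_w(a)+2$. By Lemma~\ref{lem:tight-insertion-exact}, the level sequence $V$ of $\omega(\code(z))=\TIns_a(w)$ is obtained from $V'$ by inserting one extra part $H$. Hence $\nu_H(V)=\nu_H(V')+1$ and all other level counts agree. By Lemma~\ref{lem:diagram-recursion}, the first row of $\MD(z)$ is $T_a$, so
\begin{align*}
    \ell_{\fpf}(z)=a+\ell_{\fpf}(y).
\end{align*}
By the inductive hypothesis $\ell_{\fpf}(y)\geq B(V')$, so it suffices to prove
\begin{align*}
    a\;\geq\; B(V)-B(V').
\end{align*}

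The increment $B(V)-B(V')$ is a direct computation. Raising $\nu_H$ by one changes $2\binom{\nu_H}{2}$ by $2\nu_H(V')$. It also changes the two cross terms $\nu_{H-1}\nu_H$ and $\nu_H\nu_{H+1}$ by $\nu_{H-1}(V')$ and $\nu_{H+1}(V')$ respectively. Therefore
\begin{align*}
    B(V)-B(V')=\nu_{H-1}(V')+2\nu_H(V')+\nu_{H+1}(V').
\end{align*}

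To bound $a$ from below, use Lemma~\ref{l: AHHHHHH}: the set $A_H(z)$ is contained in $[a]$, so
\begin{align*}
    a\geq |A_H(z)|=\nu_{H-1}(V')+2\sum_{h\geq H}\nu_h(V').
\end{align*}
Since $2\sum_{h\geq H}\nu_h(V')\geq 2\nu_H(V')+2\nu_{H+1}(V')\geq 2\nu_H(V')+\nu_{H+1}(V')$, this gives $a\geq B(V)-B(V')$, which completes the induction. The base case $m=0$ is trivial, since $B$ of the empty sequence is $0$.

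The only delicate point is bookkeeping. The sum defining $B$ runs over $h$ from $0$ to $2m$, so we should confirm that enlarging the range from $2m-2$ to $2m$ adds only zero terms; levels are at most $2m$, so it does. We should also handle the boundary cases $H-1<2$ or $H$ exceeding all existing levels, where some $\nu$ vanish. These cases cause no trouble, because the formula from Lemma~\ref{l: AHHHHHH} still applies with zero counts. The main step is matching the increment of $B$ against Lemma~\ref{l: AHHHHHH}, and this has slack, because the $\nu_{H+1}$ term enters with coefficient $2$ in $|A_H(z)|$ but only coefficient $1$ in the increment of $B$.
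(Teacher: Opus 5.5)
Your proof is correct and is essentially the paper's argument: induct via $z=(a,y)$, use $\ell_{\fpf}(z)=a+\ell_{\fpf}(y)$, compute $B(V)-B(V')=\nu_{H-1}(V')+2\nu_H(V')+\nu_{H+1}(V')$, and bound this by $a\geq|A_H(z)|$ from Lemma~\ref{l: AHHHHHH}. One small slip: in general $V'$ is $V$ with one copy of $H$ removed, not $(h_2,\dots,h_m)$ (that holds only when $H=h_1$); you never use that labeling, so the argument is unaffected.
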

\begin{proof}
    We prove by induction on $m$, the base case when $m=1$ is trivial. Let $z = (a,y)$, $w = \omega(\code(y))$, $u = \omega(\code(z)) = \TIns_a(w)$, and $H = \rho_w(a)+2$. Let $V'$ be the level sequence of $w$.
    By Lemma~\ref{lem:tight-insertion-exact},
    \begin{align*}
        \nu_h(V) = 
        \begin{cases}
              \nu_h(V') +1 & \text{if } h = H, \\
              \nu_h(V')  & \text{if } h \neq H.
            \end{cases}
    \end{align*}
        Thus, the difference between $B(V)$ and $B(V')$ is giving by the terms that involve $\nu_H(V')$.
    \begin{align}
    \label{eq: lower bound difference}
        B(V) - B(V') &= 2 \left( \binom{\nu_H(V')+1}{2} - \binom{\nu_H(V')}{2} \right) + \nu_{H-1}(V') + \nu_{H+1}(V')\\
        &= 2 \nu_H(V') + \nu_{H-1}(V') + \nu_{H+1}(V'). \notag
    \end{align}
    Then by Lemma~\ref{l: AHHHHHH} and~\eqref{eq: lower bound difference},
    \begin{align*}
        a \geq |A_H(z)| = \nu_{H-1}(V') + 2 \sum_{h\geq H}^{2m} \nu_h(V') \geq 2\nu_{H}(V') + \nu_{H-1}(V') + \nu_{H+1}(V') = B(V) - B(V').
    \end{align*}
    Finally, by the inductive hypothesis, we have $\ell_{\fpf}(y) \geq B(V')$, so
    \begin{align*}
        &\ell_{\fpf}(z) = \ell_{\fpf}(y) + a \geq B(V') + a \geq B(V). \qedhere
    \end{align*}
\end{proof}

Among all fixed-point-free involutions whose maximal inverse Hecke atom has a given level sequence, we can construct a unique representative that minimizes fpf-Coxeter length. 
\begin{defn}
\label{d: zv}
    Given a level sequence $V = (h_1, h_2,  \dots, h_m)$, let 
    \begin{align*}
        a_i := 2\cdot |\{j>i: h_j = h_i\}| + |\{j > i: h_j = h_i -1\}| \quad \text{for }i \in [m],
    \end{align*}
    and let $z_V$ be the fixed-point-free involution in $\ifpf_{2m}$ with $\code(z_V) = (a_1, \dots, a_m)$. 
\end{defn}

Since each $a_i$ is bounded above by $2(m-i)$, $z_V$ is well-defined by Lemma~\ref{l: code composition bijection}.

\begin{lem}
    \label{l: zv has given level sequence and decreasing LIS}
    Let $V = (h_1, h_2,  \dots, h_m)$ be a level sequence and let $u = \omega(\code(z_V))$. Then $u$ has level sequence $V$, and
     \begin{align*}
         L_u(1) \geq L_u(2) \geq \cdots \geq L_u(2m).
     \end{align*}
\end{lem}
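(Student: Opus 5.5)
We argue by induction on $m$, peeling off the first entry of the code. Write $V=(h_1,\dots,h_m)$ and $V'=(h_2,\dots,h_m)$. The first step is to check that $\code(z_V)=(a_1,\code(z_{V'}))$. The numbers $a_i$ for $i\geq 2$ computed from $V$ depend only on $\{j>i\}$, so they coincide with the numbers computed from $V'$. Hence $z_V=(a_1,z_{V'})$ and $u=\TIns_{a_1}(w)$ with $w:=\omega(\code(z_{V'}))$.

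By the inductive hypothesis, $w$ has level sequence $V'$ and $L_w(1)\geq\cdots\geq L_w(2m-2)$. (The base case $m=1$ is $V=(2)$, $a_1=0$, $u=(1,2)$.) By Lemma~\ref{lem:tight-insertion-exact}, the level sequence of $u$ is $V'$ with the single part $H=\rho_w(a_1)+2$ inserted. So it suffices to prove $\rho_w(a_1)=h_1-2$.

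Monotonicity of $L_w$ makes $\rho_w$ explicit: $\rho_w(t)=L_w(t+1)$ for $t<2m-2$. In particular, the letters with $L_w(r)\geq h_1-1$ form an initial segment $[N]$. Counting as in the proof of Lemma~\ref{l: AHHHHHH}, a pair of level $h$ contributes $2$ letters to this segment if $h\geq h_1$ and $1$ letter if $h=h_1-1$. Since $h_1$ is the maximum of $V$, this gives
\[
N=2\nu_{h_1}(V')+\nu_{h_1-1}(V')=a_1.
\]
Therefore $L_w(a_1+1)\leq h_1-2$. We also need equality (or $\rho_w(a_1)=0$ when $a_1=2m-2$). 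Letter values of $L_w$ run over $\{1,\dots,h_2\}$ with steps of size at most $1$; this follows because $\rho_w$ drops by at most one at a time, and for monotone $L_w$ we have $\rho_w=L_w$ shifted. Moreover $h_1-2\leq h_2$. Hence the first letter after the segment has $L_w$ equal to exactly $h_1-2$ whenever $h_1-2\geq 1$. When $h_1=2$ we get $a_1=2m-2$ and $\rho_w(2m-2)=0$. In all cases $H=h_1$, so $u$ has level sequence $V$.

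It remains to prove monotonicity of $L_u$. Lemma~\ref{lem:tight-insertion-exact} gives $L_u(\iota_{a_1}(r))=L_w(r)$, $L_u(1)=h_1$, and $L_u(q)=h_1-1$ where $q=a_1+2$. The letters $2,\dots,q-1$ are $\iota_{a_1}(1),\dots,\iota_{a_1}(a_1)$. Their $L$-values are decreasing and at least $h_1-1$, since they lie in $[N]$. By regularity they are at most $h_1$, because $L_w\leq h_2\leq h_1$. Then comes $q$ with value $h_1-1$, followed by letters $\iota_{a_1}(r)$ with $r>a_1$, whose values are at most $h_1-2$ and decreasing. So the whole sequence $L_u(1)\geq\cdots\geq L_u(2m)$ is weakly decreasing.

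The main obstacle is the step $\rho_w(a_1)=h_1-2$, specifically the claim that the values of $L_w$ hit every integer down to $1$. I would derive it from the fact that $\rho_w(t)=\max\{L_w(t+1),\rho_w(t+1)\}$ changes by at most one per step, combined with $\rho_w(t)=L_w(t+1)$ under monotonicity.
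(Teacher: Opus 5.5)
Your proof is correct and follows essentially the same route as the paper: induction via $z_V=(a_1,z_{V'})$, using the inductive monotonicity of $L_w$ together with the level count from Lemma~\ref{l: AHHHHHH} to identify $a_1$ with the size of the initial segment $\{r: L_w(r)\geq h_1-1\}$, and the at-most-one-step drops of $\rho_w$ to conclude $\rho_w(a_1)=h_1-2$. The only difference is the order of the argument: the paper first defines $a'=\min\{t:\rho_w(t)=h_1-2\}$ and then shows $a'=a_1$ by counting, whereas you count first and then verify $\rho_w(a_1)=h_1-2$.
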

\begin{proof}
    We induct on $m$, the base case when $m=1$ is trivial.

    Let $z_V = (a_1,y)$, $w = \omega(\code(y))$, so $u = \TIns_{a_1}(w)$. Since $\code(y) = (a_2, \dots, a_m)$ is constructed from the level sequence $V' := (h_2, \dots, h_m)$, we have $y = z_{V'}$, so by the inductive hypothesis,
    \begin{align}
    \label{eq:LIS mono-decrease}
        L_w(1) \geq L_w(2) \geq \cdots \geq L_w(2m-2).
    \end{align}

    Let $ a' := \min\{t\in \{0,\dots,2m-2\} : \rho_w(t) = h_1-2\}$. It is well defined as $0\leq h_1 - 2 \leq h_2$ and $\rho_w(i)-1 \leq \rho_w(i+1) \leq \rho_w(i)$. Our goal is to show that $a'=a_1$. Let $z':=(a',y)$. By construction, $h_1 =\rho_w(a')+2$, so by Lemma~\ref{l: AHHHHHH}, $A_{h_1}(z')\subseteq[a']$. We claim that equality holds. If $a'= 0$, then the claim is trivial. If $a'>0$, minimality of $a'$ implies that $\rho_w(a'-1)=h_1-1$. Since $ \rho_w(a'-1)=\max\{L_w(a'),\rho_w(a')\}$ and $\rho_w(a')=h_1-2$, we get that $\rho_w(a'-1) = L_w(a')=h_1-1$. Therefore, by~\eqref{eq:LIS mono-decrease}, we have $L_w(r) \geq h_1 - 1$ for all $ r \leq a'$, so the claim follows.

    Since $h_1 \geq h_i$ for all $ i \in [m]$, by Lemma~\ref{l: AHHHHHH},
    \begin{align*}
        a' &=|A_{h_1}(z')| =\nu_{h_1-1}(V') + 2\sum_{h\geq h_1}^{2m}\nu_h(V') =\nu_{h_1-1}(V')+2\nu_{h_1}(V')\\
        &= \bigl|\{j>1:h_j=h_1-1\}\bigr|
      +2\bigl|\{j>1:h_j=h_1\}\bigr| =a_1.
    \end{align*}
Thus $\rho_w(a_1)=h_1-2$. By Lemma~\ref{lem:tight-insertion-exact}, the level sequence of $u$ is obtained by inserting $h_1$ into $V'$, which is exactly $V$. Lemma~\ref{lem:tight-insertion-exact} also gives
\begin{align}
\label{eq: final weakly decreasing LIS}
        (L_u(1), \dots, L_u(2m)) = (h_1, L_w(1), \dots, L_w(a_1), h_1-1, L_w(a_1+1), \dots, L_w(2m-2)).
\end{align}
We have  $L_w(1) \leq h_2 \leq h_1$. Since $A_{h_1}(z')=[a'] = [a_1]$, the entries $L_w(1), \dots, L_w(a_1)$ are at least $h_1-1$. Since $\rho_w(a')=h_1-2$, the entries $L_w(a_1+1), \dots, L_w(2m-2)$ are at most $h_1 -2$. Hence,~\eqref{eq: final weakly decreasing LIS} is a weakly decreasing sequence.
\end{proof}

\begin{lem}
    \label{l: zv unique min}
    Let $z \in \ifpf_{2m}$ and $V= (h_1, \dots, h_m)$ be the level sequence of $ u = \omega(\code(z))$. Then $\ell_{\fpf}(z_V) \leq \ell_{\fpf}(z)$ with equality if and only if $z_V = z$.
\end{lem}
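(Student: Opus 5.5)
The plan is to show that $\ell_{\fpf}(z_V) = B(V)$ and then sharpen the inductive proof of Lemma~\ref{l: coxeter length lower bound} to see that $B(V)=\ell_{\fpf}(z)$ forces $z=z_V$.

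\textbf{Step 1: $\ell_{\fpf}(z_V)=B(V)$.} Since $\ell_{\fpf}$ is the sum of the match code, $\ell_{\fpf}(z_V)=\sum_i a_i$. For each level $h$, the terms $2|\{j>i:h_j=h_i\}|$ summed over the $\nu_h$ indices with $h_i=h$ give $2\binom{\nu_h}{2}$. The terms $|\{j>i: h_j=h_i-1\}|$ summed over those indices give $\nu_h\nu_{h-1}$, because every level-$(h-1)$ index lies after every level-$h$ index (the sequence is weakly decreasing). Summing over $h$ gives exactly $B(V)$. By Lemma~\ref{l: zv has given level sequence and decreasing LIS}, the level sequence of $\omega(\code(z_V))$ is $V$. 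Combined with Lemma~\ref{l: coxeter length lower bound}, this yields $\ell_{\fpf}(z_V)=B(V)\le \ell_{\fpf}(z)$.

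\textbf{Step 2: equality forces $z=z_V$.} We induct on $m$. Write $z=(a,y)$, $w=\omega(\code(y))$ with level sequence $V'$, and $H=\rho_w(a)+2$. Then $V$ is $V'$ with $H$ inserted, and $h_1$ is the largest level.

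Suppose $\ell_{\fpf}(z)=B(V)$. Every inequality in the chain of Lemma~\ref{l: coxeter length lower bound} must then be an equality:
\begin{itemize}
\item $\ell_{\fpf}(y)=B(V')$, so by induction $y=z_{V'}$;
\item $a=|A_H(z)|$;
\item $2\sum_{h\ge H}\nu_h(V')=2\nu_H(V')+\nu_{H+1}(V')$, i.e.\ $\nu_{H+1}(V')=0$ and $\nu_h(V')=0$ for all $h\ge H+2$.
\end{itemize}
The third condition says no level of $V'$ exceeds $H$, so $H=h_1$. The first two conditions then give
\[
a=\nu_{h_1-1}(V')+2\nu_{h_1}(V')=a_1,
\]
which is the first entry of $\code(z_V)$. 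Since $\code(y)=\code(z_{V'})=(a_2,\dots,a_m)$, it follows that $\code(z)=\code(z_V)$, hence $z=z_V$ by Lemma~\ref{l: code composition bijection}.

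\textbf{Base case and obstacle.} For $m=1$ the only element is $z=(1,2)=z_V$. The main point requiring care is extracting $\nu_{H+1}(V')=0$ and the vanishing of all higher levels from the equality case. This rests on the fact that the coefficient $2$ in $2\sum_{h\ge H}\nu_h$ strictly exceeds the coefficient $1$ on $\nu_{H+1}$, so any pair of level at least $H+1$ in $V'$ would make the inequality strict.
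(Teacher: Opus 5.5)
Your proposal is correct and follows essentially the same route as the paper: compute $\ell_{\fpf}(z_V)=B(V)$ directly from the match code, then induct on $m$, using the equality case of the chain in Lemma~\ref{l: coxeter length lower bound}. Equality there forces $\ell_{\fpf}(y)=B(V')$, $a=|A_H(z)|$, and $\nu_{H+1}(V')+2\sum_{h\ge H+2}\nu_h(V')=0$, which gives $H=h_1$, $y=z_{V'}$ and $a=a_1$. (Your appeal in Step~1 to Lemma~\ref{l: zv has given level sequence and decreasing LIS} is harmless but not needed: the inequality $B(V)\le \ell_{\fpf}(z)$ is just Lemma~\ref{l: coxeter length lower bound} applied to $z$ itself.)
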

\begin{proof}
    Recall that $\code(z_V) = (a_1, \dots, a_m)$ where 
    \begin{align*}
          a_i = 2\cdot |\{j>i: h_j = h_i\}| + |\{j > i: h_j = h_i -1\}|.
    \end{align*}
    So by construction,
    \begin{align*}
        \ell_{\fpf}(z_V) = \sum_{i=1}^m a_i = 2 \sum_{h \geq 0} \binom{\nu_h(V)}{2} + \sum_{h \geq 0} \nu_h(V) \nu_{h+1} (V) = B(V),
    \end{align*}
    so $z_V$ achieves the lower bound in Lemma~\ref{l: coxeter length lower bound}. We now show that no other elements do.

    We induct on $m$, the base case when $m=1$ is trivial. Suppose $z \in \ifpf_{2m}$ satisfy $u:= \omega(\code(z))$ has level sequence $V$ and $\ell_{\fpf}(z) = B(V)$. Let $z= (a,y)$, $w = \omega(\code(y))$, and $ H = \rho_w(a) +2$. Let $V'$ be the level sequence $w$, then by~\eqref{eq: AH size} and~\eqref{eq: lower bound difference},
    \begin{align*}
        \ell_{\fpf}(z) - B(V) =  
        \left( \ell_{\fpf}(y) - B(V') \right) + \left( a - |A_H(z)| \right) + \nu_{H+1}(V') + 2\sum_{h \geq H + 2} \nu_h(V').
    \end{align*}
    On the RHS, the first term is nonnegative by Lemma~\ref{l: coxeter length lower bound}, the second term is nonnegative by Lemma~\ref{l: AHHHHHH}, and the remaining terms are nonnegative by definition. Since we assumed $\ell_{\fpf}(z) = B(V)$, all the terms on the RHS are identically zero. Every level in \(V'\) is at most \(H\). Since \(V\) is obtained by inserting \(H\) into \(V'\), its largest entry is \(H\). Thus \(H=h_1\) and \(V'=(h_2,\ldots,h_m)\). Also, $\ell_{\fpf}(y) = B(V')$ implies that $ y = z_{V'}$ by the inductive hypothesis. Finally, since $a = |A_H(z)|$ and $H = h_1 \geq h_i$ for $i \in \{2, \dots,m\}$, we have
    \begin{align*}
        a = |A_H(z)| = 2 \nu_{H}(V') + \nu_{H-1}(V') = 2\cdot |\{j>1: h_j = h_1\}| + |\{j > 1: h_j = h_1 -1\}| = a_1.
    \end{align*}
    Therefore, $ z = (a,y) = (a_1,z_{V'}) = z_V$, and we prove uniqueness. 
\end{proof}
The fixed-point-free involution \(z_V\) uniquely maximizes Castelnuovo--Mumford regularity among those whose maximal inverse Hecke atom has level sequence \(V\).
\begin{lem}
\label{l: zv maximize CM-Reg}
     Let $z \in \ifpf_{2m}$ and $V= (h_1, \dots, h_m)$ be the level sequence of $\omega(\code(z))$, then $\reg(X^\mathrm{SS}_z) \leq \reg(X^\mathrm{SS}_{z_V})$ with equality if and only if $z_V = z$.
\end{lem}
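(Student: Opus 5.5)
The plan is to write regularity as a difference of two quantities, one of which depends only on $V$. By Corollary~\ref{cor: Cm-reg},
\[
\reg(X^{\mathrm{SS}}_z)=\sraj(z)-\ell_{\fpf}(z).
\]
By Proposition~\ref{prop:raj-equals-twice-sraj}, $\sraj(z)=\tfrac12\raj(\omega(\code(z)))=\tfrac12\bigl(\binom{2m+1}{2}-\mathcal J(\omega(\code(z)))\bigr)$. So the first step is to show that $\mathcal J(u)$, for $u=\omega(\code(z))$, is determined by the level sequence $V$.

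This is immediate from regularity. Lemma~\ref{lem:omega-regular} says $u$ is regular, so the pair of level $h_i$ contributes $L_u(b_i)+L_u(c_i)=h_i+(h_i-1)$. Summing over pairs gives
\[
\mathcal J(u)=\sum_{i=1}^m(2h_i-1),
\]
which depends only on $V$. Hence $\sraj(z)$ depends only on $V$.

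Next I check that $z_V$ has the same level sequence. Lemma~\ref{l: zv has given level sequence and decreasing LIS} says $\omega(\code(z_V))$ has level sequence $V$. Therefore $\sraj(z_V)=\sraj(z)$.

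The comparison then reduces to the length term:
\[
\reg(X^{\mathrm{SS}}_{z_V})-\reg(X^{\mathrm{SS}}_z)=\ell_{\fpf}(z)-\ell_{\fpf}(z_V).
\]
By Lemma~\ref{l: zv unique min}, the right side is nonnegative, with equality if and only if $z=z_V$. That gives both the inequality and the equality case.

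I expect no serious obstacle. The only point needing care is checking that $\sraj$ depends only on $V$, and the regularity identity for $\mathcal J$ settles it.
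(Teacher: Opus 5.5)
Your proposal is correct and takes the same route as the paper. The paper likewise computes $\mathcal J(\omega(\code(z)))=\sum_i(2h_i-1)$ from regularity, concludes that $\sraj$ depends only on $V$, transfers this to $z_V$ via Lemma~\ref{l: zv has given level sequence and decreasing LIS}, and then reduces the comparison to $\ell_{\fpf}$ via Lemma~\ref{l: zv unique min}.
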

\begin{proof}
    Let $u = \omega(\code(z))$, then 
    \begin{align*}
        \mJ(u) = \sum_{i=1}^m (2h_i-1) = 2 \sum_{i=1}^m h_i - m.
    \end{align*}
    By Proposition~\ref{prop:raj-equals-twice-sraj},
    \begin{align*}
        \sraj(z) = \frac{1}{2}\raj(u) = m(m+1) - \sum_{i=1}^m h_i.
    \end{align*}
    Hence by Corollary~\ref{cor: Cm-reg} and Lemma~\ref{l: zv has given level sequence and decreasing LIS},
    \begin{align*}
        \reg(X^\mathrm{SS}_z) &= \sraj(z) - \ell_{\fpf}(z) = m(m+1) - \sum_{i=1}^m h_i - \ell_{\fpf}(z), \text{ and }\\
        \reg(X^\mathrm{SS}_{z_V}) &= \sraj(z_V) - \ell_{\fpf}(z_V) = m(m+1) - \sum_{i=1}^m h_i - \ell_{\fpf}(z_V).
    \end{align*}
    Then the statement follows from Lemma~\ref{l: zv unique min}.
\end{proof}

In particular, the proof of Lemma~\ref{l: zv maximize CM-Reg} implies that the problem of finding the maximal Castelnuovo--Mumford regularity reduces to finding a level sequence $V=(h_1, \dots, h_m)$ that minimizes the following quantity.
\begin{align}
\label{eq: ev def}
    \varepsilon(V) :=& \sum_{i=1}^m h_i + \ell_{\fpf}(z_V) = \sum_{i=1}^m h_i + B(V)\\  \notag
    =& \sum_{i=1}^m h_i + 2 \sum_{h = 0}^{2m} \binom{\nu_h(V)}{2} + \sum_{h = 0}^{2m} \nu_h(V) \nu_{h+1}(V)\\ \notag
    =& \sum_{h = 0}^{2m} h \cdot \nu_h(V) + 2 \sum_{h = 0}^{2m} \binom{\nu_h(V)}{2} + \sum_{h = 0}^{2m} \nu_h(V) \nu_{h+1}(V)\\ \notag
    =& \sum_{h = 0}^{2m} \left( \nu_h(V)^2 + (h-1) \nu_h(V) \right) + \sum_{h = 0}^{2m} \nu_h(V) \nu_{h+1}(V). \notag
\end{align}

We optimize $\varepsilon(V)$ over level sequences by
expressing it as the average of two quadratic functions.
\begin{defn}
    Let $V = (h_1, \dots, h_m)$ be a level sequence. We group adjacent level multiplicities in the following two ways. For $i \geq 1$, define
    \begin{align*}
    p_i(V) := \nu_{2i}(V) + \nu_{2i+1}(V) \quad &\text{and} \quad q_i(V) := \nu_{2i-1}(V) + \nu_{2i}(V);\\
    p(V) := (p_1(V),p_2(V), \dots) \quad &\text{and} \quad q(V) := (q_1(V),q_2(V), \dots).
    \end{align*}
    Since $\nu_1(V) = 0$ for any level sequence, $\sum_{i \geq 1}p_i = \sum_{i \geq 1}q_i = m$.
    
    Furthermore, for a finitely supported nonnegative integral sequence $v = (v_1, v_2, \dots)$, define
    \begin{align*}
        \mathcal{F}(v) := \sum_{i \geq 1}(v_i^2 + (2i-1)v_i),
    \end{align*}
    and define the minimum of $\mathcal{F}(v)$ across sequences that sum to $m$ as
    \begin{align*}
        f_m := \min_{\sum_{i} v_i = m} \mathcal{F}(v).
    \end{align*}
\end{defn}

\begin{lem}
\label{l: ev = p + q}
    Let $V = (h_1, \dots, h_m)$ be a level sequence, then 
    \begin{align*}
        \varepsilon(V) = \frac{\mathcal{F}(p(V)) +\mathcal{F}(q(V))}{2}.
    \end{align*}
    In particular, $\varepsilon(V) \geq f_m$ with equality if and only if $\mathcal{F}(p(V)) = \mathcal{F}(q(V)) = f_m$.
\end{lem}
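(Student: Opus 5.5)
The plan is to expand both $\mathcal F(p(V))$ and $\mathcal F(q(V))$ in terms of the level counts $\nu_h := \nu_h(V)$ and then compare their sum with twice the last line of~\eqref{eq: ev def}.

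First observe that every level satisfies $h_i\ge 2$, so $\nu_0=\nu_1=0$ and the sums in~\eqref{eq: ev def} may be taken over $h\geq 2$. Since $p_i = \nu_{2i}+\nu_{2i+1}$, the square $p_i^2$ contributes
\[
\nu_{2i}^2+\nu_{2i+1}^2+2\nu_{2i}\nu_{2i+1},
\]
and the linear term $(2i-1)p_i$ contributes
\[
(2i-1)\nu_{2i}+(2i-1)\nu_{2i+1}.
\]
Writing $h=2i$ or $h=2i+1$, the linear coefficient of $\nu_h$ in $\mathcal F(p)$ is therefore $h-1$ for even $h$ and $h-2$ for odd $h$.

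Similarly, $q_i=\nu_{2i-1}+\nu_{2i}$ gives squares $\nu_{2i-1}^2+\nu_{2i}^2$, the cross term $2\nu_{2i-1}\nu_{2i}$, and linear coefficient $2i-1$. With $h=2i-1$ or $h=2i$, this coefficient is $h$ for odd $h$ and $h-1$ for even $h$.

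Adding the two expansions:
\begin{itemize}
\item Each $\nu_h^2$ appears exactly once in each, giving $2\nu_h^2$.
\item The cross terms $2\nu_h\nu_{h+1}$ occur in $\mathcal F(p)$ for even $h$ and in $\mathcal F(q)$ for odd $h$, so together they give $2\sum_h \nu_h\nu_{h+1}$.
\item The linear coefficient of $\nu_h$ sums to $2(h-1)$ for both parities: $(h-1)+(h-1)$ for even $h$, and $(h-2)+h$ for odd $h$.
\end{itemize}
Hence $\mathcal F(p(V))+\mathcal F(q(V))=2\varepsilon(V)$ by the last line of~\eqref{eq: ev def}.

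For the second statement, $p(V)$ and $q(V)$ are finitely supported nonnegative integral sequences, and each sums to $m$ (as noted, this uses $\nu_1=0$). So by the definition of $f_m$, both $\mathcal F(p(V))\ge f_m$ and $\mathcal F(q(V))\ge f_m$. Averaging gives $\varepsilon(V)\ge f_m$, and equality holds exactly when both inequalities are equalities.

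The main obstacle is only the careful parity bookkeeping of the linear and cross terms, in particular checking that the boundary terms at $h=0,1$ vanish, which follows from $\nu_0=\nu_1=0$.
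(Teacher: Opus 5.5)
Your proposal is correct and follows essentially the same route as the paper: expand $\mathcal{F}(p(V))$ and $\mathcal{F}(q(V))$ in the level counts $\nu_h$, match the squares and cross terms by parity, and check that the linear coefficients add to $2(h-1)$ (namely $(h-1)+(h-1)$ for even $h$, and $(h-2)+h$ for odd $h$). Your explicit use of $\nu_0=\nu_1=0$ for the boundary terms and your averaging argument for the second statement spell out what the paper leaves implicit.
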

\begin{proof}
    The quadratic terms give 
    \begin{align}
    \label{eq: quadratic term}
        \frac{1}{2}\sum_{i \geq 1} (p_i(V)^2 + q_i(V)^2) = \sum_{h = 0}^{2m} \nu_h(V)^2 + \sum_{h=0}^{2m} \nu_h(V) \nu_{h+1}(V).
    \end{align}
    Each $\nu_h(V)^2$ appears once in both $\mathcal{F}(p(V))$ and $\mathcal{F}(q(V))$. And each $\nu_h(V) \nu_{h+1}(V)$ appears either in $\mathcal{F}(p(V))$ or $\mathcal{F}(q(V))$ with multiplicity two.

    The linear terms give 
    \begin{align}
    \label{eq: linear term}
        \frac{1}{2} \sum_{i \geq 1} (2i-1)(p_i + q_i) = \sum_{h =0}^{2m} (h-1)\nu_h(V).
    \end{align}
    If $h$ is even, then $h = 2i$ for some $i$ and $\nu_h(V)$ has coefficient $2i-1$ in both $\mathcal{F}(p(V))$ and $\mathcal{F}(q(V))$. If $h$ is odd, then $h =2i+1$ for some $i$ and $\nu_h(V)$ has coefficient $2i-1$ and $2i+1$ in $\mathcal{F}(p(V))$ and $\mathcal{F}(q(V))$ respectively.

    The equality in the statement then follows from summing~\eqref{eq: quadratic term} and~\eqref{eq: linear term}, and the second statement is immediate from the equality.
\end{proof}

\begin{defn}
\label{d :zs}
    For $m \in \ZZ_{>0}$, define $k$ by $\binom{k}{2} \leq m < \binom{k+1}{2}$. Let $r := m - \binom{k}{2}$, so $0 \leq  r < \binom{k+1}{2} - \binom{k}{2} = k$. For any subset $S \subseteq [k]$ with $|S| = r$, define
    \begin{align*}
        v_i = \begin{cases}
        k-i+1 & \text{if } 1 \leq i \leq k \text{ and }i \in S, \\
        k-i & \text{if } 1 \leq i \leq k \text{ and }i \notin S, \\
        0  & \text{if } i >k.
    \end{cases}
    \end{align*}
    Then $v_S:= (v_1, v_2, \dots)$ is a finitely supported nonnegative integral sequence whose entries sum to $m$.

    Now define 
    \begin{align*}
        V_S = \bigl( \underbrace{2k,\ldots,2k}_{v_k},
        \underbrace{2k-2,\ldots,2k-2}_{v_{k-1}},
        \ldots,
        \underbrace{4,\ldots,4}_{v_2},
        \underbrace{2,\ldots,2}_{v_1}
        \bigr).
        \end{align*}
        Equivalently, $V_S$ is the unique weakly decreasing sequence of length $m$ satisfying
        \begin{align*}
            \nu_{2i}(V_S)=v_i
            \quad\text{and}\quad
            \nu_{2i+1}(V_S)=0
            \qquad\text{for all }i\geq 1.
        \end{align*}
            It is immediate from the definition that $p(V_S) = q(V_S) = v_S$ as all the odd level counts are zero. By Lemma~\ref{l: is a level sequence}, $V_S$ is a level sequence as it satisfies~\eqref{eq: what makes level sequence}. Let $z_S := z_{V_S}$ be the fixed-point-free involution in $\ifpf_{2m}$ such that $\omega(\code(z_S))$ has level sequence $V_S$.
    \end{defn}

    We now show that this construction achieves the lower bound $f_m$.

\begin{lem}
\label{l: everything equal}
    For $m \in \ZZ_{>0}$, define $k$ by $\binom{k}{2} \leq m < \binom{k+1}{2}$.  Then for any $S \subseteq [k]$ with $|S| = m - \binom{k}{2}$, we have
    \begin{align*}
        \varepsilon(V_S) = \mathcal{F}(v_S) =  f_m = 2km - 2 \binom{k+1}{3}.
    \end{align*}
\end{lem}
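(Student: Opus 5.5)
The proof splits into three equalities: $\varepsilon(V_S)=\mathcal F(v_S)$, $\mathcal F(v_S)=f_m$, and the closed form for $f_m$.

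The first is immediate. In Definition~\ref{d :zs} we observed $p(V_S)=q(V_S)=v_S$. Lemma~\ref{l: ev = p + q} then gives
\[
\varepsilon(V_S)=\tfrac12\bigl(\mathcal F(v_S)+\mathcal F(v_S)\bigr)=\mathcal F(v_S).
\]

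For $\mathcal F(v_S)=f_m$, we minimize the separable convex function $\mathcal F(v)=\sum_i g_i(v_i)$ with $g_i(x)=x^2+(2i-1)x$, subject to $\sum_i v_i=m$ and $v_i\in\ZZ_{\ge0}$. Each $g_i$ has marginal increments
\[
g_i(x+1)-g_i(x)=2x+2i \qquad (x\ge 0).
\]
Since every $g_i$ is convex, a standard greedy/exchange argument applies. A feasible $v$ is a minimizer if and only if no transfer of one unit from coordinate $j$ to coordinate $i$ lowers $\mathcal F$. Equivalently, $\mathcal F(v)$ is the sum of the $m$ chosen increments, one from each "slot" $(i,x)$ with $x<v_i$, and a minimizer picks the $m$ smallest increments overall (slots in each column are taken in increasing order automatically by convexity).

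The increment of slot $(i,x)$ is $2(x+i)$. Grouping slots by $s=x+i$, there are exactly $s$ slots with increment $2s$. The slots with $s\le k-1$ number $\binom{k}{2}$, and slots with $s=k$ number $k>r$. So the $m$ smallest increments consist of all slots with $x+i\le k-1$, plus any $r$ slots with $x+i=k$. The first set gives $v_i=k-i$ for $i\le k$, and the extra $r$ slots $(i,k-i)$ for $i\in S$ raise $v_i$ to $k-i+1$; this is exactly $v_S$. Hence $\mathcal F(v_S)=f_m$ for every $S$. To keep this self-contained, I would state the exchange argument explicitly: if a feasible $v$ uses some slot with value $2s$ while omitting a slot with value $2s'<2s$, then moving a unit from the column of the last used slot to that of the first unused slot strictly decreases $\mathcal F$, by convexity.

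The closed form comes from summing the chosen increments:
\[
f_m=\sum_{s=1}^{k-1} s\cdot 2s+r\cdot 2k=\frac{(k-1)k(2k-1)}{3}+2k\Bigl(m-\binom{k}{2}\Bigr).
\]
It remains to check that this equals $2km-2\binom{k+1}{3}$, which amounts to
\[
2k\binom{k}{2}-\frac{(k-1)k(2k-1)}{3}=2\binom{k+1}{3}.
\]
The left side is $\frac{k(k-1)}{3}\bigl(3k-(2k-1)\bigr)=\frac{(k-1)k(k+1)}{3}$, which matches. I expect the only real care is needed in making the greedy/slot argument rigorous, including the boundary detail that increments start at $x=0$ and the identification of ties at level $s=k$.
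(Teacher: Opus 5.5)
Your proof is correct, but you minimize $\mathcal F$ by a different route than the paper; the first step, $\varepsilon(V_S)=\mathcal F(v_S)$ from $p(V_S)=q(V_S)=v_S$, is the same.

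\textbf{The paper's route.} It subtracts $2k\sum_i v_i=2km$ and completes the square in each coordinate:
\[
v_i^2+(2i-1-2k)v_i=(v_i-k+i)(v_i-k+i-1)-(k-i)(k-i+1)\quad\text{for } i\le k,
\]
and $v_i(v_i+2(i-k)-1)\ge 0$ for $i>k$. Summing gives $\mathcal F(v)\ge 2km-2\binom{k+1}{3}$. Equality holds exactly when $v_i\in\{k-i,k-i+1\}$ for $i\le k$ and $v_i=0$ for $i>k$, and $v_S$ satisfies this.

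\textbf{Your route.} You take the $m$ smallest marginal increments $2(x+i)$. The rigor you were worried about is immediate. Since $g_i(0)=0$, telescoping gives $\mathcal F(v)=\sum_i\sum_{x<v_i}2(x+i)$, which is a sum over $m$ distinct slots. Any such sum is at least the sum of the $m$ smallest slot values, and $v_S$ visibly attains that minimum. So neither convexity nor the exchange argument is needed for the lemma; they matter only if you want to characterize all minimizers.

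\textbf{Comparison.} The two arguments are dual. The paper's multiplier $2k$ is exactly the threshold increment at which your greedy selection stops: after subtracting $2k$ per unit, slots with $x+i<k$ become negative, slots with $x+i=k$ become zero, and the rest become positive. The paper's version needs this multiplier guessed in advance. In return it yields the closed form and the full equality characterization at once, with no separate summation identity. Your version explains where the triangular-number thresholds defining $k$ come from: there are exactly $s$ slots of value $2s$. It also explains why $S$ is free, since ties occur only at level $s=k$. The cost is the separate check $\sum_{s<k}2s^2+2kr=2km-2\binom{k+1}{3}$, which you carried out correctly.
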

\begin{proof}
    We first show that for a finitely supported nonnegative integral sequence $v = (v_1, v_2, \dots)$ whose entries sum to $m$, $\mathcal{F}(v) \geq 2km - 2 \binom{k+1}{3}$. Since the entries of $v$ sum to $m$,
    \begin{align}
    \label{eq: fv - 2km}
        \mathcal{F}(v) - 2km &= \sum_{i \geq 1}(v_i^2 + (2i-1)v_i) - 2k \sum_{i \geq 1} v_i\\
        &= \sum_{i \geq 1}(v_i^2 + (2i-1 - 2k)v_i). \notag
    \end{align}
    
    If $1 \leq i \leq k$, then 
    \begin{align}
    \label{eq: i <= k}
        v_i^2 + (2i-1 - 2k)v_i = (v_i - k+i)(v_i -k+i-1) - (k-i)(k-i+1).
    \end{align}
    In particular, $(v_i - k+i)(v_i -k+i-1) \geq 0$ with equality if and only if $ v_i = k-i$ or $v_i = k-i+1$.

    If $ k< i$, then 
    \begin{align}
    \label{eq: i > k}
        v_i^2 + (2i-1 -2k)v_i = v_i(v_i + 2(i-k)-1) \geq 0.
    \end{align}
    Since $2(i-k)-1 > 0$ and $v_i \geq 0$, equality holds if and only if $v_i = 0$.

    Therefore, substituting~\eqref{eq: i <= k} and~\eqref{eq: i > k} into~\eqref{eq: fv - 2km} gives
    \begin{align*}
        \mathcal{F}(v) - 2km  \geq  - \sum_{i=1}^k (k-i)(k-i+1) = - 2 \binom{k+1}{3},
    \end{align*}
    with equality if and only if
    \begin{align}
    \label{eq: equality condition for vi}
        \begin{cases}
        v_i = k-i \text{ or } v_i = k-i+1 & \text{for } 1 \leq i \leq k, \text{ and}\\
        v_i = 0 & \text{for } i > k.
    \end{cases}
    \end{align}
    Thus $f_m \geq 2km - 2 \binom{k+1}{3}$. It remains to show that $v_S$ achieves the lower bound. 
    
    Since $v_S = p(V_S) = q(V_S)$, we have $\varepsilon(V_S) = \mathcal{F}(v_S)$. Finally, since $v_S$ satisfies~\eqref{eq: equality condition for vi},
    \begin{align*}
        &\varepsilon(V_S) = \mathcal{F}(v_S) = 2km - 2 \binom{k+1}{3} \leq f_m \leq \mathcal{F}(v_S). \qedhere
    \end{align*}
\end{proof}

\begin{proof}[Proof of Theorem~\ref{t: maximize fpf reg}]
     For any $z \in \ifpf_{2m}$, let $V$ be the level sequence of $\omega(\code(z))$, then
    \begin{align*}
        \reg(X^{\mathrm{SS}}_z) &\leq \reg(X^{\mathrm{SS}}_{z_V}) && (\text{Lemma}~\ref{l: zv maximize CM-Reg})\\
        &= m(m+1) - \varepsilon(V) && (\text{Corollary}~\ref{cor: Cm-reg}\text{ and Lemma}~\ref{l: zv has given level sequence and decreasing LIS})\\
        &\leq m(m+1) - f_m && (\text{Lemma}~\ref{l: ev = p + q})\\
        &= m(m+1)-  2km + 2 \binom{k+1}{3}. && (\text{Lemma}~\ref{l: everything equal})
    \end{align*}
    Since $z$ was arbitrary, 
    \begin{align*}
        \max_{z \in \ifpf_{2m}} \reg(X^{\mathrm{SS}}_z) \leq m(m+1) -  2km + 2 \binom{k+1}{3}.
    \end{align*}

    On the other hand, let $S \subseteq [k]$ be any subset such that $|S| = m -\binom{k}{2}$. By Lemma~\ref{l: everything equal} and Corollary~\ref{cor: Cm-reg},
    \begin{align*}
         \max_{z \in \ifpf_{2m}} \reg(X^{\mathrm{SS}}_z) \geq \reg(X^{\mathrm{SS}}_{z_S}) = m(m+1) - \varepsilon(V_S) = m(m+1)-  2km + 2 \binom{k+1}{3}.
    \end{align*}
    Therefore, combining the two inequalities gives
    \begin{align*}
        \max_{z \in \ifpf_{2m}} \reg(X^{\mathrm{SS}}_z) = m(m+1)-  2km + 2 \binom{k+1}{3} = 2 \cdot \max_{w \in S_m} \reg(X_w),
    \end{align*}
    where the second equality is Theorem~\ref{t: maximal CM-Reg for MSV}.
\end{proof}

\begin{rem}
\label{rem:nonlayered-regularity-maximizers}
Let  $ \mathbf{d}(m):= (2m-2, 2m-4, \cdots, 0)$
be the match code of the longest element in $\ifpf_{2m}$. Then since $V_S$ has only even levels,
\begin{align*}
    \code(z_S) = \mathbf{d}(v_k)\mathbf{d}(v_{k-1}) \cdots \mathbf{d}(v_1),
\end{align*}
so the fixed-point-free involutions $z_S$ constructed in Definition~\ref{d :zs} are layered. However, there exist non-layered fixed-point-free involutions that achieve the maximal Castelnuovo--Mumford regularity in Theorem~\ref{t: maximize fpf reg}.
\end{rem}
\begin{exa}
Let $  z=(1,3)(2,4)(5,8)(6,7)\in\ifpf_8$. Then $\code(z)=(1,0,2,0)$ and $ \omega(\code(z))=(1,3)(2,4)(6,8)(5,7)$ has level sequence $V=(5,4,2,2)$. Therefore, $\sraj(z)=4\cdot5-(5+4+2+2)=7$ and $ \ell_{\fpf}(z)=3$. So by Corollary~\ref{cor: Cm-reg}, $\reg (X_z^{\mathrm{SS}})=4$. This is the maximum in $\ifpf_8$ by Theorem~\ref{t: maximize fpf reg}, but $z$ is not layered.
\end{exa}

\appendix
\section{Degrees of Involution Grothendieck polynomials}
\label{app:involution-grothendieck}

Another family of polynomials that frequently appear together with the symplectic Grothendieck polynomials is the \definition{involution Grothendieck polynomials}. They are indexed by involutions $\mI_n:=\{z\in S_n:z=z^{-1}\}$. The involution Grothendieck polynomials $\invG_z(\x)$ are combinatorial $K$-theoretic analogues of involution
Schubert polynomials~\cite{MP21Principal}*{Definition~4.2}. Unlike the
symplectic Grothendieck polynomials, they have no known general direct
geometric interpretation and are generally different from the \definition{orthogonal
Grothendieck polynomials} representing the K-theory classes of orthogonal orbit closures (see~\cite{MWGrothExp}*{\S2.4}). Regardless, we show that the degrees of involution Grothendieck polynomials agree with those of ordinary Grothendieck polynomials indexed by the same involutions.

Let $\circ$ denote the \definition{Demazure product}, the associative
product on $S_n$ determined by
\[
    w\circ s_i=
    \begin{cases}
        ws_i,&\ell(ws_i)>\ell(w),\\
        w,&\ell(ws_i)<\ell(w).
    \end{cases}
\]
In particular, $s_i\circ s_i=s_i$, and inversion reverses the product:
$(u\circ v)^{-1}=v^{-1}\circ u^{-1}$.
For $z\in\mI_n$, define its set of \definition{involution Hecke atoms} and
its \definition{involution length} by
\[
    \mathcal B_{\inv}(z):=\{w\in S_n:w^{-1}\circ w=z\},
    \qquad
    \ell_{\inv}(z):=\min_{w\in\mathcal B_{\inv}(z)}\ell(w).
\]
The \definition{involution Grothendieck polynomial} of $z$ is
\[
    \invG_z(\x)
    :=\sum_{w\in\mathcal B_{\inv}(z)}
       (-1)^{\ell(w)-\ell_{\inv}(z)}\fG_w(\x).
\]
Its lowest-degree homogeneous component is the involution Schubert polynomial $\invS_z(\x)$, of degree $\ell_{\inv}(z)$.

Let $ \Delta_n:=\{(i,j)\in\ZZ_{>0}^2:i+j\leq n\}$. For a diagram $P\subseteq\Delta_n$, label its cell $(i,j)$ by
$s_{i+j-1}$, and let $\delta(P)$ be the Demazure product of these labels,
reading rows from top to bottom and each row from right to left.
Let
\[
    \PD(w):=\{P\subseteq\Delta_n:\delta(P)=w\}
\]
be the set of pipe dreams of $w$. The row and column
weights $\rwt(P),\cwt(P)\in\ZZ_{\geq0}^n$ record the numbers of cells in
each row and column, respectively. The set $\PD(w)$ combinatorially computes ordinary Grothendieck polynomials,
\begin{equation}
\label{eq:appendix-ordinary-pipe-dream}
    \fG_w(\x)
    =\sum_{P\in\PD(w)}(-1)^{|P|-\ell(w)} \x^{\rwt(P)}.
\end{equation}

For $z\in\mI_n$, its \definition{involution Hecke pipe dreams} are
\[
    \mathcal D^{\inv}(z)
    :=\{D\subseteq\Delta_n:
       D\subseteq\{(i,j):i\geq j\},
       \delta(D)^{-1}\circ\delta(D)=z\}.
\]
This is the Hecke-word definition of the diagrams in
\cite{MP21Principal}*{\S4}. Marberg and Pawlowski proved that
\begin{equation}
\label{eq:appendix-involution-pipe-dream}
    \invG_z(\x)
    =\sum_{D\in\mathcal D^{\inv}(z)}
      (-1)^{|D|-\ell_{\inv}(z)}
      \prod_{(i,i)\in D}x_i
      \prod_{\substack{(i,j)\in D\\i>j}}
      (x_i+x_j-x_ix_j).
\end{equation}
We relate the involution Hecke pipe dreams to pipe dreams in $\PD(z)$ that are symmetric. For a diagram $D$ weakly below the diagonal, let
$D^{\mathrm{sym}}:=D\cup D^{\mathsf T}$ be its \definition{symmetric completion}.

\begin{lem}
\label{lem:appendix-symmetric-completion}
For $D\subseteq\Delta_n$ weakly below the diagonal,
\[
    \delta(D^{\mathrm{sym}})=\delta(D)^{-1}\circ\delta(D).
\]
\end{lem}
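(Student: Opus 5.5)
The plan is to compare Demazure products of \emph{words}, using the fact that the Demazure product is unchanged when commuting letters are swapped. Then I would choose a convenient reading order of $D^{\mathrm{sym}}$ that splits it into an upper-triangular part, a diagonal part and a lower-triangular part.

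\textbf{Step 1 (reading-order flexibility).} Put a partial order on the cells of $\Delta_n$: $(i,j)\preceq(i',j')$ if $i\le i'$ and $j\ge j'$. The reading order used to define $\delta$ (rows top to bottom, each row right to left) is a linear extension of $\preceq$. Suppose two cells are incomparable. Then, after relabelling the two cells, $i<i'$ and $j<j'$, so their labels $s_{i+j-1}$ and $s_{i'+j'-1}$ have indices differing by at least $2$, and hence commute. Since $\circ$ is associative and $s_a\circ s_b=s_as_b=s_b\circ s_a$ for $|a-b|\ge2$, swapping adjacent incomparable cells does not change the Demazure product. Any two linear extensions of a finite poset are connected by such adjacent swaps. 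Therefore for any $P\subseteq\Delta_n$, $\delta(P)$ equals the Demazure product of its labels read along \emph{any} linear extension of $\preceq$ restricted to $P$. I expect this to be the main point needing care; everything after it is bookkeeping.

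\textbf{Step 2 (a split reading order).} Write $L=\{(i,j)\in D:i>j\}$ and $\Delta=\{(i,i)\in D\}$. Write $U=L^{\mathsf T}$, which is strictly above the diagonal. Then $D^{\mathrm{sym}}=U\sqcup\Delta\sqcup L$.

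I would check that reading $U$ first, then $\Delta$, then $L$ (each internally in a linear extension) is a linear extension of $D^{\mathrm{sym}}$. For this it suffices that no cell of $L\cup\Delta$ precedes a cell of $U$, and no cell of $L$ precedes a cell of $\Delta$. For instance, if $(i,j)\preceq(i',j')$ with $i\ge j$ and $i'<j'$, then $i\le i'<j'\le j\le i$, which is a contradiction. The other cases are analogous. The same argument shows that $D=\Delta\sqcup L$ can be read as $\Delta$ then $L$. Thus
\[
\delta(D)=\delta(\Delta)\circ\delta(L),\qquad \delta(D^{\mathrm{sym}})=\delta(U)\circ\delta(\Delta)\circ\delta(L).
\]

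\textbf{Step 3 (inverse and transpose).} Since inversion reverses Demazure products,
\[
\delta(D)^{-1}=\delta(L)^{-1}\circ\delta(\Delta)^{-1}.
\]
The inverse $\delta(L)^{-1}$ is the Demazure product of the reversed word of $L$. Transposition $(i,j)\mapsto(j,i)$ preserves labels and reverses $\preceq$. Hence the reversed word of $L$ is the label word of $U$ read along a linear extension, and by Step~1, $\delta(L)^{-1}=\delta(U)$.

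The diagonal labels $s_{2i-1}$ pairwise commute, so $\delta(\Delta)$ is a product of distinct commuting simple reflections. Consequently $\delta(\Delta)^{-1}=\delta(\Delta)$ and $\delta(\Delta)\circ\delta(\Delta)=\delta(\Delta)$, the latter because $s\circ s=s$ and the factors commute.

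Combining these,
\[
\delta(D)^{-1}\circ\delta(D)=\delta(U)\circ\delta(\Delta)\circ\delta(\Delta)\circ\delta(L)=\delta(U)\circ\delta(\Delta)\circ\delta(L)=\delta(D^{\mathrm{sym}}),
\]
which is the claimed identity.
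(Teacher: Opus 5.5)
Your proof is correct, but it takes a different route from the paper's.

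\textbf{The paper's route.} The paper inducts on $n$. It peels off the first column of $D$, which becomes the first row of $D^{\mathrm{sym}}$, and commutes the first-column labels $s_r$ past the later interior labels. This gives
$\delta(D)=s_1^\epsilon\circ u\circ v$ and $\delta(D^{\mathrm{sym}})=v^{-1}\circ s_1^\epsilon\circ\delta(E^{\mathrm{sym}})\circ v$. It then translates the interior $E$ by $(-1,-1)$, which shifts generator indices by two, applies the inductive hypothesis, and uses that $u$ commutes with $s_1$.

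\textbf{Your route.} You first prove, once and for all, that $\delta(P)$ does not depend on which linear extension of $\preceq$ is used to read $P$. This is a commutation-class (heap) argument, and your check that incomparable cells satisfy $i<i'$, $j<j'$ and so carry commuting labels is exactly what is needed. You then read $D^{\mathrm{sym}}$ as $U$, then the diagonal, then $L$. The identity follows from $\delta(L^{\mathsf T})=\delta(L)^{-1}$ and from the idempotence of the diagonal factor, with no induction and no index shifting.

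\textbf{What each buys.} Your argument is more conceptual and proves more. It shows $\delta(P^{\mathsf T})=\delta(P)^{-1}$ for every $P\subseteq\Delta_n$. The paper uses exactly this fact without comment in the next lemma, when it asserts that $P_z^{\mathsf T}$ is a pipe dream for $z^{-1}$. The paper's argument, in exchange, needs only one specific commutation (first-column labels past later interior cells) rather than the connectivity of linear extensions under adjacent swaps.

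A minor point: your use of $\Delta$ for the diagonal part clashes with the paper's $\Delta_n$.
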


\begin{proof}
We induct on $n$. The base cases  when $n\leq2$ are immediate. Let $  E:=\{(i,j)\in D:i,j\geq2\}$ and $u:=\delta(E)$. Let $\epsilon=1$ if $(1,1)\in D$ and $\epsilon=0$ otherwise. Let
\[
    \{r>1:(r,1)\in D\}=\{r_1<\cdots<r_t\}
    \quad \text{and}\quad  v:=s_{r_1}\circ\cdots\circ s_{r_t}.
\]
For $(r,1)$ in the first column, $s_r$ commutes with every interior label appearing later in the reading word since any later
interior cell $(i,j)$ satisfies $i>r$ and $j\geq2$, so
$i+j-1\geq r+2$. Therefore, we can move the labels in the first column to the
end, preserving their relative order, to obtain
\[
    \delta(D)=s_1^\epsilon\circ u\circ v.
\]
In $D^{\mathrm{sym}}$, the reflected cells from the first column form the first
row and are read in reverse order. So the reading word of the symmetric completion is
\begin{equation}
\label{eq:appendix-completion-factorization}
    \delta(D^{\mathrm{sym}})
    =v^{-1}\circ s_1^\epsilon
       \circ\delta(E^{\mathrm{sym}})\circ v.
\end{equation}

Translating $E$ by $(-1,-1)$ gives a lower-triangular diagram in
$\Delta_{n-2}$. By the induction hypothesis, increasing
all generator indices by two yields
$\delta(E^{\mathrm{sym}})=u^{-1}\circ u$.
Every generator occurring in $u$ has index at least $3$, so it commutes
with $s_1$. Thus
\begin{align*}
    \delta(D)^{-1}\circ\delta(D)
    =v^{-1}\circ u^{-1}\circ s_1^\epsilon
      \circ s_1^\epsilon\circ u\circ v =v^{-1}\circ s_1^\epsilon\circ u^{-1}\circ u\circ v =\delta(D^{\mathrm{sym}}),
\end{align*}
where the last equality is
\eqref{eq:appendix-completion-factorization}.
\end{proof}

Therefore, the set of symmetric completions of the involution Hecke pipe dreams is exactly the set of symmetric pipe dreams for the same involution. We now show that the unique maximal pipe dream introduced in~\cite{psw24} is symmetric.

\begin{lem}
\label{lem:appendix-symmetric-maximal-pipe-dream}
For $z\in\mI_n$, there exists a symmetric pipe dream
$P \in\PD(z)$ such that $|P|=\raj(z)$.
\end{lem}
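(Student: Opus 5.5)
Since $z$ is an involution, transposing a pipe dream gives a bijection $\PD(z)\to\PD(z^{-1})=\PD(z)$: reading the transposed diagram row by row from right to left gives the reverse of a reading word of the original diagram, up to commutations of labels $s_{i+j-1}$ on cells not in the same antidiagonal chain. The plan is to take the unique pipe dream of maximal size in $\PD(z)$ and show that it is fixed by transposition. By Theorem~\ref{t: psw raj}, this maximal size equals $\raj(z)$.

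\textbf{Step 1: the maximal pipe dream is unique.} First we check that~\cite{psw24} gives uniqueness. The top-degree component of $\fG_z$ has the leading monomial $\x^{\rajcode(z)}$ by Theorem~\ref{t: rajcode distinguish}. In~\cite{psw24} there is a unique pipe dream $P_{\max}\in\PD(z)$ with $|P_{\max}|=\raj(z)$ and $\rwt(P_{\max})=\rajcode(z)$. If we only use what is stated here, we instead take $P$ of maximal size with row weight $\rajcode(z)$. Its existence follows because the coefficient of $\x^{\rajcode(z)}$ in \eqref{eq:appendix-ordinary-pipe-dream} is nonzero and all contributing $P$ have $|P|=\raj(z)$ and the same sign.

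\textbf{Step 2: transposition preserves Hecke products.} We verify that $\delta(P^{\mathsf T})=\delta(P)^{-1}$ for every $P\subseteq\Delta_n$. Reading $P^{\mathsf T}$ by rows top to bottom, each row right to left, reads $P$ column by column left to right, each column bottom to top. Two cells in different positions whose relative order changes between these reading orders have labels differing by at least $2$ unless they are adjacent in a hook pattern that is already ordered consistently. This is the standard fact that the row reading and column reading give commutation-equivalent words, applied with reversal. Since reversing a Demazure word inverts the product, $P^{\mathsf T}\in\PD(z^{-1})=\PD(z)$, with $|P^{\mathsf T}|=|P|$ and $\rwt(P^{\mathsf T})=\cwt(P)$.

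\textbf{Step 3: symmetry of the maximizer.} If the maximizer is unique in $\PD(z)$, then $P_{\max}^{\mathsf T}$ is also a maximum-size element of $\PD(z)$, so $P_{\max}^{\mathsf T}=P_{\max}$ and we are done. This uniqueness is the main obstacle. If~\cite{psw24} only gives uniqueness among pipe dreams with row weight $\rajcode(z)$, we additionally need $\cwt(P_{\max})=\rajcode(z)$. That would follow from the symmetric statement $\cwt$ equals the Rajchgot code of $z^{-1}=z$, obtained by applying Theorem~\ref{t: rajcode distinguish} to $\fG_{z^{-1}}$ through the transpose bijection. Concretely, the transpose of the unique maximizer for $z^{-1}$ has column weight $\rajcode(z^{-1})=\rajcode(z)$. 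Matching these two descriptions, perhaps via the explicit construction of $P_{\max}$ in~\cite{psw24} from the Rajchgot code, is where I expect the real work to lie.
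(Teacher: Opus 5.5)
Your strategy (transpose a distinguished maximal pipe dream and invoke a uniqueness statement) is the one the paper uses. However, the uniqueness you rely on is false, and the ingredient that actually closes the argument is exactly the step you leave open.

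Neither uniqueness statement you consider holds, even for involutions. Take $z=15342=(2\,5)$, so $\rajcode(z)=(2,3,1,1,0)$ and $\raj(z)=7$. Consider the diagrams
\[
B=\{(1,2),(1,4),(2,1),(2,2),(2,3),(3,1),(4,1)\},\qquad
B'=\{(1,2),(1,4),(2,1),(2,2),(2,3),(3,2),(4,1)\}.
\]
Their reading words are $s_4s_2s_4s_3s_2s_3s_4$ and $s_4s_2s_4s_3s_2s_4s_4$, and both have Demazure product $15342$. So both lie in $\PD(z)$, both have size $7$, and both have row weight $\rajcode(z)$. But $\cwt(B)=(3,2,1,1,0)$ and $B$ is not symmetric, while $\cwt(B')=(2,3,1,1,0)$ and $(B')^{\mathsf T}=B'$. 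In addition, $B^{\mathsf T}$ is a third maximal element of $\PD(z)$, with row weight $(3,2,1,1,0)$. This has three consequences:
\begin{itemize}
\item The maximizer in $\PD(z)$ is not unique, so the first option in your Step~3 fails.
\item The pipe dream with $\rwt=\rajcode(z)$ is not unique, so the claim you attribute to \cite{psw24} in Step~1 is false. Your coefficient argument only gives existence.
\item A maximizer with the right row weight need not be symmetric, so you cannot pick one and transpose it.
\end{itemize}

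What is needed is the joint statement \cite[Theorems~7.1 and~7.7]{psw24}: for every $w\in S_n$ there is a \emph{unique} $P_w\in\PD(w)$ with $\rwt(P_w)=\rajcode(w)$ \emph{and} $\cwt(P_w)=\rajcode(w^{-1})$. With it, your Step~2 finishes the proof at once. For $z=z^{-1}$, the diagram $P_z^{\mathsf T}\in\PD(z)$ has $\rwt(P_z^{\mathsf T})=\cwt(P_z)=\rajcode(z)$ and $\cwt(P_z^{\mathsf T})=\rwt(P_z)=\rajcode(z^{-1})$. Hence $P_z^{\mathsf T}=P_z$, and $|P_z|=\raj(z)$. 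In the example above, $P_z=B'$.

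You correctly sense that the column weight must be controlled, but you leave it as ``where the real work lies'' without proving or citing it. As written, the argument is therefore incomplete at its key step.

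Your Step~2 itself is fine. The clean justification is that the row reading of $P$ and the reverse of the reading of $P^{\mathsf T}$ are both linear extensions of the partial order in which $(i,j)$ precedes $(i',j')$ whenever $i\le i'$ and $j\ge j'$. Incomparable cells lie strictly northwest/southeast of one another, so their labels differ by at least $2$ and commute; hence $\delta(P^{\mathsf T})=\delta(P)^{-1}$.
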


\begin{proof}
Pechenik, Speyer, and Weigandt~\cite{psw24}*{Theorems~7.1 and~7.7}
showed that, for every $z\in S_n$, there exists a unique pipe dream
$P_z\in\PD(z)$ satisfying
\[
    \rwt(P_z)=\rajcode(z) \quad \text{and}\quad 
    \cwt(P_z)=\rajcode(z^{-1}).
\]
Since $z = z^{-1}$, the transpose $P_z^{\mathsf T}$ satisfies
\begin{align*}
    &P_z^{\mathsf T} \in \PD(z^{-1}) = \PD(z),\\
    &\rwt(P_z^{\mathsf T}) = \rajcode(z^{-1})= \rajcode(z), \text{ and}\\
    &\cwt(P_z^{\mathsf T}) = \rajcode(z)= \rajcode(z^{-1}).
\end{align*}
Therefore, by uniqueness, $P_z^{\mathsf T} = P_z$. Since $\rwt(P_z) = \rajcode(z)$, $|P_z| = \raj(z)$.
\end{proof}

We call $P_z$ the unique maximal pipe dream of $z$. There is an explicit construction of $P_z$ given by the author and Yu~\cite{cy24} for arbitrary permutations.

\begin{thm}
\label{thm:appendix-involution-degree}
For $z\in\mI_n$,
\[
    \deg\invG_z(\x)=\deg\fG_z(\x)=\raj(z).
\]
\end{thm}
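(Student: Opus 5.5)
The plan is to use the pipe dream formula \eqref{eq:appendix-involution-pipe-dream} together with the symmetric maximal pipe dream of Lemma~\ref{lem:appendix-symmetric-maximal-pipe-dream}. We prove an upper bound and a matching lower bound, and we check that the top-degree part does not cancel. The second equality $\deg\fG_z(\x)=\raj(z)$ is Theorem~\ref{t: psw raj}, so only $\deg\invG_z(\x)=\raj(z)$ needs work.

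\emph{Degree of each term.} For $D\in\mathcal D^{\inv}(z)$, the term indexed by $D$ in \eqref{eq:appendix-involution-pipe-dream} has degree $d(D):=|\{(i,i)\in D\}|+2|\{(i,j)\in D: i>j\}|$. This is exactly $|D^{\mathrm{sym}}|$, because diagonal cells are counted once and off-diagonal cells twice. By Lemma~\ref{lem:appendix-symmetric-completion}, $\delta(D^{\mathrm{sym}})=\delta(D)^{-1}\circ\delta(D)=z$, so $D^{\mathrm{sym}}\in\PD(z)$.

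\emph{Upper bound.} In \eqref{eq:appendix-ordinary-pipe-dream}, every $P\in\PD(z)$ contributes a monomial of degree $|P|$ with sign $(-1)^{|P|-\ell(z)}$. Terms of a fixed degree therefore all carry the same sign and cannot cancel. Hence $\deg\fG_z=\max_{P\in\PD(z)}|P|$, and Theorem~\ref{t: psw raj} gives $|P|\le\raj(z)$ for all $P\in\PD(z)$. Combining this with the previous step, every term of $\invG_z$ has degree $d(D)=|D^{\mathrm{sym}}|\le\raj(z)$.

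\emph{Lower bound and non-cancellation.} Take the symmetric $P_z$ from Lemma~\ref{lem:appendix-symmetric-maximal-pipe-dream}, and let $D$ be its part weakly below the diagonal. Then $D^{\mathrm{sym}}=P_z$, and $\delta(D)^{-1}\circ\delta(D)=\delta(P_z)=z$ by Lemma~\ref{lem:appendix-symmetric-completion}, so $D\in\mathcal D^{\inv}(z)$ and $d(D)=\raj(z)$. The degree-$\raj(z)$ part of the term for any such $D$ is $(-1)^{|D|-\ell_{\inv}(z)}\prod_{(i,i)\in D}x_i\prod_{i>j}(-x_ix_j)$, whose sign is
\[
(-1)^{|D|-\ell_{\inv}(z)+\#\{\text{off-diagonal cells of }D\}}=(-1)^{\#\text{diag}(D)+2\#\text{off}(D)-\ell_{\inv}(z)}=(-1)^{\raj(z)-\ell_{\inv}(z)}.
\]
This sign depends only on $z$. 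So all contributions in degree $\raj(z)$ have the same sign, the component is a nonzero signed sum of monomials, and $\deg\invG_z(\x)=\raj(z)$.

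\emph{Main obstacle.} The delicate step is the sign bookkeeping in the top degree. Off-diagonal factors $x_i+x_j-x_ix_j$ contribute $-x_ix_j$, and diagonal factors contribute $+x_i$. The parity identity $\#\text{diag}+2\#\text{off}=d(D)$ is what makes the overall sign uniform, so I would check it carefully. I would also double-check that Lemma~\ref{lem:appendix-symmetric-completion} applies with $D\subseteq\Delta_n$, since restricting the symmetric $P_z\subseteq\Delta_n$ to cells with $i\ge j$ stays inside $\Delta_n$.
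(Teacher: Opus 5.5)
Your proof is correct and follows the same route as the paper. Each term of \eqref{eq:appendix-involution-pipe-dream} has degree $|D^{\mathrm{sym}}|$ with $D^{\mathrm{sym}}\in\PD(z)$ by Lemma~\ref{lem:appendix-symmetric-completion}, and the symmetric maximum is sandwiched between $|P_z|=\raj(z)$ (Lemma~\ref{lem:appendix-symmetric-maximal-pipe-dream}) and $\max_{P\in\PD(z)}|P|=\deg\fG_z(\x)=\raj(z)$; your check that all top-degree contributions carry the uniform sign $(-1)^{\raj(z)-\ell_{\inv}(z)}$ is a useful addition, since the paper asserts $\deg\invG_z(\x)=\max_{D}|D^{\mathrm{sym}}|$ without spelling out this non-cancellation.
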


\begin{proof}
For $D\in\mathcal D^{\inv}(z)$, let $d(D)$ and $o(D)$ be its numbers
of diagonal and strictly off-diagonal cells, so  $|D^{\mathrm{sym}}| = d(D) + 2 o(D)$. Then by~\eqref{eq:appendix-involution-pipe-dream} and 
Lemma~\ref{lem:appendix-symmetric-completion},
\[
    \deg\invG_z(\x)
    =\max_{D\in\mathcal D^{\inv}(z)}|D^{\mathrm{sym}}|
    =\max_{\substack{P\in\PD(z)\\P=P^{\mathsf T}}}|P|.
\]
Let $P_z$ be the unique maximal pipe dream of $z$. By Lemma~\ref{lem:appendix-symmetric-maximal-pipe-dream},~\eqref{eq:appendix-ordinary-pipe-dream}, and Theorem~\ref{t: psw raj},
\begin{align*}
     \raj(z) = |P_z| \leq \max_{\substack{P\in\PD(z)\\P=P^{\mathsf T}}}|P| \leq \max_{P\in\PD(z)}|P| = \deg\fG_z(\x) = \raj(z),
\end{align*}
and the statement follows.
\end{proof}

\bibliographystyle{alpha}
\bibliography{citation}
\end{document}